\title{Higher geometric sheaf theories}
\author{Raffael Stenzel
}
\renewcommand\footnotemark{}
\def\defthm#1#2#3#4{
  \newtheorem{#1}[theorem]{#3}
  \newtheorem*{#1*}{#3}
  \newtheorem{#2}[theorem]{#4}
  \newtheorem*{#2*}{#4}
  \crefname{#1}{#3}{#4}
  \crefname{#2}{#4}{#4}  
}
\newtheoremstyle{mythm}%
{10pt}
{}
{\itshape}
{}
{\bf}
{.}
{.5em}
{}%
\newtheoremstyle{mydef}%
{10pt}
{3pt}
{}
{}
{\bf}
{.}
{.5em}
{}%
\newtheoremstyle{myrmk}%
{10pt}
{3pt}
{}
{}
{\bf}
{.}
{.5em}
{}%
\theoremstyle{mythm}
\newtheorem{theorem}{Theorem}[section]
\newtheorem*{theorem*}{Theorem}
\theoremstyle{mydef}
\theoremstyle{myrmk}
\newtheorem*{replemmax}{\reptitle}
 {\end{replemmax}}
\newtheorem*{repthmx}{\reptitle}
 {\end{repthmx}}
\newtheorem*{repcorx}{\reptitle}
 {\end{repcorx}}
\crefname{section}{Section}{Sections}
\crefname{theorem}{Theorem}{Theorems}
\renewenvironment{proof}[1][\proofname] {\par\pushQED{\qed}\normalfont\topsep6\p@\@plus6\p@\relax\trivlist\item[\hskip\labelsep\bf#1\@addpunct{.}]\ignorespaces}{\popQED\endtrivlist\@endpefalse}
\newcommand{\sprime}{^{\prime}}
\newcommand{\pbs}{\scalebox{1.5}{\rlap{$\cdot$}$\lrcorner$}}
\newcommand{\adj}{\rotatebox[origin=c]{180}{$\vdash$}\hspace{0.1pc}}
\newcommand{\Address}{{
  \bigskip
  \footnotesize

\textsc{Max Planck Institute for Mathematics, Vivatsgasse 7, 53111 Bonn, Germany}\par\nopagebreak
  \textit{E-mail address}: \texttt{stenzel@mpim-bonn.mpg.de}

}}
\begin{document}
\maketitle

\abstract{
We introduce the notion of a higher covering diagram in a base $\infty$-category $\mathcal{C}$. 
The theory of higher covering diagrams in $\mathcal{C}$ will be shown to recover various 
descent conditions known from the $\infty$-categorical literature in a uniform manner. In fact, higher covering 
diagrams always assemble to what we refer to as a structured colimit
pre-topology on the base $\mathcal{C}$. It hence always defines a sub-canonical sheaf theory over $\mathcal{C}$, and 
indeed defines the canonical such whenever $\mathcal{C}$ has pullbacks.
This ``higher geometric'' sheaf theory will be shown to differ from the usual infinitary-coherent sheaf theory by a cotopological localization whenever $\mathcal{C}$ is infinitary-coherent itself. We prove that 
this localization is generally non-trivial. For instance, every $\infty$-topos is the theory of 
higher geometric sheaves over itself, but the according infinitary-coherent sheaf theory over it is generally 
strictly larger. The higher geometric sheaves are hence characterized by a 
limit preservation property that is generally not captured by the classical 
sheaf condition. 
We define an $\infty$-category of higher geometric $\infty$-categories, and 
show that the (opposite of the) $\infty$-category of $\infty$-toposes embeds fully faithfully therein. We show that 
the higher $\kappa$-geometric sheaf theory on a higher $\kappa$-geometric $\infty$-category defines the free
$\infty$-topos generated by it, and consequently that it faithfully generalizes Lurie's definition of a ``sheaf'' 
over an $\infty$-topos.
}

\section{Introduction}\label{secintro}

\begin{notation*}
As is often custom, the prefix ``$(\infty,1)$'' will be abbreviated to ``$\infty$'' throughout this paper. The
$\infty$-category of spaces will be denoted by $\mathcal{S}$, the $\infty$-category of functors between two
$\infty$-categories $\mathcal{C}$, $\mathcal{D}$  will be denoted by $\mathrm{Fun}(\mathcal{C},\mathcal{D})$ or by
$\mathcal{D}^{\mathcal{C}}$ depending on the context. The $\infty$-category of presheaves
$\mathrm{Fun}(\mathcal{C}^{op},\mathcal{S})$ over a small $\infty$-category $\mathcal{C}$ will be denoted by
$\hat{\mathcal{C}}$. The functor $\mathsf{C}\colon\mathcal{D}\rightarrow\mathrm{Fun}(\mathcal{C},\mathcal{D})$ will 
denote the pre-composition with the functor $\mathcal{C}\rightarrow\Delta^0$; in particular,
$\mathsf{C}(B)\colon\mathcal{C}\rightarrow\mathcal{D}$ denotes the constant diagram with value $B\in\mathcal{D}$.

To avoid terminological confusion, we will refer to the $\infty$-categories and sheaf theories that this paper is 
about as \emph{higher} geometric rather than just geometric or $\infty$-geometric. The terminology is justified by 
the totality of the results in this paper. For the sake of further evident distinction, we will use exclusively the 
term ``$\kappa$-coherent'' to refer to the notion often synonymously called ``$\kappa$-coherent'',
``$\kappa$-geometric'' or ``$\kappa$-ary regular'' in the literature.
\end{notation*}

\paragraph{A motivation}
The central notion of a higher covering diagram as to be introduced in this paper may appear rather technical on 
first sight. To outline the idea unobscured by a machinery of formal constructions, we recall that sheaves on a 
topological space 
$X$ are by definition sheaves on the frame $\mathcal{O}(X)$ of open sets equipped with its canonical Grothendieck 
topology \cite{johnstone_ss, mlmsheaves}. Generally, the canonical topology on any frame $\mathcal{C}$ is generated 
by covers of the form $(U_i\leq B\mid i\in I)$ in $\mathcal{C}$ where $I$ is a set such that $\bigcup_{i\in I}U_i=B$. 
In other words, a cover of an object $B$ is determined by a set-indexed diagram
$U_{\bullet}\colon I\rightarrow \mathcal{C}_{/B}$ that is jointly isomorphic to $B$ (or, equivalently, jointly 
effective epimorphic over $B$).

More generally, every ordinary category $\mathcal{C}$ exhibits a canonical Grothendieck topology; that 
is, a largest Grothendieck topology $J$ on $\mathcal{C}$ such that all representable presheaves over $\mathcal{C}$ 
are $J$-sheaves. As stated in \cite[Section C2.1]{elephant}, this topology consists exactly of those sieves which are 
universally effective epimorphic. Here, a sieve on an object $B\in\mathcal{C}$ is a $(-1)$-truncated discrete 
fibration $S\hookrightarrow\mathcal{C}_{/B}$. A sieve $S\hookrightarrow\mathcal{C}_{/B}$ is effective epimorphic if 
it is colimiting when considered as a cocone over $B$. A sieve $S\hookrightarrow\mathcal{C}_{/B}$ is universally 
effective epimorphic if for all $f\colon C\rightarrow B$ the sieve $f^{\ast}S\hookrightarrow\mathcal{C}_{/C}$ 
obtained by base change is effective epimorphic.

We further recall that an ordinary category $\mathcal{C}$ is said to be $\kappa$-coherent if it is regular and its 
subobject-posets $\mathrm{Sub}(B)$ for $B\in\mathcal{C}$ have pullback-stable $\kappa$-small unions
\cite[Definition 5.10]{shulmanexact}. \footnote{At times $\mathcal{C}$ is further required to be well-powered, see 
e.g.\ \cite[Section 1.4]{elephant}.}
A category $\mathcal{C}$ is infinitary-coherent if it is $\kappa$-coherent for all cardinals $\kappa$
(that is, if it is regular and $\mathrm{Sub}\colon\mathcal{C}^{op}\rightarrow\mathrm{Cat}$ factors through the 
category of frames). In such $\mathcal{C}$, all effective epimorphic sieves are universally effective epimorphic.
For the rest of this motivation, we will ignore matters of size for the sake of convenience, and in particular use 
the word ``coherent'' as an umbrella term for ``regular'', ``coherent'', ``$\kappa$-coherent'' and
``infinitary-coherent''. Given an ordinary coherent category $\mathcal{C}$, the coherent Grothendieck topology at an 
object $B\in\mathcal{C}$ is generated by the jointly effective epimorphic (set-indexed) families
$U=\{U_i\rightarrow B\mid i\in I\}$. That means, a sieve $S\hookrightarrow\mathcal{C}_{/B}$ is covering if it 
contains all maps $f\colon C\rightarrow B$ that factor through one of the components of some jointly effective 
epimorphic family $U$ over $B$.
\begin{align*}
\xymatrix{
 & U_i\ar[d] \\
C\ar@{-->}@/^/[ur]^{\exists ?}\ar[r]_f  & B
}
\end{align*}
The property of a sieve to be covering does not record the explicit factorizations 
themselves but rather the mere existence of such. In particular, a covering sieve does not have the space to 
distinguish between different such factorizations. This justifies the discrete indexing of the generating covering families in the first 
place, as any further structure would be forgotten by virtue of the eventual propositional truncation anyway. If one 
is to record the factorizations explicitly however, the diagrammatic shape of $U\colon I\rightarrow\mathcal{C}_{/B}$ 
does become relevant for the higher homotopical structure of the associated presheaf
$(\mathcal{C}_{/B})^{op}\rightarrow\mathcal{S}$ that maps an arrow $f\in\mathcal{C}_{/B}$ to the space of such 
factorizations. With this in mind, we are motivated to study ``structured'' topologies $T$ on $\infty$-categories
$\mathcal{C}$ whose objects can be presented by not-necessarily $(-1)$-truncated but still colimiting right 
fibrations $U\colon I\twoheadrightarrow\mathcal{C}_{/B}$.
In fact, keeping in mind that any Grothendieck pre-topology can be completed to a Grothendieck topology, we will 
rather define structured pre-topologies $T$ which do not necessarily consist of fibrations per se. The according 
sheaves, defined as the presheaves local for the class
\begin{align}\label{coverintro}
\mathrm{Cov}_T=\{\mathrm{colim}yU\rightarrow yB\mid U\in T\},
\end{align}
are exactly those presheaves over $\mathcal{C}$ which take the chosen colimits to limits.
The objects of a cover $\mathrm{colim}yU\rightarrow yB$ in $\mathrm{Cov}_T$ over an arrow $f\colon C\rightarrow B$ is 
an explicit lift of $f$ to a component $U_i$. Given two such lifts $u\colon C\rightarrow U_i$,
$v\colon C\rightarrow U_j$, the space of identifications between $u$ and $v$ is computed as the equalizer of the two 
maps in $\hat{\mathcal{C}}_{/yC}$, which a priori can be of any homotopy type. 

Clearly not every class $T$ of colimiting diagrams in an $\infty$-category $\mathcal{C}$ is of such form that
the $\infty$-category $\mathrm{Sh}_T(\mathcal{C})$ of $T$-sheaves is an $\infty$-topos.
To briefly exemplify this, let $\mathcal{C}$ be a presentable $\infty$-category. We then may consider the 
class $T$ of all small diagrams in $\mathcal{C}$. Then the Yoneda embedding induces an equivalence $y\colon\mathcal{C}\rightarrow\mathrm{Sh}_T(\mathcal{C})$. In particular, the sheaf 
theory $\mathrm{Sh}_T(\mathcal{C})$ is an $\infty$-topos if and only if $\mathcal{C}$ was an $\infty$-topos in the 
first place. This in fact is exactly the definition of the $\infty$-category of sheaves on an $\infty$-topos in 
\cite{luriehtt}. This means however that $\mathrm{Sh}_T(\mathcal{C})$ cannot be an $\infty$-topos if $\mathcal{C}$ 
is, for instance, the frame $\mathcal{O}(X)$ of opens on a topological space $X$.

We therefore define the notion of a structured colimit pre-topology $T$ on a base
$\infty$-category $\mathcal{C}$, and proceed to construct a structured colimit pre-topology
$\mathrm{Geo}(\mathcal{C})$ of higher covering diagrams in $\mathcal{C}$ which is provably the largest such 
whenever $\mathcal{C}$ has pullbacks.
%
Its associated sheaf theory $\mathrm{Sh}_{\mathrm{Geo}}(\mathcal{C})$ of higher geometric sheaves will be shown 
to be the canonical sheaf theory over $\mathcal{C}$ accordingly.
The key fact that the localization $\hat{\mathcal{C}}\rightarrow\mathrm{Sh}_{\mathrm{Geo}}(\mathcal{C})$ is left 
exact is achieved by requiring that the property of being ``higher covering'' 
is stable under base change as well as under the construction of higher parametrized path-objects. The latter is to 
say that a higher covering diagram $U\colon I\rightarrow\mathcal{C}_{/B}$ not only covers $B$ itself -- in the sense 
that it is colimiting over $B$ -- but that it also covers all parametrized path-objects $U_i\times_B U_j$ of $B$ in 
canonical fashion (as well as their higher path-objects in turn, see Definition~\ref{defcovfunctors}). Universally 
jointly effective epimorphic families then correspond exactly to those higher covering diagrams which cover the
path-objects $U_i\times_B U_j$ trivially, see Section~\ref{secregtop}. The left exact localization
$\hat{\mathcal{C}}\rightarrow\mathrm{Sh}_{\mathrm{Geo}}(\mathcal{C})$ will generally not be topological, and hence 
not be given by a Grothendieck topology on $\mathcal{C}$. 
Rather, its topological fragment will be generated exactly by the universally jointly effective epimorphic families 
(whenever $\mathcal{C}$ is coherent). In this sense, the higher geometric sheaf theories yield a 
counterpart to the set-valued coherent sheaves on 1-categories. This raises the question whether these 
sheaf theories may be shown to arise as classifying $\infty$-toposes of something that may be referred to as 
geometric homotopy type theories in the future.

\paragraph{Summary of results}
In Section~\ref{secbases}, we recall the notion of a modulator from \cite{as_soa}, state a few useful lemmata about 
them, and further recall a diagonal criterion for modulators to generate an $\infty$-topos by way of localization 
from \cite{abfjsheavesI}. Section~\ref{secdesc} introduces the central notions of this paper. Here, we discuss
semi-descent and descent diagrams in general $\infty$-categories (Section~\ref{secsubdesc}), and use those to define 
and study structured colimit pre-topologies (Section~\ref{secsubstrcolimtop}) as well as higher covering diagrams 
(Section~\ref{secsubinftydesc} and Section~\ref{secsubhcd}). We show that the class of higher covering diagrams 
defines the canonical sheaf theory over any $\infty$-category with pullbacks up to a cardinality caveat
(Theorem~\ref{prophcdmaxtop}), and 
discuss examples. We furthermore point out an interplay between higher covering diagrams and
descent diagrams (Remark~\ref{remdeschcdequiv}), which shows that the two notions are closely related.  
In Section~\ref{secexttop} and Section~\ref{secregtop}, we show that various classic doctrines of category theory 
and their sheaf theories arise as special cases of higher geometric $\infty$-categories and their sheaf theories as 
to be introduced in Section~\ref{secsites}.
In Section~\ref{secexttop}, we show that extensivity of an $\infty$-category equates to the condition that all 
(finite) discrete diagrams factor through a higher covering diagram (Corollary~\ref{corextdescalt}). We show that the 
extensive sheaf theory over an extensive $\infty$-category is hence generated by the higher covering diagrams indexed 
by a (finite) set (Lemma~\ref{lemmaextsheaveschar}). In particular, the according localization is topological and 
sub-canonical. Furthermore, we show that in the finite case the resulting $\infty$-topos is 
hypercomplete (Corollary~\ref{corexthyper}), and hence has enough points whenever $\mathcal{C}$ is lextensive 
(Corollary~\ref{corextpoints}).
In Section~\ref{secregtop}, we show that an $\infty$-category $\mathcal{C}$ with pullbacks is $\kappa$-coherent
(or regular as a special case) if and only if all according symmetric \v{C}ech nerve diagrams in $\mathcal{C}$ factor 
through a higher covering diagram (Theorem~\ref{corcharordgeocat}). We show that the $\kappa$-coherent sheaf theory 
over a $\kappa$-coherent $\infty$-category is hence generated by all higher covering diagrams indexed by the sorted 
Lawvere theories for $\kappa$-small set-sized collections of objects (after removing the terminal object), see 
Theorem~\ref{thmocdsheaves} and Remark~\ref{remocdlvthy}. 
In particular, the according localization is topological and sub-canonical. Furthermore, we show that the resulting
$\infty$-topos is generally not hypercomplete for any $\kappa$, and that it hence generally does not have enough 
points (Proposition~\ref{propregpts}).
In Section~\ref{seccohtop}, we show that the higher $\kappa$-geometric sheaf theory over a $\kappa$-coherent
$\infty$-category $\mathcal{C}$ is a cotopological localization of the $\kappa$-coherent sheaf theory 
whenever $\kappa$ is uncountable (Proposition~\ref{propcohcotoptopfac}).
We show that every $\infty$-topos is the $\infty$-topos of higher geometric sheaves over itself 
(Theorem~\ref{remcovtoposesexple}), and that the infinitary-coherent sheaf theory over an $\infty$-topos is generally 
strictly larger (Proposition~\ref{propcovnotequcoh}). In particular, the $\infty$-category of higher geometric 
sheaves is generally not hypercomplete either (Corollary~\ref{corcovhyper}).
In Section~\ref{secsites} we define the $\infty$-category of higher $\kappa$-geometric $\infty$-categories. We show 
that the (opposite of the) $\infty$-category of $\infty$-toposes embeds fully faithfully in the $\infty$-category of 
higher geometric $\infty$-categories (Proposition~\ref{propgeofunctors}), and that the higher $\kappa$-geometric 
sheaf theory $\mathcal{C}\rightarrow\mathrm{Sh}_{\mathrm{Geo}_{\kappa}}(\mathcal{C})$ over a small higher
$\kappa$-geometric $\infty$-category $\mathcal{C}$ is the free $\infty$-topos generated by $\mathcal{C}$ 
(Corollary~\ref{propfreegeotop}). Section~\ref{secappcof} is a short appendix on cofinality and cofinal equivalence 
that will be of relevance for the constructions in Section~\ref{secdesc}.

\begin{acknowledgments*}
The first version of this paper was written with the gratefully acknowledged support of the Grant Agency of the Czech 
Republic under the grant 19-00902S. The second version of this paper was written as a guest at the Max Planck Institute for Mathematics 
in Bonn, Germany, whose hospitality is greatly appreciated. This second version has considerably benefited 
from discussions with Mathieu Anel, as well as from helpful comments of an anonymous referee. The author 
also would like to thank Nathanael Arkor, John Bourke, Jonas Frey and Nima Rasekh for much appreciated comments and 
conversations.
\end{acknowledgments*}

\section{Modulators and localizations}\label{secbases}

We recall the notion of a modulator from \cite{as_soa} applied to our basic case of interest. That is, we 
fix a small $\infty$-category $\mathcal{C}$ and consider modulators for the $\infty$-category $\hat{\mathcal{C}}$ 
locally presented by the representables on $\mathcal{C}$.

A \emph{modulator} $M=\{M(B)\mid B\in\mathcal{C}\}$ on $\mathcal{C}$ is a collection of sets of objects
$M(B)\subset\hat{\mathcal{C}}_{/yB}$ such that each $M(B)$ contains the identity $1_{yB}\in\hat{\mathcal{C}}_{/yB}$, 
and such that the canonical inclusion 
\[\xymatrix{
M\ar@/_/[dr]\ar@{^(->}[r] & \hat{\mathcal{C}}_{/y}\ar@{->>}[d]\ar@{}[dr]|(.3){\pbs}\ar[r] & \hat{\mathcal{C}}^{\Delta^1}\ar@{->>}[d]^t \\
 & \mathcal{C}\ar[r]_y & \hat{\mathcal{C}}
}\] 
defines a full subfibration $M\twoheadrightarrow\mathcal{C}$ of the pullback
$y^{\ast}t\colon\hat{\mathcal{C}}_{/y}\twoheadrightarrow\mathcal{C}$.
A modulator $M$ on $\mathcal{C}$ is fiberwise left exact if each fiber $M(B)\subset\hat{\mathcal{C}}_{/yB}$ is closed 
under finite limits.

We recall that a class $K\subseteq\hat{\mathcal{C}}^{\Delta^1}$ of arrows is saturated if it contains all 
equivalences, and is both closed under composition and colimits. Every class $K\subseteq\hat{\mathcal{C}}^{\Delta^1}$ 
of arrows is contained in a smallest saturated class $\mathrm{Sat}(K)$ which will be referred to as the saturation of 
$K$. By \cite[Section 3.3]{as_soa}, the saturation $\mathrm{Sat}(M)$ of a modulator $M$ on $\hat{\mathcal{C}}$ is 
always closed under base change. A saturated class $K\subseteq\hat{\mathcal{C}}^{\Delta^1}$ of arrows is strongly 
saturated if it satisfies the 2-out-of-3 property \cite[Definition 2.2.4]{abfjsheavesI}. Every class 
$K\subseteq\hat{\mathcal{C}}^{\Delta^1}$ of arrows is contained in a smallest strongly saturated class which will be 
referred to as the strong saturation of $K$ \cite[Definition 2.2.6]{abfjsheavesI}. The following theorem is 
essentially \cite[Theorem 4.1.9]{abfjsheavesI}.

\begin{theorem}\label{thmlexlocgen}
Let $M$ be a modulator on $\mathcal{C}$ such that the set
\[\Delta(M):=\{\Delta(m)\colon X\rightarrow X\times_{yB}X\mid B\in\mathcal{C}, (m\colon X\rightarrow yB)\in M(B)\}\] 
of diagonals of maps in $M$ is contained in $\mathrm{Sat}(M)$. Then the accessible reflective localization
$\hat{\mathcal{C}}\rightarrow\hat{\mathcal{C}}[M^{-1}]$
is left exact.
\end{theorem}
\begin{proof}
Under the given assumption, \cite[Theorem 4.1.9]{abfjsheavesI} states that $\mathrm{Sat}(M)$ is 
left exact when considered as a full sub-$\infty$-category of $\hat{\mathcal{C}}^{\Delta^1}$. It follows that the 
saturation of $M$ coincides with the strong saturation of $M$. In 
particular, the latter is stable under base change. Thus, the localization
$\hat{\mathcal{C}}\rightarrow\hat{\mathcal{C}}[M^{-1}]=\hat{\mathcal{C}}[\mathrm{Sat}(M)^{-1}]$ is left exact by 
\cite[Theorem 4.2.10]{abfjsheavesI}. 
\end{proof}

\begin{notation*} 
All localizations considered in this paper are automatically reflective. Accordingly, throughout the rest of the 
paper, the term ``localization'' will implicitly refer to ``reflective localization''.
\end{notation*}

In the coming sections we will be interested in modulators of the form
\[\mathrm{Cov}_T(B)=\{\mathrm{colim}yU\rightarrow yB\mid U\in T(B)\}\]
for $B\in\mathcal{C}$ and suitable sets $T(B)$ of colimiting cocones $U\colon I\rightarrow\mathcal{C}_{/B}$. We 
first note that every modulator is of the form $\mathrm{Cov}_T$ for some set $T$ of diagrams.

\begin{lemma}\label{lemmastcolim}
Suppose $M=\{M(B)\mid B\in\mathcal{C}\}$ is a collection of classes of objects $M(B)\subset\hat{\mathcal{C}}_{/yB}$. 
For $B\in\mathcal{C}$ let
\[\mathrm{Un}[M](B):=\{\mathrm{Un}(m)\twoheadrightarrow\mathcal{C}_{/B}\mid m\in M(B)\}\]
be the class of right fibrations obtained via Unstraightening under the equivalence
$\hat{\mathcal{C}}_{/yB}\simeq\widehat{\mathcal{C}_{/B}}$. Then $M=\mathrm{Cov}_{\mathrm{Un}[M]}$.
\end{lemma}
\begin{proof}
We are to construct an equivalence
\[\mathrm{colim}(\mathrm{Un}(X)\twoheadrightarrow\mathcal{C}\xrightarrow{y}\hat{\mathcal{C}})\simeq X\] 
for all $\infty$-categories $\mathcal{C}$ and all presheaves $X\colon\mathcal{C}^{op}\rightarrow\mathcal{S}$. This 
will then in particular apply to the $\infty$-category $\mathcal{C}_{/B}$ and the presheaf
$m\in\widehat{\mathcal{C}_{/B}}$. 
Therefore, recall that the Unstraightening of 
$X\colon\mathcal{C}^{op}\rightarrow\mathcal{S}$ can be computed as the pullback of the universal
right fibration $\pi\colon\mathcal{S}_{\ast}^{op}\rightarrow\mathcal{S}^{op}$ along $X^{op}$
\cite[Section 3.3.2]{luriehtt}. We may thus consider the diagram
\[\xymatrix{
\mathcal{C}_{/X}\ar@{->>}[d]\ar[r]\ar@{}[dr]|(.3){\pbs} & \hat{\mathcal{C}}_{/X}\ar@{->>}[d]\ar[r]\ar@{}[dr]|(.3){\pbs} & \mathcal{S}_{\ast}^{op}\ar@{->>}[d]^{\pi} \\
\mathcal{C}\ar[r]_y & \hat{\mathcal{C}}\ar[r]_{yX} & \mathcal{S}^{op}
}\]
of right fibrations. Here, $\mathcal{S}$ is the $\infty$-category of large spaces (so $yX$ is $\mathcal{S}$-valued). 
The right hand side square is cartesian via \cite[Lemma 5.1.5.2]{luriehtt} and the fact that the Yoneda 
embedding is fully faithful. The vertical right fibration on the left hand side is the $\infty$-category
of elements of $X$ and defined so that the left hand side square is cartesian. The composition of the vertical two 
functors on the bottom is equivalent to $X^{op}$ itself by the Yoneda lemma. It follows that
$\mathrm{Un}(X)\simeq\mathcal{C}_{/X}$ over $\mathcal{C}$. In particular, it follows that
\[\mathrm{colim}(\mathrm{Un}(X)\twoheadrightarrow\mathcal{C}\xrightarrow{y}\hat{\mathcal{C}})\simeq\mathrm{colim}(\mathcal{C}_{/X}\twoheadrightarrow\mathcal{C}\rightarrow\hat{\mathcal{C}}).\]
The latter colimit returns $X$ precisely because the Yoneda embedding left Kan extends to the identity along itself 
\cite[Lemma 5.1.5.3]{luriehtt}. 
\end{proof}

Furthermore, recall that every accessible left exact localization of an $\infty$-topos factors through an essentially 
unique topological localization followed by a cotopological localization
\cite[Proposition 6.5.2.19, Remark 6.5.2.20]{luriehtt}. In order to understand the topological part of a left exact 
localization generated by some modulator $M$, it is useful to understand the associated Grothendieck topology in 
terms of the modulator $M$. Let us first recall the construction of $(-1)$-truncation via \v{C}ech-nerves which will 
be of relevance on multiple occasions in this paper.

Given an $\infty$-category $\mathcal{C}$ with pullbacks, the \emph{\v{C}ech-nerve} $\check{C}(f)$ of a map
$f\colon E\rightarrow B$ in $\mathcal{C}$ (if it exists) is given by the right Kan extension of the edge
$\{f\}\colon(\Delta^1)^{op}\rightarrow\mathcal{C}$ to the opposite of the category $\Delta_+$ of augmented simplicial 
sets along the (opposite of the) fully faithful inclusion $\Delta^1\hookrightarrow\Delta_+$, $i\mapsto i-1$ 
\cite[Section 6.1.2]{luriehtt}. Thus, $\check{C}(f)$ is an augmented simplicial object
$\check{C}(f)\colon\Delta_+^{op}\rightarrow\mathcal{C}$ which restricts to $f$ on degree $\leq 0$, together with 
equivalences 
\[\check{C}(f)_n\xrightarrow{\simeq}E\times_B E\dots\times_B E\]
induced by the points $[0]\rightarrow [n]$ for all $n\geq 1$.
The \v{Cech}-nerve $\check{C}(f)$ of a map $f$ in an $\infty$-category $\mathcal{C}$ plays the role of the kernel 
pair associated to a map in a 1-category. Whenever $\mathcal{C}$ is an $\infty$-topos for example (or more generally 
whenever $\mathcal{C}$ is regular, see Section~\ref{secregtop}), it will be used to compute the $(-1)$-truncation 
$f_{-1}\colon|\check{C}(f)|\hookrightarrow B$ of $f$ as the natural map out of the colimit of the underlying 
simplicial object of $\check{C}(f)$.

\begin{lemma}\label{propmodbase}
Let $\mathcal{C}$ be a small $\infty$-category and let $M$ be a modulator on $\mathcal{C}$.
\begin{enumerate}
\item The collection
\[M_{-1}(B):=\{f_{-1}\colon |\check{C}(f)|\hookrightarrow yB\mid f\in M(B)\}\]
of sieves obtained by $(-1)$-truncation of the maps in $M$ generates a Grothendieck topology $J$ whose sheaves 
are exactly the $(M_{-1})$-local objects.
\item The Grothendieck topology $J$ consists exactly of those monomorphisms with representable codomain which are 
contained in the saturation $\mathrm{Sat}(M)$. 
\end{enumerate}
\end{lemma}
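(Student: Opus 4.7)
The plan is to treat the two parts separately. Part (1) reduces to standard Grothendieck topology generation, while part (2) requires comparing $J$ with the mono-part of the modality $\mathcal{L}$ using the plus-construction and the interaction of $(-1)$-truncation with $M$.

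For part (1), each $f_{-1} \in M_{-1}$ is by construction a monomorphism into the representable $yC$, hence a sieve, and the class contains every maximal sieve since $1_{yC} \in M(C)$ truncates to $1_{yC}$. Standard machinery then produces the smallest Grothendieck topology $J$ containing $M_{-1}$, obtained by closing under base change along morphisms $yD \to yC$, composition, and local character. The equivalence between being a $J$-sheaf and an $M_{-1}$-local presheaf follows because these closure operations preserve locality: a presheaf satisfying descent against $M_{-1}$ automatically satisfies descent against sieves obtained from $M_{-1}$ by pullback and transitivity.

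For part (2), I will prove both inclusions between $J$ and the class $\mathcal{L}_{\mathrm{mono}} := \mathcal{L} \cap \{\text{monomorphisms with representable codomain}\}$. The forward inclusion $J \subset \mathcal{L}_{\mathrm{mono}}$ reduces to showing $M_{-1} \subset \mathcal{L}$ together with closure of $\mathcal{L}_{\mathrm{mono}}$ under the Grothendieck topology operations. Given $f \in M \subset \mathcal{L}$, the \v{C}ech-nerve factorization $f = f_{-1} \circ e$ exhibits $f$ through an effective epimorphism $e$; applying the modality's reflector $L$ and exploiting its compatibility with the (effective-epi, mono) factorization system, the fact that $L(f)$ is an iso forces $L(f_{-1})$ to be iso as well, placing $f_{-1}$ in $\mathcal{L}$. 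Closure of $\mathcal{L}_{\mathrm{mono}}$ under the Grothendieck topology operations then rests on the relevant closure properties of $\mathcal{L}$ under base change of monomorphisms and composition.

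For the reverse inclusion $\mathcal{L}_{\mathrm{mono}} \subset J$, I would argue via the plus-construction. Any mono $m : S \hookrightarrow yC$ in $\mathcal{L}$ admits a sheafification computed by transfinitely iterating the plus-construction per \cite[Theorem 3.4.2]{as_soa}, and this sheafification is an isomorphism by hypothesis. Unwinding the pointwise formula \eqref{equdefplus} presents each plus-iterate as a colimit indexed by the modulator data $M(C)$, and matching these iterates with Grothendieck topology closure operations applied to $M_{-1}$ exhibits $m$ itself as a $J$-covering sieve. I anticipate this reverse inclusion to be the main obstacle: the transfinite plus-construction must be tracked with care, and the bookkeeping required to identify its stages with the generation of sieves from $M_{-1}$ under pullback and transitivity will be the technical heart of the proof.
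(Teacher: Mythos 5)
Your overall architecture (two inclusions, with the hard direction handled by the plus-construction) matches the paper's, but the step you yourself flag as ``the technical heart'' --- the inclusion $\mathcal{L}\cap\mathcal{M}\subseteq J$ --- is precisely where your proposal stops being a proof. The bookkeeping you propose, namely matching the transfinite stages of the $M$-plus-construction of $m$ with sieve-closure operations applied to $M_{-1}$, is not what makes the argument go through, and it is not clear it can be made to work: the colimit in the formula (\ref{equdefplus}) is indexed by $M\downarrow f$, whose objects are arbitrary squares with domain in $M$, and these do not directly correspond to covering sieves. The paper instead builds, by ordinal recursion, a cone of comparison maps $\iota_\mu\colon((a)^+_M)^{(\mu)}\rightarrow(a)^+_J$ over $yC$, obtained by sending each square with domain in $M$ to its associated pullback square, whose domain lies in $J$ because $a$ is monic and $J$ is upward closed. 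Two further inputs are essential and absent from your sketch: that the $J$-plus-construction of a monomorphism converges after a single step and (by left exactness of $J$-sheafification) lands in a \emph{subobject} $(a)^+_J\hookrightarrow yC$; and the concluding observation that once $((a)^+_M)^{(\lambda)}$ is an equivalence, the monomorphism $(a)^+_J$ acquires a section and is therefore itself an equivalence, whence $a\in J$. Without some version of this mechanism your reverse inclusion remains a statement of intent.

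A secondary issue concerns your forward inclusion. Your argument that $f_{-1}\in\mathcal{L}$ runs through the reflector $L$ and identifies membership in $\mathcal{L}$ with $L$ inverting the map; but the lemma is stated for an arbitrary modulator $M$, whose generated modality need not be left exact, and for which the left class (the saturation of $M$) is in general strictly smaller than the class of maps inverted by the localization (the strong saturation). Showing that $L(f_{-1})$ is an equivalence therefore does not by itself place $f_{-1}$ in $\mathcal{L}$. The paper avoids this by arguing directly with closure properties of the left class of a modality --- pullback-stability and right cancellability, which imply closure under $(-1)$-truncation --- rather than with the reflector.
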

\begin{proof}
For Part 1, since $(-1)$-truncation in $\hat{\mathcal{C}}$ is pullback-stable, the class $M_{-1}$ is a modulator which consists of 
monomorphisms. Hence, by \cite[Corollary 3.4.14]{as_soa}, the (topological) saturation $\mathrm{Sat}(M_{-1})$ that it 
generates is left exact. We therefore obtain a Grothendieck topology $J$ which consists of the maps in
$\mathrm{Sat}(M_{-1})$ with representable codomain. As both $M_{-1}$ and $J$ are generating sets of
$\mathrm{Sat}(M_{-1})$, the statement follows.
Part 2 follows directly from \cite[Proposition 4.1.14]{abfjsheavesII}.
\end{proof}

\begin{corollary}\label{cormodbase}
Let $\mathcal{C}$ be a small $\infty$-category, let $M$ be a modulator on $\mathcal{C}$ such that the localization
$\hat{\mathcal{C}}\rightarrow\mathrm{Sh}_M(\mathcal{C})$ at $M$ is left exact, and let $J$ be the associated 
Grothendieck topology from Lemma~\ref{propmodbase}.1. Then the factorization of the left exact localization
$\hat{\mathcal{C}}\rightarrow\mathrm{Sh}_{M}(\mathcal{C})$ into a topological localization followed by a 
cotopological localization is given by
\[\hat{\mathcal{C}}\rightarrow\mathrm{Sh}_{J}(\mathcal{C})\rightarrow\mathrm{Sh}_{M}(\mathcal{C}).\] 
\end{corollary}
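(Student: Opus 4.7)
The strategy is to invoke the essential uniqueness of the topological/cotopological factorization of accessible left exact localizations of $\infty$-toposes, due to Lurie \cite[Proposition 6.5.2.19, Remark 6.5.2.20]{luriehtt}. By that uniqueness, it suffices to identify the topological part of the left exact localization $\hat{\mathcal{C}}\to\mathrm{Sh}_M(\mathcal{C})$ with $\mathrm{Sh}_J(\mathcal{C})$.

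Let $\mathcal{L}$ be the left class of the modality generated by $M$ and $\mathcal{M}$ the class of monomorphisms in $\hat{\mathcal{C}}$. By Lurie's theorem the topological part is precisely the localization of $\hat{\mathcal{C}}$ at $\mathcal{L}\cap\mathcal{M}$. First I would verify the chain of reflective inclusions $\mathrm{Sh}_M(\mathcal{C})\subseteq\mathrm{Sh}_J(\mathcal{C})\subseteq\hat{\mathcal{C}}$: since $J\subseteq\mathcal{L}\cap\mathcal{M}$ by Lemma~\ref{propmodbase}.2, every $M$-local object is in particular $J$-local. In particular the localization at $\mathcal{L}\cap\mathcal{M}$ factors through $\hat{\mathcal{C}}\to\mathrm{Sh}_J(\mathcal{C})$, and this first leg is topological by construction.

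It remains to show that $\hat{\mathcal{C}}\to\mathrm{Sh}_J(\mathcal{C})$ already inverts every element of $\mathcal{L}\cap\mathcal{M}$, for then the further localization at $\mathcal{L}\cap\mathcal{M}$ is trivial and $\mathrm{Sh}_J(\mathcal{C})$ is identified with the topological part. Given $a\colon A\hookrightarrow X$ in $\mathcal{L}\cap\mathcal{M}$, for every map $c\colon yC\to X$ the pullback $a_c\colon A\times_X yC\hookrightarrow yC$ is a monomorphism in $\mathcal{L}$ by pullback-stability, and hence belongs to $J$ by Lemma~\ref{propmodbase}.2. Since colimits are universal in the presheaf $\infty$-topos $\hat{\mathcal{C}}$, we have $X\simeq\mathrm{colim}_{yC\to X}yC$ and $A\simeq\mathrm{colim}_{yC\to X}(A\times_X yC)$; applying the left exact cocontinuous $J$-sheafification then takes $a$ to the colimit of equivalences $a_c$, hence to an equivalence in $\mathrm{Sh}_J(\mathcal{C})$.

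The main technical point is the final local-to-global step, which relies on universality of colimits in $\hat{\mathcal{C}}$ together with cocontinuity and left-exactness of $J$-sheafification. None of this is expected to pose a genuine obstacle, as everything reduces to Lemma~\ref{propmodbase} together with standard properties of left exact localizations of presheaf $\infty$-categories.
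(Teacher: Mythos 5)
Your proof is correct and follows essentially the same route as the paper: both arguments reduce everything to Lemma~\ref{propmodbase}.2 together with the local-to-global step of restricting a monomorphism in $\mathcal{L}$ to representables and reassembling via descent and cocontinuity of sheafification, all against the backdrop of Lurie's uniqueness of the topological/cotopological factorization. The only (immaterial) difference is that you verify directly that $\mathrm{Sh}_J(\mathcal{C})$ is the full topological part, whereas the paper verifies the dual condition that the second leg is cotopological; by uniqueness of the factorization these amount to the same check.
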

\begin{proof}
This follows from Lemma~\ref{propmodbase}.2 along the lines of the proof of \cite[Proposition 6.5.2.19]{luriehtt}. 
Indeed, the only part left to show is that the latter localization is cotopological. Therefore we have to prove that 
whenever $f\colon X\hookrightarrow Y$ is an inclusion of $J$-sheaves which is mapped to 
an equivalence in $\mathrm{Sh}_{M}(\mathcal{C})$, then $f$ was an equivalence in $\mathrm{Sh}_J(\mathcal{C})$ 
already. 
Thus, given such an inclusion $f$ between $J$-sheaves, it follows that all pullbacks to representables of $f$ in
$\hat{\mathcal{C}}$ are inclusions which are each mapped to equivalences in $\mathrm{Sh}_{M}(\mathcal{C})$. That 
means, they are all elements of the strong saturation of $M$ \cite[Definition 2.2.6]{abfjsheavesI}, and hence in 
particular, elements of the saturation $\mathrm{Sat}(M)$ of $M$. Hence, they are contained in $J$ by 
Lemma~\ref{propmodbase}.2. It follows that all pullbacks to representables of $f$ are mapped to equivalences in
$\mathrm{Sh}_J(\mathcal{C})$, and hence so is $f$.
\end{proof}

%

\begin{remark}\label{remidmod}
In an earlier version of this paper the author defined the notion of an Id-modulator on an $\infty$-category
$\mathcal{C}$; that is, a modulator $M$ on $\mathcal{C}$ such that for every $m\colon X\rightarrow yB$ in $M(B)$ and 
every pair of sections $s_1,s_2$ of $m$, there is an equalizer $\mathrm{Equ}_{yC}(s_1,s_2)\rightarrow yB$ in
$\hat{\mathcal{C}}_{/yB}$ again contained in $M(B)$. Or in other words, such that for every $m\colon X\rightarrow yB$ 
in $M(B)$ the diagonal $\Delta(m)$ is locally again contained in $M$ (up to equivalence).
One can show that every modulator on $\mathcal{C}$ which is fiberwise left exact is an Id-modulator, that all higher 
diagonals of maps in an Id-modulator $M$ are again locally contained in $M$, and that a ``transitive'' modulator $M$ 
is an Id-modulator if and only if it is fiberwise left exact. 
In the meantime, the primary purpose of this definition (to prove Theorem~\ref{thmlexlocgen} for Id-modulators) has 
become obsolete in light of the stronger statement provided by \cite[Theorem 4.1.9]{abfjsheavesI}. Therefore, a 
formal discussion will be omitted. It may be worth pointing out however that the proof of
\cite[Theorem 4.1.9]{abfjsheavesI} is fairly long and intricate, and that Theorem~\ref{thmlexlocgen} is much easier 
to prove for Id-modulators $M$ directly. Indeed, one can show by way of a straight-forward recursive argument that 
the fiberwise finite limit closure $M^{\mathrm{lex}}(B)\subset\hat{\mathcal{C}}_{/yB}$ of an Id-modulator $M$ is 
contained in the saturation $\mathrm{Sat}(M)$. This implies that the latter is left exact by
\cite[Proposition 3.4.9]{as_soa}. Furthermore, all modulators we 
consider in this paper to generate $\infty$-toposes from are induced from well-structured colimit pre-topologies as 
to be introduced in Section~\ref{secsubstrcolimtop} and Section~\ref{secsubhcd}. Such modulators are always
Id-modulators (Remark~\ref{remwicolimittop}). Therefore, the notion of an Id-modulator may serve well both for 
expository as well as for alternative meta-theoretical propositions after all.
\end{remark}

We end this section with one more definition for deliberate use in the next three sections.

\begin{definition}
Let $\mathcal{C}$ be a locally small $\infty$-category. A left exact localization
$\hat{\mathcal{C}}\rightarrow\mathcal{E}$ is \emph{sub-canonical} if the Yoneda embedding
$y\colon\mathcal{C}\rightarrow\hat{\mathcal{C}}$ factors through the associated right adjoint inclusion
$\mathcal{E}\hookrightarrow\hat{\mathcal{C}}$.
\end{definition} 

\section{Higher covering diagrams and descent}\label{secdesc}

In this section we introduce a theory of higher covering diagrams (Section~\ref{secsubhcd}) which forms the core notion of this paper. To do so in due generality, we first relativize the descent conditions as usually imposed 
on an $\infty$-category to one diagram at a time in Section~\ref{secsubdesc}. In Section~\ref{secsubstrcolimtop} we 
propose a definition of a structured colimit pre-topology. This allows us to generate sheaf theories from suitable 
classes of diagrams directly rather than to repeatedly manipulate associated modulators.
In Section~\ref{secsubinftydesc} we define a stability condition on diagrams which gives rise to the notion of a 
higher covering diagram under one additional tameness assumption. In Section~\ref{secsubhcd} we then 
define higher covering diagrams. We show that they form a structured colimit pre-topology which presents the 
canonical sheaf theory over any small $\infty$-category $\mathcal{C}$ with pullbacks (up to a size caveat).

\subsection{Descent diagrams}\label{secsubdesc}

In this section we specify the notion of descent as defined in \cite{aneljoyaltopos} for a single diagram at a time, 
and further relativize it so to capture instances thereof in $\infty$-categories which do not have descent globally 
and which do not necessarily exhibit arbitrary finite limits or colimits in the first place. 

\begin{definition}\label{defpredescdiag}
Let $\mathcal{C}$ be an $\infty$-category and let $B\in\mathcal{C}$. A diagram $U\colon I\rightarrow\mathcal{C}_{/B}$ 
is
\begin{enumerate}
\item \emph{colimiting} if it is so when considered as a cocone from
$sU\colon I\rightarrow\mathcal{C}_{/B}\rightarrow\mathcal{C}$ to the object $B$, and
\item \emph{decomposable} if $\mathcal{C}$ has all pullbacks along the components $U_i\colon s(U_i)\rightarrow B$ for 
each $i\in I$.
\end{enumerate}
\end{definition}


For a given decomposable diagram $U\colon I\rightarrow\mathcal{C}_{/B}$ and a given arrow
$f\colon C\rightarrow B$ in an $\infty$-category $\mathcal{C}$ we can construct a base change
\begin{align}\label{equbasechange}
f^{\ast}U\colon I\rightarrow\mathcal{C}_{/C}
\end{align}
of $U$ as follows. Consider the full sub-$\infty$-category
$(\mathcal{C}_{/B})_f\subseteq\mathcal{C}_{/B}$ spanned by those objects $D\in\mathcal{C}_{/B}$ such that a pullback 
$f^{\ast}D\in\mathcal{C}_{/C}$ does exist. Then there is a base change functor
$f^{\ast}\colon(\mathcal{C}_{/B})_f\rightarrow\mathcal{C}_{/C}$ constructed as the right adjoint to the 
postcomposition $\Sigma_f\colon\mathcal{C}_{/C}\rightarrow\mathcal{C}_{/B}$ relative to the inclusion
$(\mathcal{C}_{/B})_f\hookrightarrow\mathcal{C}_{/B}$ (see e.g.\ \cite[Definition 2.23]{rs_ext} for a brief 
discussion of relative adjoints in this context). The diagram $U\colon I\rightarrow\mathcal{C}_{/B}$ factors through 
$(\mathcal{C}_{/B})_f$ by assumption, and hence gives rise to a functor (\ref{equbasechange}) via post-composition.

In fact, more generally, for every decomposable diagram $U\colon I\rightarrow\mathcal{C}_{/B}$ one can 
construct a product functor
\begin{align}\label{equpredescpbfunct}
U\times_B -\colon\mathrm{Fun}(I,\mathcal{C}_{/B})\rightarrow\mathrm{Fun}(I,\mathcal{C}_{/B})
\end{align}
which maps a diagram $V\colon I\rightarrow\mathcal{C}_{/B}$ and an object $i\in I$ to the fiber product
$U_i\times_C V_i$. Whenever $\mathcal{C}$ has pullbacks, this product functor exists formally 
because in this case $\mathrm{Fun}(I,\mathcal{C}_{/B})$ has all products. Otherwise, we may embed
$\mathcal{C}_{/B}$ in the left exact $\infty$-category $\hat{\mathcal{C}}_{/yB}$ via its Yoneda embedding. The 
restriction of the product functor
\[yU\times_{yB} -\colon\mathrm{Fun}(I,\hat{\mathcal{C}}_{/yB})\rightarrow\mathrm{Fun}(I,\hat{\mathcal{C}}_{/yB})\]
to $\mathrm{Fun}(I,\mathcal{C}_{/B})\hookrightarrow\mathrm{Fun}(I,\hat{\mathcal{C}}_{/yB})$ then factors to give a 
product functor (\ref{equpredescpbfunct}) in $\mathcal{C}$.

\begin{definition}\label{defpredescdiagstable}
A colimiting and decomposable diagram $U\colon I\rightarrow\mathcal{C}_{/B}$ is a \emph{pre-descent diagram} if for 
every morphism $f\colon C\rightarrow B$ the base change $f^{\ast}U\colon I\rightarrow\mathcal{C}_{/C}$ has a colimit.
\end{definition}

The base change $f^{\ast}U\colon I\rightarrow\mathcal{C}_{/C}$ of a pre-descent diagram
$U\colon I\rightarrow\mathcal{C}_{/B}$ is always colimiting over some base
$\mathrm{colim}f^{\ast}U\in\mathcal{C}_{/C}$ by definition. However it is not necessarily a pre-descent diagram again 
itself, because the canonical morphism $\mathrm{colim}f^{\ast}U\rightarrow C$ need not be an equivalence.

\begin{definition}
A pre-descent diagram $U\colon I\rightarrow\mathcal{C}_{/B}$ is a \emph{semi-descent diagram} if for every
$f\colon C\rightarrow B$ in $\mathcal{C}$ the base change $f^{\ast}U\colon I\rightarrow\mathcal{C}_{/C}$ is again a
pre-descent diagram.
\end{definition}

As iterated pullbacks compose, a pre-descent diagram $U\colon I\rightarrow\mathcal{C}_{/B}$ is a semi-descent 
diagram if and only if for every $f\colon C\rightarrow B$ the base change
$f^{\ast}U\colon I\rightarrow\mathcal{C}_{/C}$ is again colimiting. This in turn holds if and only if the colimit of 
$U$ is universal in the usual sense \cite[Section 6.1.1.(ii)]{luriehtt}. It is also easy to see that
$U\colon I\rightarrow\mathcal{C}_{/B}$ is a semi-descent diagram if and only if for every $f\colon C\rightarrow B$ 
the base change $f^{\ast}U\colon I\rightarrow\mathcal{C}_{/C}$ is again a semi-descent diagram. Thus, the notions of 
pullback-stable pre-descent diagram, semi-descent diagram, and pullback-stable semi-descent diagram all coincide.

Given any decomposable diagram $U\colon I\rightarrow\mathcal{C}_{/B}$, there is a canonical functor 
\begin{align}\label{equpredefpredescent}
\mathrm{res}_U\colon\mathcal{C}_{/B}\rightarrow\mathrm{Desc}(U)
\end{align}
of $\infty$-categories, where the codomain $\mathrm{Desc}(U)\subseteq\mathrm{Fun}(I,\mathcal{C}_{/B})_{/U}$ denotes 
the full sub-$\infty$-category spanned by the cartesian natural transformations over $U$. It maps an object
$f\colon C\rightarrow B$ to the cartesian natural transformation $\mathrm{res}_U(f)$ given pointwise by its 
associated pullbacks along the components $U_i\colon sU_i\rightarrow B$. It can be formally defined in this 
generality as the composition
\[\mathrm{res}_U\colon\mathcal{C}_{/B}\xrightarrow{\mathsf{C}}\mathrm{Fun}(I,\mathcal{C}_{/B})\xrightarrow{U\times_B -}\mathrm{Fun}(I,\mathcal{C}_{/B})_{/U}\]
via the product functor (\ref{equpredescpbfunct}) which is easily seen to factor through $\mathrm{Desc}(U)$.
%
It directly generalizes the construction of the same functor $\mathrm{res}_U$ from
\cite[Section 3.3.2]{aneljoyaltopos} in case $U$ is colimiting and $\mathcal{C}$ has all finite limits and colimits. 
This functor will be used to define descent in the obvious way. First however, to relativize the notion of descent to 
suitable classes of diagrams, we make the following additional definitions. Therefore, we refer to the definition of 
cofinal equivalence from Definition~\ref{defcofequiv}.

\begin{definition}\label{defsemidescentclass}
Let $\mathcal{C}$ be an $\infty$-category and $T=\{T(B)\mid B\in\mathcal{C}\}$ be a class of diagrams of type
$I\rightarrow\mathcal{C}_{/B}$ for $B\in\mathcal{C}$ and $I\in\mathrm{Cat}_{\infty}$. Say that
$T$ is \emph{cofinally stable (under base change)} at a diagram $U\colon I\rightarrow\mathcal{C}_{/B}$ 
in $T(B)$ if for all $f\colon C\rightarrow B$ in $\mathcal{C}$ there is a diagram contained in $T(C)$ that is 
cofinally equivalent to the base change $f^{\ast}U\colon I\rightarrow\mathcal{C}_{/C}$. Say that $T$ is 
\emph{cofinally stable (under base change) in $\mathcal{C}$} if it is cofinally stable at all diagrams in $T$. 
\end{definition}

\begin{notation}
Let $T=\{T(B)\mid B\in\mathcal{C}\}$ be class of diagrams as in Definition~\ref{defsemidescentclass}, and suppose 
$U\colon I\rightarrow\mathcal{C}_{/B}$ is a diagram. 
Let
\[\mathrm{Desc}_T(U)\subseteq\mathrm{Desc}(U)\]
denote the full sub-$\infty$-category spanned by those cartesian natural transformations $V\rightarrow U$ such that 
there is some object $C\rightarrow B$ together with a factorization $V\colon I\rightarrow\mathcal{C}_{/C}$ which is 
cofinally equivalent to some diagram in $T(C)$. 
\end{notation}

Whenever $T$ is cofinally stable under base change at a given diagram $U\colon I\rightarrow\mathcal{C}_{/B}$ in
$T(B)$, the functor $\mathrm{res}_U$ of (\ref{equpredefpredescent}) factors through $\mathrm{Desc}_T(U)$.
Clearly, every class $T$ of diagrams that is actually stable under base change at a diagram $U\in T$ is cofinally 
stable under base change at $U$. In particular, the class $\mathrm{Rex}(\mathcal{C})$ of colimiting diagrams in
$\mathcal{C}$ is cofinally stable at any given pre-descent diagram $U\colon I\rightarrow\mathcal{C}_{/B}$.
We obtain the following straight-forward generalization of Anel and Joyal's characterization of descent in 
\cite[Section 3.3.2]{aneljoyaltopos} .

\begin{lemma}\label{lemmaexostableadj}
Let $\mathcal{C}$ be an $\infty$-category and let $U\colon I\rightarrow\mathcal{C}_{/B}$ be a pre-descent diagram.
\begin{enumerate}
\item The functor
\[\mathrm{res}_U\colon\mathcal{C}_{/B}\rightarrow\mathrm{Desc}_{\mathrm{Rex}}(U)\] has a left adjoint
$\mathrm{glue}_U\colon\mathrm{Desc}_{\mathrm{Rex}}(U)\rightarrow\mathcal{C}_{/B}$.
\item $U$ is a semi-descent diagram if and only if the counit of this adjunction is an equivalence.
That is, if and only if the functor (\ref{equpredefpredescent}) is fully faithful.
\end{enumerate}
\end{lemma}
\begin{proof}
For Part 1 we are to show that for every cartesian natural transformation
$\alpha\colon V\rightarrow U$ in $\mathrm{Desc}_{\mathrm{Rex}}(U)$ the pullback
\[\xymatrix{
\alpha_{/\mathrm{res}_U}\ar@{->>}[d]\ar[r]\ar@{}[dr]|(.3){\pbs} & \mathrm{Desc}_{\mathrm{Rex}}(U)_{\alpha/}\ar@{->>}[d] \\
\mathcal{C}_{/B}\ar[r]_{\mathrm{res}_U} & \mathrm{Desc}_{\mathrm{Rex}}(U)
}\]
of $\infty$-categories has an initial object \cite[Proposition 6.1.11]{cisinskibook}. To do so, consider
$\mathrm{Desc}_{\mathrm{Rex}}(U)$ fully embedded in the slice $\mathrm{Fun}(I,\mathcal{C}_{/B})_{/U}$, so
$\mathrm{res}_U$ is the composition 
\[\mathcal{C}_{/B}\xrightarrow{\mathsf{C}}\mathrm{Fun}(I,\mathcal{C}_{/B})\xrightarrow{U\times_B -}\mathrm{Fun}(I,\mathcal{C}_{/B})_{/U}.\]
There is an intermediate homotopy-cartesian square
\[\xymatrix{
\mathrm{Fun}(I,\mathcal{C}_{/B})_{V/}\ar@{->>}[d]\ar[r] & (\mathrm{Fun}(I,\mathcal{C}_{/B})_{/U})_{\alpha/}\ar@{->>}[d] \\
\mathrm{Fun}(I,\mathcal{C}_{/B})\ar[r]_{U\times_B -} & \mathrm{Fun}(I,\mathcal{C}_{/B})_{/U}.
}\]
We obtain a resulting homotopy-cartesian square of the from
\[\xymatrix{
\alpha_{/\mathrm{res}_U}\ar@{->>}[d]\ar[r] & \mathrm{Fun}(I,\mathcal{C}_{/B})_{V/}\ar@{->>}[d] \\
\mathcal{C}_{/B}\ar[r]_(.4){\mathsf{C}} & \mathrm{Fun}(I,\mathcal{C}_{/B}).
}\]
It follows that $\alpha_{/\mathrm{res}_U}$ is equivalent to the $\infty$-category of cocones over
$V\colon I\rightarrow\mathcal{C}_{/B}$. Now, $V$ admits a colimit $\mathrm{colim}V\in\mathcal{C}_{/B}$ by assumption,  
and so the $\infty$-category $\alpha_{/\mathrm{res}_U}$ is equivalent to the under-category
$(\mathcal{C}_{/B})_{\mathrm{colim}V/}$. As such it has an initial object.

For Part 2, we just note that the counit of the resulting adjunction at an object $f\in\mathcal{C}_{/B}$ is the 
natural map $\mathrm{colim}f^{\ast}U\rightarrow f$ over $B$ induced by the cocone
$f^{\ast}U\colon I\rightarrow\mathcal{C}_{/\mathrm{dom}f}$. This cocone is colimiting for every object 
$f\in\mathcal{C}_{/B}$ exactly if $U$ is a semi-descent diagram. It follows that
$\mathrm{res}_U\colon\mathcal{C}_{/B}\rightarrow\mathrm{Desc}_{\mathrm{Rex}}(U)$ is fully faithful if and only if $U$ 
is a semi-descent diagram. As the inclusion $\mathrm{Desc}_{\mathrm{Rex}}(U)\hookrightarrow\mathrm{Desc}(U)$ is 
itself fully faithful, it follows that the push-forward
$\mathrm{res}_U\colon\mathcal{C}_{/B}\rightarrow\mathrm{Desc}(U)$ is fully faithful if and only if $U$ is a
semi-descent diagram. 
\end{proof}

\begin{definition}
Let $\mathcal{C}$ be an $\infty$-category and let $U\colon I\rightarrow\mathcal{C}_{/B}$ be a semi-descent diagram. 
The diagram $U$ is a \emph{descent diagram} if the functor
\begin{align*}
\mathrm{res}_U\colon\mathcal{C}_{/B}\rightarrow\mathrm{Desc}(U)
\end{align*}
is essentially surjective.
\end{definition}

Whenever $U\colon I\rightarrow\mathcal{C}_{/B}$ is a descent diagram, it follows that
$\mathrm{Desc}_{\mathrm{Rex}}(U)=\mathrm{Desc}(U)$, and so the left adjoint $\mathrm{glue}_U$ from 
Lemma~\ref{lemmaexostableadj}.1 is defined on all of $\mathrm{Desc}(U)$.

\begin{example}\label{explelogoi}
Let $\mathcal{C}$ be a cocomplete and finitely complete $\infty$-category. Then every colimiting diagram
$U\colon I\rightarrow\mathcal{C}_{/B}$ is a pre-descent diagram, and
$\mathrm{Desc}_{\mathrm{Rex}}(U)=\mathrm{Desc}(U)$. 
For such an $\infty$-category $\mathcal{C}$, all small colimiting diagrams $U\colon I\rightarrow\mathcal{C}_{/B}$ are
semi-descent diagrams if and only if all small colimits in $\mathcal{C}$ are universal. That is, if and only if the 
counit of the adjunction in Lemma~\ref{lemmaexostableadj}.1 is an equivalence for all small colimiting diagrams
$U\colon I\rightarrow\mathcal{C}_{/B}$. All such diagrams are descent diagrams in $\mathcal{C}$ if and only if the
$\infty$-category $\mathcal{C}$ has descent in the sense of \cite[Section 3.3.2]{aneljoyaltopos}.
\end{example}

\begin{definition}\label{defdescentclass}
Let $\mathcal{C}$ be an $\infty$-category and $T=\{T(B)\mid B\in\mathcal{C}\}$ be a class of diagrams of type
$I\rightarrow\mathcal{C}_{/B}$ for $B\in\mathcal{C}$ and $I\in\mathrm{Cat}_{\infty}$.
\begin{enumerate}
\item $T$ is a \emph{semi-descent class} if every diagram in $T$ is a semi-descent diagram and $T$ is
cofinally stable under base change in $\mathcal{C}$. 
\item $T$ is a \emph{descent class} if it is a semi-descent class and for all diagrams
$U\colon I\rightarrow\mathcal{C}_{/B}$ in $T$ the functor
\[\mathrm{res}_U\colon\mathcal{C}_{/B}\rightarrow\mathrm{Desc}_T(U)\]
is an equivalence. In that case we say that any such given $U$ has descent with respect to $T$.
\end{enumerate}
\end{definition}

Thus, in the terminology of Definition~\ref{defdescentclass} a diagram $U\colon I\rightarrow\mathcal{C}_{/B}$ is a 
descent diagram if and only if $U$ has descent with respect to the class of all small diagrams in $\mathcal{C}$.
The following reflection principle (barring the wording and some details) is an observation Mathieu Anel made the 
author aware of.

\begin{lemma}[Reflection of descent classes]\label{lemmadescentdiaganel}
Suppose $F\colon\mathcal{C}\rightarrow\mathcal{D}$ is a fully faithful pullback-preserving functor. 
\begin{enumerate}
\item Let $U\colon I\rightarrow\mathcal{C}_{/B}$ be a pre-descent diagram. If $F$ preserves the colimit of $U$ and 
the push-forward $FU\colon I\rightarrow\mathcal{D}_{/F(B)}$ is a semi-descent diagram, then $U$ was a semi-descent 
diagram already.
\item Suppose $T$ is a cofinally stable class of pre-descent diagrams, and the functor
$F$ preserves the colimit of all diagrams contained in $T$. If the image $F[T]$ is contained in a descent class $S$, 
then $T$ was a descent class already.
\end{enumerate}
\end{lemma}

\begin{proof}
Given a pre-descent diagram $U\colon I\rightarrow\mathcal{C}_{/B}$, we obtain a commutative square as follows.
\begin{align}\label{diaglemmadescentdiaganel}
\begin{gathered}
\xymatrix{
\mathcal{C}_{/B}\ar[r]^(.4){\mathrm{res}_U}\ar@{^(->}[d]_{F_{/B}} & \mathrm{Desc}(U)\ar@{^(->}[d]^{((F_{/B})_{\ast})_{/U}}\\
\mathcal{D}_{/F B}\ar[r]_(.4){\mathrm{res}_{F U}}& \mathrm{Desc}(F U)
}
\end{gathered}
\end{align}
The sliced embedding $F_{/B}\colon\mathcal{C}_{/B}\rightarrow\mathcal{D}_{/F B}$ is again fully faithful, and 
so is the push-forward
$(F_{/B})_{\ast}\colon\mathrm{Fun}(I,\mathcal{C}_{/B})\hookrightarrow\mathrm{Fun}(I,\mathcal{D}_{/F B})$. In 
particular, so is the sliced push-forward
$((F_{/B})_{\ast})_{/U}\colon\mathrm{Fun}(I,\mathcal{C}_{/B})_{/U}\hookrightarrow\mathrm{Fun}(I,\mathcal{D}_{/F B})_{/F U}$. As $F$ preserves pullbacks, both vertical functors in 
Diagram~(\ref{diaglemmadescentdiaganel}) are well-defined and fully faithful. The square commutes 
again by the assumption that the embedding $F$ preserves pullbacks.
It follows that fully faithfulness of $\mathrm{res}_{F U}$ implies fully faithfulness of $\mathrm{res}_{U}$.
This proves Part 1. Under the assumptions of Part 2, it follows from Part 1 that $T$ is a semi-descent class. For any given diagram $U\colon I\rightarrow\mathcal{C}_{/B}$ in $T$, consider the square
\[\xymatrix{
\mathcal{C}_{/B}\ar@{^(->}[d]_{F_{/B}} & \mathrm{Desc}_T(U)\ar@{^(->}[d]^{((F_{/B})_{\ast})_{/U}}\ar[l]_(.55){\mathrm{glue}_U}\\
\mathcal{D}_{/F C} & \mathrm{Desc}_{S}(F U),\ar[l]^(.55){\mathrm{glue}_{F U}}
}\]
where the horizontal arrows are the left adjoints of Lemma~\ref{lemmaexostableadj}.1 restricted to
$\mathrm{Desc}_T(U)$ and $\mathrm{Desc}_{S}(F U)$, respectively. The square commutes by assumption on $F$ together 
with Lemma~\ref{lemmacofequiv}. Thus, fully faithfulness of
$\mathrm{glue}_{F U}$ implies fully faithfulness of $\mathrm{glue}_U$ in the same way. 

\end{proof}

We end this section with the statement of various stability properties of the classes of semi-descent and descent 
diagrams in an $\infty$-category $\mathcal{C}$.

%

\begin{lemma}[Equivalences]\label{lemmadescdiagequiv}
Let $\mathcal{C}$ be an $\infty$-category, let $B\in\mathcal{C}$ be an object, and let
$U\colon I\rightarrow\mathcal{C}_{/B}$ be a diagram.
\begin{enumerate}
\item Suppose $\phi\colon J\rightarrow I$ is an equivalence of $\infty$-categories. Then
$U\colon I\rightarrow\mathcal{C}_{/B}$ is a pre-descent/semi-descent/descent diagram if and only if 
$U\phi\colon J\rightarrow\mathcal{C}_{/B}$ is so.
\item Suppose $f\colon C\rightarrow B$ is an equivalence in $\mathcal{C}$. Then
$U\colon I\rightarrow\mathcal{C}_{/B}$ is a pre-descent/semi-descent/descent diagram if and only if
$f^{\ast}U\colon I\rightarrow\mathcal{C}_{/C}$ is so.
\end{enumerate}
\end{lemma}
\begin{proof}
Straight-forward.
\end{proof}

\begin{lemma}[Terminal objects and base change]\label{lemmadescdiagbase}
Let $\mathcal{C}$ be an $\infty$-category.
\begin{enumerate}
\item For every $B\in\mathcal{C}$ the diagram 
$\{1_B\}\colon\Delta^0\rightarrow\mathcal{C}_{/B}$ is a descent diagram.
\item Descent diagrams are stable under base change in $\mathcal{C}$, i.e.\ whenever
$U\colon I\rightarrow\mathcal{C}_{/B}$ is a descent diagram then for every $f\colon C\rightarrow B$ the base change 
$f^{\ast}U\colon I\rightarrow\mathcal{C}_{/C}$ is again a descent diagram.
\end{enumerate}
\end{lemma}
\begin{proof}
Part 1 is straight-forward. 
For Part 2, suppose $U\colon I\rightarrow\mathcal{C}_{/B}$ is a descent diagram and let $f\colon C\rightarrow B$ be a 
morphism in $\mathcal{C}$. We have already noted that $f^{\ast}U\colon I\rightarrow\mathcal{C}_{/C}$ is a
semi-descent diagram, so we are left to show that the functor
\[\mathrm{res}_{f^{\ast}U}\colon\mathcal{C}_{/C}\rightarrow\mathrm{Desc}(f^{\ast}U)\]
is essentially surjective. Therefore let $\alpha\colon V\rightarrow f^{\ast}U$ be a cartesian natural transformation 
and let $\mathrm{res}_U(f)\colon f^{\ast}U\rightarrow U$ be the canonical cartesian natural transformation induced by 
$f$. The composition $\mathrm{res}_U(f)\circ\alpha\colon V\rightarrow U$ is again cartesian, and so we obtain a 
colimit $\mathrm{glue}_U(\mathrm{res}_U(f)\circ\alpha)\colon\mathrm{colim}V\rightarrow B$ of the push-forward
$\Sigma_f V\colon I\rightarrow\mathcal{C}_{/C}\rightarrow\mathcal{C}_{/B}$ in $\mathrm{C}_{/B}$. The cocone
$V\colon I\rightarrow\mathcal{C}_{/C}$ yields a map
$\mathrm{glue}_{f^{\ast}U}(V)\colon\mathrm{colim}V\rightarrow C$ over $B$. We obtain a diagram of natural transformations in $\mathrm{Fun}(I,\mathcal{C}_{/B})$ as follows.
\[\xymatrix{
V\ar@/_2pc/[ddr]_(.3){\alpha}|(.475)\hole|(.55)\hole\ar@/^2pc/[rrrd]\ar[dr] & & & \\
& \mathsf{C}(\mathrm{glue}_{U}(\Sigma_f V))^{\ast}U \ar@{}[drr]|(.3){\pbs}\ar[d]_{\mathrm{res}_{f^{\ast}U}(\mathrm{glue}_{f^{\ast}U}(V))}\ar[rr]^(.6){\mathrm{glue}_{U}(\Sigma_f V))^{\ast}U} & & \mathsf{C}(\mathrm{colim} V)\ar[d]_{\mathsf{C}(\mathrm{glue}_{f^{\ast}U}(V))}\ar@/^2pc/[dd]^{\mathsf{C}(\mathrm{glue}_{U}(\Sigma_f V)))} \\ 
 & \mathsf(f)^{\ast}U\ar[d]_{\mathrm{res}_U(f)}\ar[rr]^{f^{\ast}U}\ar@{}[drr]|(.3){\pbs} & & \mathsf{C}(C)\ar[d]_{\mathsf{C}(f)} \\
  & U\ar[rr] & & \mathsf{C}(1)
}\]
The natural transformation $V\rightarrow\mathsf{C}(\mathrm{glue}_{U}(\Sigma_f V))^{\ast}U$ is the source of the unit 
of the adjunction $\mathrm{glue}_U\adj \mathrm{res}_U$ and hence a natural equivalence. It follows that $\alpha$ is 
equivalent to $\mathrm{res}_{f^{\ast}U}(\mathrm{glue}_{U}(\Sigma_f V)))$, and so $\mathrm{res}_{f^{\ast}U}$ is 
essentially surjective.
\end{proof}

We will be particularly interested in $\infty$-categories $\mathcal{C}$ with pullbacks. In this case, we furthermore 
obtain straight-forward proofs of the following two closure properties.

\begin{lemma}[Dependent composition]\label{lemmadescdepcomp}
Suppose $\mathcal{C}$ has pullbacks and let $U\colon I\rightarrow\mathcal{C}_{/B}$ be a (semi-)descent diagram. Let 
$J\twoheadrightarrow I$ be a cocartesian fibration and let $(\mathcal{C}_{/B})_{/U}\twoheadrightarrow I$ be the 
cocartesian fibration obtained by pullback of the (cocartesian)
target fibration $t\colon(\mathcal{C}_{/B})^{\Delta^1}\twoheadrightarrow\mathcal{C}_{/B}$ along
$U\colon I\rightarrow\mathcal{C}_{/B}$. Suppose $V\colon J\rightarrow\mathcal{C}_{/sU}$ is a cocartesian functor over 
$I$ such that for all $i\in I$ the fiber $V_i\colon J_i\rightarrow\mathcal{C}_{/U_i}$ is a (semi-)descent diagram. 
Then the composition
\[J\xrightarrow{V}(\mathcal{C}_{/B})_{/U}\xrightarrow{U_{\ast}}\mathcal{C}_{/B}\]
is again a (semi-)descent diagram.
\end{lemma}
\begin{proof}
We first are to show that the composition $U_{\ast}V$ is a semi-descent diagram whenever $U$ is so and $V$ is so 
pointwise. The fact that it is colimiting (over $B$) follows from \cite[Proposition 4.3.3.10]{luriehtt}, as 
$J\twoheadrightarrow I$ is cocartesian and for every point $\{i\}\colon\Delta^0\rightarrow I$ the colimit of the 
restriction $V(i)\colon J(i)\rightarrow\mathcal{C}_{/B}$ is $U_i$ by assumption. Furthermore, given a map
$f\colon C\rightarrow B$, the diagram
\[\xymatrix{
J\ar[r]^V\ar[dr]_{f^{\ast}V} & \mathcal{C}_{/U}\ar[r]^{U_{\ast}}\ar[d]^{f^{\ast}} & \mathcal{C}_{/B}\ar[d]^{f^{\ast}} \\
 & \mathcal{C}_{/f^{\ast}U}\ar[r]_{f^{\ast}U_{\ast}} & \mathcal{C}_{/C} 
}\]
commutes. As $U$ is a semi-descent diagram, so is $f^{\ast}U$. And similarly, as $V$ is pointwise a semi-descent 
diagram, so is the composition $f^{\ast}V$. It follows that the base change $f^{\ast}U_{\ast}V$ is again colimiting 
over $C$, and so $U_{\ast}V$ is a semi-descent diagram. 

Now, assume $U$ is a descent diagram and $V$ is so pointwise. If $\mathcal{C}$ has pullbacks, the restriction functor
$\mathrm{res}_U\colon\mathcal{C}_{/B}\rightarrow\mathrm{Desc}(U)$ is the natural map
\[\mathrm{res}_U\colon\mathcal{C}_{/B}\rightarrow\underset{i\in I}{\mathrm{lim}}\mathcal{C}_{/U_i}\]
induced between limits as argued in the Appendix. Again via (the dual of) \cite[Proposition 4.3.3.10]{luriehtt}, 
one constructs the following factorization.
\[\xymatrix{
\mathcal{C}_{/B}\ar[r]^(.4){\mathrm{res}_U}\ar@/_1pc/[drr]_{\mathrm{res}_{U_{\ast}V}} & \underset{i\in I}{\mathrm{lim}}\mathcal{C}_{/U_i} \ar[r]^(.4){\underset{i\in I}{\mathrm{lim}}\mathrm{res}_{V_i}} & \underset{i\in I}{\mathrm{lim}}\underset{j\in J(i)}{\mathrm{lim}}\mathcal{C}_{/V_j}\ar@{}[d]_[@!-90]{\simeq}\\
 & &  \underset{j\in J}{\mathrm{lim}}\mathcal{C}_{/U_{\ast}V_j} 
}\]
By assumption, both $\mathrm{res}_U$ and all $\mathrm{res}_{V_i}$ are equivalences. As equivalences of
$\infty$-categories are closed under limits and composition, the statement follows.
\end{proof}

\begin{corollary}[Products]\label{cordescstabprod}
Suppose $\mathcal{C}$ has pullbacks. Whenever $U\colon I\rightarrow\mathcal{C}_{/B}$ and $V\colon I\rightarrow\mathcal{C}_{/B}$ are descent diagrams, then 
so is the product 
\[U\times_{B}V\colon I\times I\rightarrow\mathcal{C}_{/B}.\]
\end{corollary}
\begin{proof}
The diagram $U\times_{B}V$ is the dependent composition
\[I\times I\xrightarrow{U^{\ast}V}(\mathcal{C}_{/B})_{/U}\xrightarrow{U_{\ast}}\mathcal{C}_{/B}.\]
For every $i\in I$ the base change $U_{i}^{\ast}V\colon I\rightarrow(\mathcal{C}_{/B})_{/U_i}$ is a descent diagram 
by Lemma~\ref{lemmadescdiagbase}.2. The statement hence follows from Lemma~\ref{lemmadescdepcomp}.
\end{proof}

\begin{lemma}[Cofinal equivalence]\label{lemmadesccofstable}
Suppose $\mathcal{C}$ has pullbacks. Suppose $U\colon I\rightarrow\mathcal{C}_{/B}$ and
$V\colon J\rightarrow\mathcal{C}_{/B}$ are cofinally equivalent pre-descent diagrams. 
Then $U$ is a (semi-)descent diagram if and only if $V$ is a (semi-)descent diagram.
\end{lemma}
\begin{proof}
Let $W\colon K\rightarrow\mathcal{C}_{/B}$ be an extension of both $U$ and $V$ along a pair of cofinal functors
$\phi\colon I\rightarrow K$ and $\psi\colon J\rightarrow K$. If $\mathcal{C}$ has all pullbacks, then $W$ is again 
a pre-descent diagram. By Lemma~\ref{lemmaresfunct} we obtain a diagram 
\[\xymatrix{
\mathrm{Desc}(U)& \mathrm{Desc}(W)\ar[l]_{\mathrm{res}_{\phi}}^{\simeq}\ar[r]^{\mathrm{res}_{\psi}}_{\simeq} & \mathrm{Desc}(V)\\
 & \mathcal{C}_{/B}\ar[u]|{\mathrm{res}_W}\ar[ul]^{\mathrm{res}_U}\ar[ur]_{\mathrm{res}_V} & 
}\]
in $\mathrm{Cat}_{\infty}$ whose top horizontal arrows are equivalences. It follows that $\mathrm{res}_U$ is fully 
faithful (an equivalence) if and only if $\mathrm{res}_V$ is fully faithful (an equivalence).

\end{proof}

\subsection{Structured colimit pre-topologies}\label{secsubstrcolimtop}

In Section~\ref{secbases} we imposed conditions on the family of sets
\[\mathrm{Cov}_{T}(B):=\{\mathrm{colim}yU\in\hat{\mathcal{C}}_{/yB}| U\in T(B)\}\]
for sets $T$ of diagrams $U\colon I\rightarrow\mathcal{C}_{/B}$ to control the localization
$\hat{\mathcal{C}}\rightarrow\hat{\mathcal{C}}[\mathrm{Cov}_{T}^{-1}]$ at the union
$\mathrm{Cov}_{T}=\bigcup_{B\in\mathcal{C}}\mathrm{Cov}_{T}(B)$. 
In this section, we express conditions on the set $T$ itself to do so. Therefore, we first set up some preliminary 
technical constructions.
Lemma~\ref{lemmacolimittop1} introduces for every suitable diagram $U\colon I\rightarrow\mathcal{C}_{/B}$ in an
$\infty$-category $\mathcal{C}$ a natural transformation $\phi_U$ of span-extensions over
$U\times_B U\colon I\times I\rightarrow\mathcal{C}_{/B}$. Its components
$\phi_U(i,j)\in\mathcal{C}_{/U_i\times_B U_j}$ for pairs $i,j\in I$ will be required to be an equivalence in the 
definition of a higher covering diagram (Definition~\ref{defcovfunctors}). Albeit entirely formal, it is the central 
construction of Section~\ref{secdesc} and hence will be introduced in due detail in the course of the following three 
lemmata.


Thus, let $I$ be a small $\infty$-category and consider the free span $D^1= (1\leftarrow 0\rightarrow 2)$ in the 
category of simplicial sets. The simplicial set $D^1$ is identical to the outer horn $\Lambda^2_0$; the 
alternate choice of notation will become evident in Section~\ref{secsubinftydesc}. The $\infty$-category $I^{D^1}$ of 
spans in $I$ fits into a cartesian square of the following form.
\begin{align}\label{diaglambda}
\begin{gathered}
\xymatrix{
I^{D^1}\ar[r]^(.45){\lambda_{I}}\ar@{->>}[d]_{(\mathrm{ev}_0,\mathrm{ev}_{(1,2)})}\ar@{}[dr]|(.3){\pbs} & (I^{\partial\Delta^1})^{\Delta^1}\ar@{->>}[d]^{(s,t)}\\
I\times I^{\partial\Delta^1}\ar[r]_(.45){(\mathsf{C},1)} & I^{\partial\Delta^1}\times I^{\partial\Delta^1}
}
\end{gathered}
\end{align}
The functor $\lambda_I$ simply maps a span $i\leftarrow k\rightarrow j$ to the arrow of pairs
$(k,k)\rightarrow (i,j)$.

\begin{lemma}\label{lemmalambda}
For every small $\infty$-category $I$ and every cocomplete $\infty$-category $\mathcal{C}$ the natural transformation 
$\lambda_I$ induces a 2-cell
\begin{align}\label{diagapp1}
\begin{gathered}
\xymatrix{
\mathrm{Fun}(I^{\partial\Delta^1},\mathcal{C})\ar@{=}[d]\ar[r]^(.55){\mathsf{C}^{\ast}} & \mathrm{Fun}(I,\mathcal{C})\ar[d]^{\mathrm{ev}_0^{\ast}}\ar@{}[dl]|{\Downarrow\mu}\\
\mathrm{Fun}(I^{\partial\Delta^1},\mathcal{C})\ar[r]_(.55){\mathrm{ev}_{(1,2)}^{\ast}} & \mathrm{Fun}(I^{D^1},\mathcal{C})\\
}
\end{gathered}
\end{align}
in the $(\infty,2)$-category $\mathrm{Cat}_{\infty}$ of $\infty$-categories that satisfies the Beck-Chevalley 
condition with respect to the associated global left Kan extensions. 
\end{lemma}
\begin{proof}
First, for a given cocomplete $\infty$-category $\mathcal{C}$, the global left Kan extensions along both
$\mathrm{ev}_0\colon I^{D^1}\rightarrow I$ and $\mathrm{ev}_{(1,2)} \colon I^{D^1}\rightarrow I^{\partial\Delta^1}$ 
exist by \cite[Proposition 4.3.3.10]{luriehtt}. 
The fact that the square (\ref{diaglambda}) is cartesian equivalently means that the associated 2-cell
\[\xymatrix{
I^{D^1}\ar[r]^{\mathrm{ev}_{(1,2)}}\ar[d]_{\mathrm{ev}_0}\ar@{}[dr]|{\underset{\lambda_{I}}{\Rightarrow}} & I^{\partial\Delta^1}\ar@{=}[d]\\
I\ar[r]_{\mathsf{C}} & I^{\partial\Delta^1}
}\]
is a comma square in the cartesian closed $\infty$-cosmos $\mathrm{Cat}_{\infty}$ \cite[Definition 2.3.1]{rvkanext}. 
It hence induces a 2-cell $\mu$ as in (\ref{diagapp1}) that satisfies the according Beck-Chevalley condition by
\cite[Proposition 5.3.9]{rvkanext}.
\end{proof}

Explicitly, the $2$-cell $\mu$ in Lemma~\ref{lemmalambda} is given by the composition $\mu$ in the square
\begin{align}\label{diagmu}
\begin{gathered}
\xymatrix{
\mathrm{Fun}((I^{\partial\Delta^1})^{\Delta^1},\mathcal{C}^{\Delta^1})\ar[rr]^{\lambda_I^{\ast}} & &  \mathrm{Fun}(I^{D^1},\mathcal{C}^{\Delta^1})\ar@{->>}[d]^{(s,t)}\\
\mathrm{Fun}(I^{\partial\Delta^1},\mathcal{C})\ar[rr]_(.4){(\mathrm{ev}_0^{\ast}\circ \mathsf{C}^{\ast},\mathrm{ev}_{(1,2)}^{\ast})}\ar[urr]^(.4){\mu}\ar[u]^{(\cdot)^{\Delta^1}} & & \mathrm{Fun}(I^{D^1},\mathcal{C})\times\mathrm{Fun}(I^{D^1},\mathcal{C}).
}
\end{gathered}
\end{align}


Furthermore, if $\mathcal{C}$ has finite products, there is a product functor $-\times -\colon\mathcal{C}\times\mathcal{C}\rightarrow\mathcal{C}$ together with an additional homotopy-cartesian square
\[\xymatrix{
\mathcal{C}^{D^1}\ar[r]^{\pi}\ar@{->>}[d]_{(\mathrm{ev}_0,\mathrm{ev}_{(1,2)})} & \mathcal{C}^{\Delta^1}\ar@{->>}[d]^{(s,t)} \\
\mathcal{C}\times\mathcal{C}^{\partial\Delta^1}\ar[r]_(.55){(1,-\times -)}& \mathcal{C}^{\partial\Delta^1}
}\]
where the top arrow $\pi$ maps a span of the form $C\leftarrow A\rightarrow B$ to the universal map
$A\rightarrow B\times C$. The composite natural transformation
\[\Delta\colon\mathcal{C}\xrightarrow{\mathsf{C}}\mathcal{C}^{D^1}\xrightarrow{\pi}\mathcal{C}^{\Delta^1}\]
takes an object $C\in\mathcal{C}$ to its diagonal $\Delta(C)\colon C\rightarrow C\times C$. 
For any $\infty$-category $I$, the push-forward with $\Delta\colon\mathcal{C}\rightarrow\mathcal{C}^{\Delta^1}$ 
induces a 2-cell
\[\xymatrix{
\mathrm{Fun}(I,\mathcal{C})\ar[rr]^{-\times -\circ(-)^{\partial\Delta^1}}\ar@{=}[d]\ar@{}[drr]|{\underset{\Delta_{\ast}}{\Rightarrow}} & & \mathrm{Fun}(I^{\partial\Delta^1},\mathcal{C})\ar[d]^{\mathsf{C}^{\ast}} \\
\mathrm{Fun}(I,\mathcal{C})\ar@{=}[rr] & & \mathrm{Fun}(I,\mathcal{C}),
}\]
which at a given functor $U\colon I\rightarrow\mathcal{C}$ is the natural transformation from $U$ to
$\mathsf{C}^{\ast}(U\times U)$ given at $i\in I$ by the according diagonal $U_i\rightarrow U_i\times U_i$.

\begin{lemma}\label{lemmapasting}
Let $\mathcal{C}$ and $I$ be $\infty$-categories, and suppose $\mathcal{C}$ has finite products.
The pasting $\mathrm{paste}(\mu,\Delta_{\ast}):=(-\times -\circ(-)^{\partial\Delta^1})^{\ast}\mu\circ(\mathrm{ev}_0^{\ast})_{\ast}\Delta_{\ast}$ of the
2-cells
\begin{align}\label{diagpasting}
\begin{gathered}
\xymatrix{
\mathrm{Fun}(I,\mathcal{C})\ar[rr]^{-\times -\circ(-)^{\partial\Delta^1}}\ar@{=}[d]\ar@{}[drr]|{\underset{\Delta_{\ast}}{\Rightarrow}} & & \mathrm{Fun}(I^{\partial\Delta^1},\mathcal{C})\ar[d]^{\mathsf{C}^{\ast}}\ar@{=}[r]\ar@{}[dr]|{\underset{\mu}{\Rightarrow}}  & \mathrm{Fun}(I^{\partial\Delta^1},\mathcal{C})\ar[d]^{\mathrm{ev}_{(1,2)}^{\ast}} \\
\mathrm{Fun}(I,\mathcal{C})\ar@{=}[rr] & & \mathrm{Fun}(I,\mathcal{C})\ar[r]_{\mathrm{ev}_0^{\ast}} & \mathrm{Fun}(I^{D^1},\mathcal{C}),
}
\end{gathered}
\end{align}
computes pointwise the composition
\[\mathrm{Fun}(I,\mathcal{C})\xrightarrow{(-)^{D^1}}\mathrm{Fun}(I^{D^1},\mathcal{C}^{D^1})\xrightarrow{\pi_{\ast}}\mathrm{Fun}(I^{D^1},\mathcal{C}^{\Delta^1}).\]
\end{lemma}
\begin{proof}
As limits in functor $\infty$-categories are computed pointwise, the $\infty$-category $\mathrm{Fun}(I,\mathcal{C})$ 
has finite products. Given a diagram $U\colon I\rightarrow\mathcal{C}$, the push-forward
$\Delta_{\ast}U\colon U\rightarrow\mathsf{C}^{\ast}(U\times U)$ is exactly the diagonal of $U$ in the
$\infty$-category $\mathrm{Fun}(I,\mathcal{C})$. It follows that the natural transformation
$\mathrm{ev}_0^{\ast}(\Delta_{\ast}(U))\simeq\mathrm{ev}_0^{\ast}(\Delta(U))$ is the diagonal of $\mathrm{ev}_0^{\ast}(U)$ in $\mathrm{Fun}(I^{D^1},\mathcal{C})$.

Analogously, the functor $\infty$-category $\mathrm{Fun}(I^{D^1},\mathcal{C})$ has finite products, and
the natural transformation $\pi_{\ast}(U^{D^1})$ is the value of
$\pi\colon\mathrm{Fun}(I^{D^1},\mathcal{C})^{D^1}\rightarrow\mathrm{Fun}(I^{D^1},\mathcal{C})^{\Delta^1}$ at the span 
$U[D^1]:=(U\mathrm{ev}_1\leftarrow U\mathrm{ev}_0\rightarrow U\mathrm{ev}_2)$. We are hence to show that the triangle
\[\xymatrix{
U\mathrm{ev}_0\ar@/^/[rr]^{\Delta(U\mathrm{ev}_0)}\ar@/_/[dr]_(.4){\pi(U[D^1])} & & U\mathrm{ev}_0\times U\mathrm{ev}_0\ar@/^/[dl]^(.4){\mu(U\times U)} \\
 & U\mathrm{ev}_1\times U\mathrm{ev}_2 & 
}\]
in $\mathrm{Fun}(I^{D^1},\mathcal{C})$ commutes. By definition of $\pi(U[D^1])$, it suffices to show that it does so 
after postcomposition with the projections $U\mathrm{ev}_1\times U\mathrm{ev}_2\rightarrow U\mathrm{ev}_i$. The fact 
that the composition
\[U\mathrm{ev}_0\xrightarrow{\Delta(U\mathrm{ev}_0)}U\mathrm{ev}_0\times U\mathrm{ev}_0\xrightarrow{\mu(U\times U)}U\mathrm{ev}_1\times U\mathrm{ev}_2\rightarrow U\mathrm{ev}_i\]
is exactly the canonical map $U(\mathrm{ev}_{0}\rightarrow\mathrm{ev}_{i})$ follows from unfolding the definition of $\mu$ in terms of $\lambda_I$ in (\ref{diagmu}), from naturality of $\lambda$ in (\ref{diaglambda}) with regards to 
diagrams $U\colon I\rightarrow\mathcal{C}$, and the definition of $\lambda_{\mathcal{C}}$.

%
\end{proof}


Furthermore, whenever $\mathcal{C}$ is both cocomplete and has finite products, 
the composite cocone
\begin{align}\label{equapp1}
I\ast\Delta^0\xrightarrow{\mathsf{C}\ast\Delta^0}I^{\partial\Delta^1}\ast\Delta^0\xrightarrow{\mathrm{colim}(U\times U)}\mathcal{C}
\end{align}
induces a natural map 
\[\gamma\colon\mathrm{colim}_I(\mathsf{C}^{\ast}(U\times U))\rightarrow\mathrm{colim}_{I\times I} (U\times U)\]
in $\mathcal{C}$.

\begin{notation}\label{notationextfun}
Given an $\infty$-category $\mathcal{C}$, a functor $F\colon I\rightarrow J$ of $\infty$-categories, and a 
diagram $U\colon I\rightarrow \mathcal{C}$, we refer to the homotopy-fiber
\[\xymatrix{
\mathrm{Fun}_U(J,\mathcal{C})\ar[r]\ar[d]\ar@{}[dr]|(.3){\pbs} & \mathrm{Fun}(J,\mathcal{C})\ar[d]^{F^{\ast}} \\  
\ast\ar[r]_(.4){\ulcorner U\urcorner} & \mathrm{Fun}(I,\mathcal{C})
}\]
as the $\infty$-category of $U$-lifts to $J$ along $F$.
\end{notation}

\begin{lemma}\label{lemmacolimittop1}
Let $U\colon I\rightarrow\mathcal{C}_{/B}$ be a small decomposable diagram such 
that for all $i,j\in I$ the diagram
\begin{align}\label{equapp3}
\mathrm{Fun}_{(i,j)}(D^1,I)\xrightarrow{U_{\ast}}\mathrm{Fun}_{(U_i,U_j)}(D^1,\mathcal{C}_{/B})\xrightarrow[\simeq]{\pi|_{(U_i,U_j)}}\mathcal{C}_{/U_i\times_B U_j}
\end{align}
has a colimit in $\mathcal{C}_{/B}$. Then there is a natural transformation
\[\phi_U\colon\mathrm{Lan}_{\mathrm{ev}_{(1,2)}}(\mathrm{ev}_0^{\ast}(U))\rightarrow U\times_B U\]
which can be computed pointwise as the natural map
\begin{align}\label{equapp2}
\phi_U(i,j)\colon
\underset{i\xleftarrow{\alpha} k\xrightarrow{\beta} j}{\mathrm{colim}}U_k
\xrightarrow[\underset{i\xleftarrow{\alpha} k\xrightarrow{\beta} j}{\mathrm{colim}}(U\alpha,U\beta)]{}
U_i\times_B U_j
\end{align}
that represents the cocone (\ref{equapp3}). It comes together with a 2-cell in $\mathcal{C}_{/B}$ of the form
\begin{align}\label{equapp4}
\begin{gathered}
\xymatrix{
\mathrm{colim}_{I\times I}\left(\mathrm{Lan}_{\mathrm{ev}_{(1,2)}}(\mathrm{ev}_0^{\ast}(U))\right)\ar[r]^(.7){\simeq}\ar@/_1pc/[ddr]_{\mathrm{colim}_{I\times I}\phi_U\mbox{ }} & \mathrm{colim}_I U\ar[d]^{\mathrm{colim}_I\Delta_{\ast}(U)} \\
 & \mathrm{colim}_I(\mathsf{C}^{\ast}(U\times_B U))\ar[d]^{\gamma} \\
 & \mathrm{colim}_{I\times I}(U\times_B U)
}
\end{gathered}
\end{align}
whenever all colimits therein exist. Furthermore, whenever the canonical map
$\mathrm{colim}_{I\times I}(U\times_B U)\rightarrow\mathrm{colim}_IU\times_B\mathrm{colim}_IU$ is an equivalence, the 
vertical composition $\gamma\circ\mathrm{colim}_I \Delta_{\ast}(U)$ is equivalent to the diagonal of
$\mathrm{colim}_IU$.
\end{lemma}

\begin{proof}
Let us first assume that $\mathcal{C}$ is cocomplete and has finite products, and that $B$ is the terminal object in $\mathcal{C}$. We thus may identify $\mathcal{C}_{/B}\simeq\mathcal{C}$.
Consider the following pasting diagram in the $\infty$-cosmos $\mathrm{Cat}_{\infty}$ of small $\infty$-categories.
\begin{align}\label{diagapp4}
\begin{gathered}
\xymatrix{
\mathrm{Fun}(I^{\partial\Delta^1},\mathcal{C})\ar@{=}[r]\ar@/_/[ddr]_(.2){\mathsf{C}^{\ast}}\ar@/_/[dr]^{\mathsf{C}^{\ast}}\ar@{}[dr]|{\hspace{1.8cm}\overset{\Rightarrow}{\epsilon_{\mathsf{C}}}} & \mathrm{Fun}(I^{\partial\Delta^1},\mathcal{C})\ar@{=}[rr]\ar@{=}@/_/[dr]\ar@/_/[ddr]|(.45)\hole|(.5)\hole|(.55)\hole_(.2){\mathsf{C}^{\ast}} & & \mathrm{Fun}(I^{\partial\Delta^1},\mathcal{C})\ar@{=}@/_/[dr]\ar@{=}@/_/[ddr]|(.45)\hole & \\
 & \mathrm{Fun}(I,\mathcal{C})\ar[r]_(.45){\mathrm{Lan}_{\mathsf{C}}}\ar@{=}[d] & \mathrm{Fun}(I^{\partial\Delta^1},\mathcal{C})\ar@{=}[r]\ar[d]^{\mathsf{C}^{\ast}} & \mathrm{Fun}(I^{\partial\Delta^1},\mathcal{C})\ar@{=}[r]\ar[d]^{\mathrm{ev}_{(1,2)}^{\ast}} & \mathrm{Fun}(I^{\partial\Delta^1},\mathcal{C})\ar@{=}[d] \\
 & \mathrm{Fun}(I,\mathcal{C})\ar@{=}[r]\ar@{}[ur]|{\overset{\Rightarrow}{\eta_{\mathsf{C}}}} & \mathrm{Fun}(I,\mathcal{C})\ar[r]_{\mathrm{ev}_0^{\ast}}\ar@{}[ur]|{\overset{\Rightarrow}{\mu}} & \mathrm{Fun}(I^{D^1},\mathcal{C})\ar[r]_{\mathrm{Lan}_{\mathrm{ev}_{}(1,2)}}\ar@{}[ur]|{\overset{\Rightarrow}{\epsilon_{(1,2)}}} & \mathrm{Fun}(I^{\partial\Delta^1},\mathcal{C})
}
\end{gathered}
\end{align}
The left half of the diagram is filled by the 3-cell given by the triangle identity of the adjunction
$\mathrm{Lan}_{\mathsf{C}}\adj\mathsf{C}^{\ast}$. The right half of the diagram is filled by the obvious degenerate 
3-cell associated to the 2-cell $\epsilon_{(1,2)}\circ\mu$.

The composition of the front square 2-cells $\kappa:=\epsilon_{(1,2)}\circ\mu\circ\eta_{\mathsf{C}}$ is 
the mate of $\mu$ in (\ref{diagapp1}), and hence is a natural equivalence by virtue of the associated 
Beck-Chevalley condition (Lemma~\ref{lemmalambda}). We obtain two homotopy-commutative squares in
$\mathrm{Fun}(I^{\partial\Delta^1},\mathcal{C})$ as follows.
\begin{align}\label{diagapp4.2}
\begin{gathered}
\xymatrix{
\mathrm{Lan}_{\mathrm{ev}_{(1,2)}}(\mathrm{ev}_0^{\ast}(U))\ar[rr]^(.55){\simeq}_(.55){\kappa_U}\ar[d]_{\mathrm{Lan}_{\mathrm{ev}_{(1,2)}}(\mathrm{ev}_0^{\ast}(\Delta_{\ast}(U)))} & & \mathrm{Lan}_{\mathsf{C}}(U)\ar[d]^{\mathrm{Lan}_{\mathsf{C}}(\Delta_{\ast}(U))} \\
\mathrm{Lan}_{\mathrm{ev}_{(1,2)}}(\mathrm{ev}_0^{\ast}(\mathsf{C}^{\ast}(U\times U)))\ar[rr]^(.57){\simeq}_(.57){\kappa_{\mathsf{C}^{\ast}(U\times U)}}\ar[d]_{\mathrm{Lan}_{\mathrm{ev}_{(1,2)}}(\mu_{U\times U})} & & \mathrm{Lan}_{\mathsf{C}}(\mathsf{C}^{\ast}(U\times U))\ar[d]^{\epsilon_{\mathsf{C}}(U\times U)} \\
\mathrm{Lan}_{\mathrm{ev}_{(1,2)}}(\mathrm{ev}_{(1,2)}^{\ast}(U\times U))\ar[rr]_(.65){\epsilon_{(1,2)}(U\times U)} & & U\times U 
}
\end{gathered}
\end{align}

The upper square commutes, because
$\kappa\colon\mathrm{Fun}(I,\mathcal{C})\rightarrow\mathrm{Fun}(I^{\partial\Delta^1},\mathcal{C}^{\Delta^1})$ is a 
natural equivalence. The 2-cell making the bottom square commute comes from the composite 3-cell (\ref{diagapp4}) 
applied to $U\times U\in\mathrm{Fun}(I^{\partial\Delta^1},\mathcal{C})$. The bottom-left composite defines the 
natural transformation
\[\phi_U\colon\mathrm{Lan}_{\mathrm{ev}_{(1,2)}}(\mathrm{ev}_0^{\ast}(U))\rightarrow U\times U .\]
Let us first construct the associated 2-cell in (\ref{equapp4}) of colimits in $\mathcal{C}$. Thus, push-forward of 
(\ref{diagapp4.2}) along the colimit functor
$\mathrm{colim}\colon\mathrm{Fun}(I^{\partial\Delta^1},\mathcal{C})\rightarrow\mathcal{C}$ yields the (left hand side 
triangle of the) following diagram in $\mathcal{C}$.
\[\xymatrix{
\mathrm{colim}(\mathrm{Lan}_{\mathrm{ev}_{(1,2)}}(\mathrm{ev}_0^{\ast}(U)))\ar[r]^(.55){\simeq}_(.55){\mathrm{colim}\kappa_U}\ar@/_1pc/[ddr]_{\mathrm{colim \phi_U}} & \mathrm{colim}(\mathrm{Lan}_{\mathsf{C}}U)\ar[d]^{\mathrm{colim}(\mathrm{Lan}_{\mathsf{C}}(\Delta_{\ast}(U)))} \ar[r]^{\simeq} & \mathrm{colim}U\ar[d]^{\mathrm{colim}(\Delta_{\ast}(U))}\\
 & \mathrm{colim}(\mathrm{Lan}_{\mathsf{C}}(\mathsf{C}^{\ast}(U\times U)))\ar[d]^{\mathrm{colim}(\epsilon_{\mathsf{C}}(U\times U))}\ar[r]^(.55){\simeq} & \mathrm{colim}(\mathsf{C}^{\ast}(U\times U))\ar@/^1pc/[dl]^(.4){\gamma}  \\
 & \mathrm{colim}(U\times U)  &
}\]
Here, the upper right square comes from the fact that global left Kan extensions commute with the respective colimit 
functors. The lower right triangle commutes by definition of $\gamma$, as the vertical arrow
$\mathrm{colim}(\epsilon_{\mathsf{C}}(U\times U))$ represents exactly to the cocone (\ref{equapp1}).

Second, to compute $\phi_U$ pointwise, for $(i,j)\in I\times I$ consider the following diagram of
$\infty$-categories.
\begin{align}\label{diagapp6}
\begin{gathered}
\xymatrix{
 & & \mathcal{C}_{/U_i\times U_j}\ar@{->>}@{^(->}[r] & \mathcal{C}^{\Delta^1}\ar[dd]^{s}\\
 & \mathrm{Fun}_{(U_i,U_j)}(D^1,\mathcal{C})\ar@{^(->}[r]\ar@/^/[ur]_{\pi|_{(U_i,U_j)}}^{\simeq} & \mathcal{C}^{D^1}\ar@/^/[dr]^{\mathrm{ev}_0}\ar@/^/[ur]_{\pi} & \\
\mathrm{Fun}_{(i,j)}(D^1,I)\ar@{^(->}[r]^{\iota}\ar@/^/[ur]^(.35){U_{\ast}}\ar@{->>}[d]\ar@{}[dr]|(.3){\pbs} & I^{D^1}\ar[r]^{\mathrm{ev}_0}\ar@{->>}[d]_{\mathrm{ev}_{(1,2)}}\ar@/^/[ur]^(.35){U_{\ast}}\ar@{}[dr]|{\hspace{3pc}\Downarrow\mathrm{paste}(\mu,\Delta_{\ast})} & I\ar[r]^U & \mathcal{C} \\
\ast\ar[r]_{\ulcorner(i,j)\urcorner} & I^{\partial\Delta^1}\ar@/_2pc/[urr]_{U\times U} & & 
}
\end{gathered}
\end{align}
The 2-cell $\phi_U\colon\mathrm{Lan}_{(1,2)}(\mathrm{ev}_0^{\ast}U)\rightarrow U\times U$ is by definition the 
transpose of the pasting $\mathrm{paste}(\mu,\Delta_{\ast})(U)$.
It follows that the precomposition
\[\ulcorner\phi_U(i,j)\urcorner\simeq\ulcorner(i,j)\urcorner^{\ast}\phi_U\colon\mathrm{colim}((\mathrm{ev}_0\iota)^{\ast}U)\rightarrow U_i\times U_j\]
is the transpose of the cocone $\iota^{\ast}(\mathrm{paste}(\mu,\Delta_{\ast}))(U)\colon(\mathrm{ev}_0\iota)^{\ast}U\rightarrow U_i\times U_j$. 
The 2-cell $\mathrm{paste}(\mu,\Delta_{\ast})$ itself is equivalent to the composition $\pi\circ U_{\ast}$ on the 
upper half of Diagram~(\ref{diagapp6}) by Lemma~\ref{lemmapasting}. 
Hence, the cocone $\iota^{\ast}(\mathrm{paste}(\mu,\Delta_{\ast}))(U)$ is equivalent to the restriction
\[\pi|_{(U_i,U_j)}\circ U_{\ast}\colon\mathrm{Fun}_{(i,j)}(D^1,I)\rightarrow\mathcal{C}_{/U_i\times U_j}.\]
This finishes the proof in case $\mathcal{C}=\mathcal{C}_{/B}$ is cocomplete and has finite products.

Now, suppose $U\colon I\rightarrow\mathcal{C}_{/B}$ is a decomposable diagram respective a general $\infty$-category
$\mathcal{C}$ such that the diagrams $\pi|_{(U_i,U_j)}\circ U_{\ast}$ have a colimit for all $i,j\in I$. Localization 
of the presheaf $\infty$-category $\widehat{\mathcal{C}_{/B}}$ at the set
\[\{\mathrm{colim}(y\pi|_{(U_i,U_j)}\circ U_{\ast})\rightarrow y(\mathrm{colim}(\pi|_{(U_i,U_j)}\circ U_{\ast}))\mid i,j\in I\}.\]
yields a presentable $\infty$-category $\mathcal{D}$ together with a left exact fully faithful inclusion
$y\colon\mathcal{C}\hookrightarrow\mathcal{D}$. We obtain a natural transformation 
\[\phi_{yU}\colon\mathrm{Lan}_{\mathrm{ev}_{(1,2)}}(\mathrm{ev}_0^{\ast}(yU))\rightarrow yU\times_{yB} yU\]
pointwise computed as in (\ref{equapp2}) together with a 2-cell (\ref{equapp4}) in $\mathcal{D}$ by the above.
As every component of the natural transformation $\phi_{yU}$ lies in the 
essential image of $y\colon\mathcal{C}\hookrightarrow\mathcal{D}$, we obtain a natural transformation
$\phi_U\colon\mathrm{Lan}_{\mathrm{ev}_{(1,2)}}(\mathrm{ev}_0^{\ast}(U))\rightarrow U\times_{B} U$ as stated. 
Whenever $\mathcal{C}$ has all colimits occurring in Diagram (\ref{equapp4}), we may localize
$\mathcal{D}$ furthermore at the according sets of colimit-comparison maps and repeat the same argument to deduce the 
existence of the 2-cell (\ref{equapp4}) in $\mathcal{C}_{/B}$.
 
Lastly, suppose the canonical map
$\mathrm{colim}_{I\times I}(U\times_B U)\rightarrow\mathrm{colim}_IU\times_B\mathrm{colim}_IU$ is an equivalence. 
It follows that the vertical composition $\gamma\circ\mathrm{colim}_I\Delta_{\ast}(U)$ is equivalent to the 
diagonal of $\mathrm{colim}_I U$, because both
\[\xymatrix{
\mathrm{colim}_I U\ar[rrr]^(.4){\gamma\circ\mathrm{colim}_I\Delta_{\ast}(U)} & & & \mathrm{colim}_{I\times I}(U\times U)\ar@<.5ex>[r]^(.6){\pi_1}\ar@<-.5ex>[r]_(.6){\pi_2}& \mathrm{colim}_I U
}\]
compose to the identity (as is easily verified on associated cocones).
\end{proof}

\begin{notation}
For a decomposable diagram $U\colon I\rightarrow\mathcal{C}_{/B}$ and a pair of objects $i,j\in I$, the associated 
diagram
\[\Pi_U(i,j)\colon\mathrm{Fun}_{(i,j)}(D^1,I)\xrightarrow{U_{\ast}}\mathrm{Fun}_{(U_i,U_j)}(D^1,\mathcal{C}_{/B})\xrightarrow[\simeq]{\pi|_{(U_i,U_j)}}\mathcal{C}_{/U_i\times_{B} U_j}\]
that occurs in (\ref{equapp3}) will be referred to as the \emph{pre-diagonal} of $U$ (over $i$ and $j$).
\end{notation}

The following definition once more makes use of the notion of cofinal equivalence defined in 
Definition~\ref{defcofequiv}.

\begin{definition}\label{defcolimpretop}
Let $\mathcal{C}$ be an $\infty$-category and $T=\{T(B)\mid B\in\mathcal{C}\}$ be a class of diagrams of type
$I\rightarrow\mathcal{C}_{/B}$ for $B\in\mathcal{C}$ and $I\in\mathrm{Cat}_{\infty}$.
\begin{enumerate}
\item $T$ is \emph{(cofinally) reflexive} if for every object $B\in\mathcal{C}$ there is a diagram
$U\colon I\rightarrow\mathcal{C}_{/B}$ in $T(B)$ that is cofinally equivalent to the functor
$\Delta^0\xrightarrow{\{1_B\}}\mathcal{C}_{/B}$.
\item A class $T$ of decomposable diagrams is \emph{(cofinally) pre-diagonally closed} if for every diagram
$U\colon I\rightarrow\mathcal{C}_{/B}$ in $T$ and all $i,j\in I$ there is a diagram in $T(B)$ that is cofinally 
equivalent to the pre-diagonal
\[\Pi_U(i,j)\colon\mathrm{Fun}_{(i,j)}(D^1,I)\rightarrow\mathcal{C}_{/U_i\times_{C} U_j}.\]
\end{enumerate}
Say $T$ is a \emph{structured colimit pre-topology} if it is a reflexive and pre-diagonally closed semi-descent 
class. If $\mathcal{C}$ is small, say $T$ is small if $T$ is a set.
\end{definition}

For the following theorem we recall that a localization $L\colon\hat{\mathcal{C}}\rightarrow\mathcal{E}$ 
is said to be semi-left exact if $L$ preserves pullbacks along maps that are contained in $\mathcal{E}$ (considered 
as a full sub-$\infty$-category of $\hat{\mathcal{C}}$).

\begin{theorem}\label{thmcolimtop}
Let $\mathcal{C}$ be a small $\infty$-category, and let $T$ be a small structured colimit pre-topology (a reflexive 
semi-descent class) in $\mathcal{C}$. For $B\in\mathcal{C}$ let 
\[\mathrm{Cov}_{T}(B):=\{\mathrm{colim}yU\in\hat{\mathcal{C}}_{/yB}\mid U\in T(B)\},\]
and let $\mathrm{Cov}_{T}$ be the union of all $\mathrm{Cov}_{T}(B)$ considered as a set of maps in
$\hat{\mathcal{C}}$. Then the accessible localization $\hat{\mathcal{C}}\rightarrow\hat{\mathcal{C}}[\mathrm{Cov}_{T}^{-1}]$ is (semi-)left exact.
\end{theorem}
\begin{proof}
We show that $\mathrm{Cov}_T$ is a modulator on $\mathcal{C}$ whenever $T$ is a reflexive semi-descent class, and 
that furthermore $\mathrm{Cov}_T\cup\Delta[\mathrm{Cov}_T]\subset\mathrm{Sat}(\mathrm{Cov}_T)$ whenever $T$ is a 
structured colimit pre-topology. The statement then follows from Theorem~\ref{thmlexlocgen} (and for example from 
\cite[Proposition 1.3.2]{as_soa} in the case of semi-left exactness). The fact that the set $\mathrm{Cov}_T$ contains 
the identities $1_{yB}$ is precisely 
given by the fact that $T$ is reflexive. The fact that the set $\mathrm{Cov}_T$ is a modulator follows directly from 
the fact that $T$ is a semi-descent class, that the Yoneda embedding preserves pullbacks, and that all colimits in
$\hat{\mathcal{C}}$ are universal. Given a diagram $U\colon I\rightarrow\mathcal{C}_{/B}$ in $T$, we 
are left to show that the diagonal of the associated object $\mathrm{colim}yU\in\hat{\mathcal{C}}_{/yB}$ is contained 
in the saturation $\mathrm{Sat}(\mathrm{Cov}_T)$ whenever $T$ is pre-diagonally closed. As
$\mathrm{Sat}(\mathrm{Cov}_T)$ is closed under colimits and contains $\mathrm{Cov}_T$, it suffices to show that 
the diagonal
\[\Delta_{yB}(\mathrm{colim}yU)\colon\mathrm{colim}yU\rightarrow\mathrm{colim}yU\times_{yB}\mathrm{colim}yU\]
is a colimit of objects in $\mathrm{Cov}_T$. As $\hat{\mathcal{C}}_{/yB}$ has all small limits and small universal
colimits, by Lemma~\ref{lemmacolimittop1} it suffices to show that the natural transformation
\[\phi_{yU}(i,j)\colon
\underset{i\xleftarrow{\alpha} k\xrightarrow{\beta} j}{\mathrm{colim}}yU_k
\xrightarrow[\underset{i\xleftarrow{\alpha} k\xrightarrow{\beta} j}{\mathrm{colim}}(yU\alpha,yU\beta)]{}
yU_i\times_{yB} yU_j
\]
is contained in $\mathrm{Cov}_T$ for all $i,j\in I$. As stated in (\ref{equapp3}), each map $\phi_{yU}(i,j)$ 
represents the cocone
\[\Pi_{yU}(i,j)\colon\mathrm{Fun}_{(i,j)}(D^1,I)\xrightarrow{\Pi_U(i,j)}\mathcal{C}_{/U_i\times_C U_j}\xrightarrow{y}\hat{\mathcal{C}}_{/yU_i\times_{yB} yU_j}.\]
By definition of $\mathrm{Cov}_T$ it therefore suffices to show that each pre-diagonal $\Pi_U(i,j)$ is contained in 
$T$. This is precisely given by the assumption that $T$ is pre-diagonally closed.
\end{proof}


\begin{definition}\label{deftopsheaves}
Given a small $\infty$-category $\mathcal{C}$ and a small structured colimit topology $T$ in $\mathcal{C}$, we refer to the $\infty$-topos
\[\mathrm{Sh}_{T}(\mathcal{C}):=\hat{\mathcal{C}}[\mathrm{Cov}_{T}^{-1}]\]
as the $\infty$-topos of $T$-sheaves on $\mathcal{C}$. 
\end{definition}

By construction, a presheaf
$X\colon\mathcal{C}^{op}\rightarrow\mathcal{S}$ is a sheaf for $T$-diagrams if and only if it takes colimits of 
diagrams in $T$ to limits of spaces.

\begin{examples}\label{remexplescolimittop}
Let $\mathcal{C}$ be a small $\infty$-category and $T$ be a small structured colimit pre-topology in $\mathcal{C}$. 
\begin{enumerate}
\item Every representable presheaf over $\mathcal{C}$ is a $T$-sheaf. In other words, the localization
$\hat{\mathcal{C}}\rightarrow\mathrm{Sh}_{T}(\mathcal{C})$ is sub-canonical. 
\item Suppose $\mathcal{C}$ has pullbacks. Then the canonical indexing
$(\mathcal{C}_{/-})^{\simeq}\colon\mathcal{C}^{op}\rightarrow\mathcal{S}$ is a $T$-sheaf precisely if $T$ is a class 
of descent diagrams. 
\end{enumerate}
\end{examples}

\begin{corollary}\label{cordescentdiaganel}
Every small structured colimit pre-topology $T$ on a small $\infty$-category $\mathcal{C}$ is a descent class.
\end{corollary}
\begin{proof}
Apply Lemma~\ref{lemmadescentdiaganel} to the embedding $y\colon\mathcal{C}\rightarrow\mathrm{Sh}_T(\mathcal{C})$ 
from Theorem~\ref{thmcolimtop}. As $\mathrm{Sh}_T(\mathcal{C})$ is an $\infty$-topos, the class of all colimiting 
diagrams in $\mathrm{Sh}_T(\mathcal{C})$ is a descent class.
\end{proof}

 \subsection{Pre-diagonal stability of diagrams}\label{secsubinftydesc}

Let $\mathcal{C}$ be an $\infty$-category and let $\mathrm{SDesc}(\mathcal{C})$ denote the class of all semi-descent 
diagrams of type $U\colon I\rightarrow\mathcal{C}_{/B}$ for some small $\infty$-category $I$ and some object 
$B\in\mathcal{C}$. 
The class $\mathrm{SDesc}(\mathcal{C})$ is clearly the largest reflexive semi-descent class in $\mathcal{C}$, 
however it is generally not a structured colimit pre-topology. Yet, it contains every structured colimit pre-topology 
on $\mathcal{C}$ by definition.
In this section we characterize a structured colimit pre-topology $\mathrm{SDesc}_{\infty}(\mathcal{C})$ on
$\mathcal{C}$ which in the next section is proven to give rise to the largest such in a suitable context 
(Proposition~\ref{corhcdiscanonical}).


For the following, let $S^{\infty}$ be the poset generated by the diagram
\[\xymatrix{
x_0 & x_1\ar[l]\ar[dl] & x_2\ar[l]\ar[dl] & \dots\ar[l]\ar[dl]\\
y_0 & y_1\ar[l]\ar[ul]|(.5)\hole & y_2\ar[l]\ar[ul]|(.5)\hole & \dots\ar[l]\ar[ul]|(.5)\hole
}\]
Let $S^n$ be the truncation of $S^{\infty}$ at stage $n$, and $D^{n+1}$ be the join $\Delta^0\ast S^n$; that is, the 
poset given as follows.
\[\xymatrix{
x_0 & x_1\ar[l]\ar[dl] & x_2\ar[l]\ar[dl] & \dots\ar[l]\ar[dl] & x_n\ar[l]\ar[dl] & x_{n+1}\ar[l]\ar[dl]\\
y_0 & y_1\ar[l]\ar[ul]|(.5)\hole & y_2\ar[l]\ar[ul]|(.5)\hole & \dots\ar[l]\ar[ul]|(.5)\hole & y_n\ar[l]\ar[ul]|(.5)\hole & 
}\]
Note that for all $n\geq 0$,
\begin{align}\label{equthmcovtop}
S^{n+1} & \cong S^0\ast S^n\cong S^n\ast S^0\cong D^{n+1}\cup_{S^n} D^{n+1}, \\
\notag D^{n+1} & \cong D^n\ast S^0.
\end{align}
We obtain canonical inclusions $\iota\colon S^n\hookrightarrow S^m$ for $n\leq m$,
$\iota_n\colon S^n\hookrightarrow D^{n+1}$ given by the obvious inclusions $S^n\hookrightarrow S^m\ast S^n$, 
$S^n\hookrightarrow \Delta^0\ast S^n$. We furthermore obtain inclusions $\iota^{+}\colon S^n\rightarrow S^{n+1}$ and 
$\iota^{+}\colon D^n\rightarrow D^{n+1}$ given by the obvious inclusions $S^n\hookrightarrow S^n\ast S^0$ and 
$D^n\hookrightarrow D^n\ast S^0$.
  
\begin{notation}
Given an $\infty$-category $I$, we will refer to a functor of the form $p\colon S^n\rightarrow I$ as an
$n$-dimensional pod in $I$.


\end{notation}

For any pod $p\colon S^n\rightarrow I$ in an $\infty$-category $I$, the
$\infty$-category $\mathrm{Fun}_p(D^{n+1},I)$ is equivalent to the over-category $I_{/p}$ defined in
\cite[Section 1.2.9]{luriehtt}. Furthermore, given a pod $p\colon S^n\rightarrow I$ in a small $\infty$-category $I$, and given a diagram $U\colon I\rightarrow\mathcal{C}_{/B}$, the square
\[\xymatrix{
\mathrm{Fun}_p(D^{n+1},I)\ar[d]_{U_{\ast}}\ar[r]^(.7){\mathrm{ev}_{x_{n+1}}} & I\ar[d]^(.4)U \\
\mathrm{Fun}_{Up}(D^{n+1},\mathcal{C}_{/B})\ar[r]_(.7){\mathrm{ev}_{x_{n+1}}} & \mathcal{C}_{/B} 
}\]
commutes. The bottom left vertex $\mathrm{Fun}_{Up}(\Delta^0\ast S^n,\mathcal{C}_{/B})$ is the $\infty$-category of 
cones over the composite pod $Up$, and as such is equivalent to the over-category
$(\mathcal{C}_{/B})_{/\mathrm{lim}Up}\simeq\mathcal{C}_{/\mathrm{lim}Up}$ whenever the limit exists. Note here that 
this limit of $Up$ is computed in the slice $\mathcal{C}_{/B}$ however, not in $\mathcal{C}$. In particular, the 
limit of $Up\colon S^n\rightarrow\mathcal{C}_{/B}$ is the fiber product $U_i\times_B U_j$ whenever $p=(i,j)$ is
$0$-dimensional. Thus, in the case $n=0$, the diagram
\[\mathrm{Fun}_{(i,j)}(D^1,I)\xrightarrow{U_{\ast}}\mathrm{Fun}_{(U_i,U_j)}(D^1,\mathcal{C}_{/B})\xrightarrow{\simeq}\mathcal{C}_{/U_i\times_{B} U_j}\]
is exactly the pre-diagonal $\Pi_U(i,j)$ for any pod $p=(i,j)\colon S^0\rightarrow I$ whenever $U$ is 
decomposable.
More generally, whenever the according pullbacks exist, the limit of $Up\colon S^n\rightarrow\mathcal{C}_{/B}$ for 
pods $p$ of any dimension $n$ can be computed as follows.

\begin{lemma}\label{lemmadiaglimit}
For any diagram $U\colon I\rightarrow\mathcal{C}_{/B}$ and any pod $p$ in $I$ of dimension $n$ such that for all
$0\leq m\leq n$ the functor $\Pi^m_U(p|_{S^m})\colon\mathrm{Fun}_{p|_{S^m}}(D^{m+1},I)\rightarrow\mathcal{C}_{/\mathrm{lim}Up|_{S^m}}$ is decomposable, the limit of the composition $Up\colon S^n\rightarrow\mathcal{C}_{/B}$ is 
the iterated pullback
\begin{align}\label{defcovfunctorsitdpb}
U(p(x_n))\times_{\left(U(p(x_{n-1}))\times_{\vdots_{U(p(x_0))\times_{B}U(p(y_0))}}U(p(y_{n-1}))\right)}U(p(y_n)).
\end{align}
\end{lemma}
\begin{proof}
Induction along the dimension $n$. 
\end{proof}

\begin{definition}\label{defcovfunctors}
Let $\mathcal{C}$ be an $\infty$-category. A semi-descent diagram $U\colon I\rightarrow\mathcal{C}_{/B}$ is
\emph{pre-diagonally stable} if for all $n\geq 0$ and all pods $p\colon S^n\rightarrow I$ the
\emph{$n$-th pre-diagonal}
\begin{align}\label{equdefcovfunctors}
\Pi^n_U(p)\colon\mathrm{Fun}_p(D^{n+1},I)\xrightarrow{U_{\ast}}\mathrm{Fun}_{Up}(D^{n+1},\mathcal{C}_{/B})\xrightarrow{\simeq}\mathcal{C}_{/\mathrm{lim}Up}
\end{align}
is again a semi-descent diagram.
\end{definition}
 
Note that the limits in Definition~\ref{defcovfunctors} at dimension $n+1$ exist recursively by Lemma~\ref{lemmadiaglimit}. Indeed, $U$ is decomposable by assumption, and for all $0\leq m\leq n$ the functor
$\Pi^m_U(p|_{S^m})\colon\mathrm{Fun}_{p|_{S^m}}(D^{m+1},I)\rightarrow\mathcal{C}_{/\mathrm{lim}Up|_{S^m}}$ is again 
decomposable by assumption.
%

\begin{remark}
The case $n=-1$ in Definition~\ref{defcovfunctors} is trivial (as the limit of $Up$ is computed in the slice
$\mathcal{C}_{/B}$), so without loss of generality one may add $n=-1$ to Definition~\ref{defcovfunctors}.
\end{remark}

The following lemma states that all higher pre-diagonals can be expressed as iterated 1-dimensional pre-diagonals. 
This will be useful for later constructions.

\begin{lemma}\label{lemmaitprediag}
Let $\mathcal{C}$ be an $\infty$-category and $U\colon I\rightarrow\mathcal{C}_{/B}$ be a diagram. For any given pod 
$q\colon S^{n+1}\rightarrow I$ let $p=(\iota^{+})^{\ast}q$ be its restriction along the inclusion
$\iota^{+}\colon S^n\hookrightarrow S^n\ast S^0$. Let $(i,j)=(q(x_0),q(y_0))$. Then there is an equivalence
$f\colon\mathrm{lim}Uq\rightarrow\mathrm{lim}Up$ (computed in the according slices) together with a commutative 
diagram of associated pre-diagonals as follows.
\begin{align}\label{diaglemmaitprediag}
\begin{gathered}
\xymatrix{
\mathrm{Fun}_{q}(D^{n+1},I)\ar[rr]^{\Pi_U^{n+1}(q)}\ar[d]_{(\iota^+)^{\ast}}^{\simeq} & & \mathcal{C}_{/\mathrm{lim}Uq}\ar[d]_{\simeq}^{\Sigma_f}\\
\mathrm{Fun}_{p}(D^{n},\mathrm{Fun}_{(i,j)}(D^1,I))\ar[rr]_(.6){\Pi_{\Pi_U(i,j)}^n(p)} & & \mathcal{C}_{/\mathrm{lim}(\Pi_U(i,j) p)}
}
\end{gathered}
\end{align}
\end{lemma}
\begin{proof}
The square (\ref{diaglemmaitprediag}) unfolds by definition to the following diagram.
\begin{align*}
\xymatrix{
\mathrm{Fun}_{q}(D^{n+2},I)\ar[r]^{U_{\ast}}\ar[d]_{(\iota^+)^{\ast}}^{\simeq} & \mathrm{Fun}_{Uq}(D^{n+2},\mathcal{C}_{/B})\ar[d]_{(\iota^+)^{\ast}}^{\simeq}\ar[r]^(.6){\simeq} & \mathcal{C}_{/\mathrm{lim}Uq}\ar@{-->}[d]_{\simeq}^{\Sigma_f}\\
\mathrm{Fun}_{p}(D^{n+1},\mathrm{Fun}_{(i,j)}(D^1,I))\ar[r]_(.45){U_{\ast}} & \mathrm{Fun}_{Up}(D^{n+1},\mathrm{Fun}_{(U_i,U_j)}(D^1,\mathcal{C}_{/B}))\ar[r]_(.675){\simeq} & \mathcal{C}_{/\mathrm{lim}(\Pi_U(i,j) p)}
}
\end{align*}
Here, the square on the left hand side commutes simply by associativity.
The dashed equivalence on the right side is defined so that the right hand side square commutes. As the entire 
diagram commutes over $\mathcal{C}_{/B}$, this dashed equivalence is the push-forward along an equivalence
$f\colon\mathrm{lim}Uq\rightarrow\mathrm{lim}(\Pi_U(i,j) p)$ in $\mathcal{C}_{/B}$ by the Yoneda lemma.
\end{proof}

\begin{theorem}\label{thmcohtopex}
The class $\mathrm{SDesc}_{\infty}(\mathcal{C})$ of pre-diagonally stable semi-descent diagrams in $\mathcal{C}$ is a 
structured colimit pre-topology on any $\infty$-category $\mathcal{C}$.
\end{theorem}
\begin{proof}
The class $\mathrm{SDesc}_{\infty}(\mathcal{C})$ is reflexive as for any object $B\in\mathcal{C}$ the functor
$\{1_B\}\colon\Delta^0\rightarrow\mathcal{C}_{/B}$ is a descent diagram (Lemma~\ref{lemmadescdiagbase}.1). Its 
only pre-diagonal is $\{1_B\}\colon\Delta^0\rightarrow\mathcal{C}_{/B}$ itself. As every pre-diagonally stable
semi-descent diagram is a semi-descent diagram by definition, to show that $\mathrm{SDesc}_{\infty}(\mathcal{C})$ is 
a semi-descent class we are left to show that it is (cofinally) stable under base change. Therefore let
$U\colon I\rightarrow\mathcal{C}_{/B}$ be a pre-diagonally stable semi-descent diagram and
$f\colon C\rightarrow B$ be a morphism in $\mathcal{C}$. Then $f^{\ast}U\colon I\rightarrow\mathcal{C}_{/B}$ is again 
a semi-descent diagram. For any pod $p\colon S^n\rightarrow I$, we have the following commutative 
diagram.
\[\xymatrix{
\mathrm{Fun}_p(D^{n+1},I)\ar[r]^(.45){U_{\ast}}\ar@/_1pc/[dr]_(.3){(f^{\ast}U)_{\ast}} & \mathrm{Fun}_{Up}(D^{n+1},\mathcal{C}_{/B})\ar[r]^(.6){\simeq}\ar[d]^{f^{\ast}} & \mathcal{C}_{/\mathrm{lim}Up}\ar[d]^{f^{\ast}}  \\
 & \mathrm{Fun}_{f^{\ast}Up}(D^{n+1},\mathcal{C}_{/B})\ar[r]^(.6){\simeq} & \mathcal{C}_{/\mathrm{lim}f^{\ast}Up}
}\]
Here, the limit $\mathrm{lim}f^{\ast}Up$ is given by the pullback $f^{\ast}(\mathrm{lim}Up)$. By assumption, the top 
composition is a semi-descent diagram, and hence so is its post-composition with the base change functor $f^{\ast}$. 
This means that the bottom composition is a semi-descent diagram, which proves that the semi-descent diagram
$f^{\ast}U$ is again pre-diagonally stable.

To show that $\mathrm{SDesc}_{\infty}(\mathcal{C})$ is pre-diagonally closed, let
$U\colon I\rightarrow\mathcal{C}_{/B}$ be a pre-diagonally stable semi-descent diagram. We are to show that for all 
$i,j\in I$ the composition
\begin{align}\label{equthmcovtop1}
\Pi_U(i,j)\colon\mathrm{Fun}_{(i,j)}(D^1,I)\xrightarrow{U_{\ast}}\mathrm{Fun}_{(U_i,U_j)}(D^1,\mathcal{C}_{/B})\xrightarrow{\simeq}\mathcal{C}_{/U_i\times_{B} U_j}
\end{align}
is again a pre-diagonally stable semi-descent diagram. Thus, let $p\colon S^n\rightarrow\mathrm{Fun}_{(i,j)}(D^1,I)$ 
be a pod. We are to show that the pre-diagonal 
\begin{align*}
\Pi^n_{\Pi_U(i,j)}(p)\colon\mathrm{Fun}_p(D^{n+1},\mathrm{Fun}_{(i,j)}(D^1,I))\rightarrow\mathcal{C}_{/\mathrm{lim}(\Pi_U(i,j)p)}
\end{align*}
is a semi-descent diagram. Therefore, consider the uniquely determined pod-extension $q:=p\ast(i,j)\colon S^{n+1}\rightarrow I$ which 
restricts to $(i,j)\colon S^0\rightarrow I$ on $\iota\colon S^0\hookrightarrow S^{n+1}$, and such that the square
\begin{align}\label{diagthmcovtop+}
\begin{gathered}
\xymatrix{
S^n\ar[r]^(.3)p\ar[d]_{\iota^{+}} & \mathrm{Fun}_{(i,j)}(D^1,I)\ar[d]^{\mathrm{ev}_{x_1}} \\
S^{n+1}\ar[r]_q & I
}
\end{gathered}
\end{align}
commutes. This extension can be seen to exist via a series of transpositions and the calculations in 
(\ref{equthmcovtop}), using that the join $P\ast S^0$ (computed via the alternative join,
\cite[Section 4.2.1]{luriehtt}, which is the same on posets $P$) can be constructed as the coequalizer of the pair
\[\xymatrix{
P\times S^0\ar@<.5ex>[rr]^{1\times\iota_0}\ar@<-.5ex>[rr]_{(\{a,b\}\circ 1_P)\times\iota_0} & & P\times D^1
 }\]
for any given pair of points $a,b\in P$. We apply this to $P=D^{n+1},S^n$ and $a,b=x_0,y_0$. 
We have $p=(\iota^+)^{\ast}q$ by construction, and so we obtain a commutative diagram of higher pre-diagonals as in 
Lemma~\ref{lemmaitprediag}. The push-forward
$\Sigma_f\colon\mathcal{C}_{/\mathrm{lim}Uq}\rightarrow\mathcal{C}_{/\mathrm{lim}(\Pi_U(i,j) p)}$ in 
(\ref{diaglemmaitprediag}) is equivalent to the base change functor $(f^{-1})^{\ast}$. Thus, as the bottom horizontal 
composition of (\ref{diaglemmaitprediag}) is a semi-descent diagram by assumption, so is the top horizontal 
composition by Lemma~\ref{lemmadescdiagequiv}.
\end{proof}

%


\subsection{Higher covering diagrams}\label{secsubhcd}

In Theorem~\ref{thmcohtopex} we showed that $\mathrm{SDesc}_{\infty}(\mathcal{C})$ is always a structured colimit 
pre-topology. However, although (small) structured colimit pre-topologies $T$ are enough to construct an
$\infty$-topos of sheaves for $T$, the calculus of structured colimit pre-topologies itself appears to be fairly 
anodyne. For this reason we introduce the following strengthening in the case that $\mathcal{C}$ has pullbacks.

\begin{definition}
Say a diagram $U\colon I\rightarrow\mathcal{C}_{/B}$ is \emph{well-indexed} if $I$ has all pullbacks and
$U$ preserves them. A well-indexed pre-diagonally stable semi-descent diagram will be referred to as a
\emph{higher covering diagram}. We denote by $\mathrm{Geo}(\mathcal{C})$ the class of all small higher covering 
diagrams in $\mathcal{C}$. A \emph{well-structured colimit pre-topology} $T$ on $\mathcal{C}$ is a structured colimit 
topology of well-indexed diagrams.
\end{definition}

Well-indexed diagrams have the following advantage.

\begin{lemma}\label{lemmapbcolimittop}
Let $\mathcal{C}$ be an $\infty$-category with pullbacks. Suppose $U\colon I\rightarrow\mathcal{C}_{/B}$ is a
well-indexed diagram such that all pre-diagonals
$\Pi_U(i,j)\colon\mathrm{Fun}_{(i,j)}(D^1,\mathcal{C}_{/B})\rightarrow\mathcal{C}_{/U_i\times_B U_j}$ have a 
universal colimit. Then the natural transformation
\[\phi_U\colon\mathrm{Lan}_{\mathrm{ev}_{(1,2)}}(\mathrm{ev}_0^{\ast}(U))\rightarrow U\times_B U\]
from Lemma~\ref{lemmacolimittop1} is a cartesian natural transformation.
\end{lemma}
\begin{proof}
Given an arrow $(\alpha,\beta)\colon (i\sprime,j\sprime)\rightarrow (i,j)$ in $I\times I$, the induced push-forward
\[\Sigma_{(\alpha,\beta)}\colon \mathrm{Fun}_{(i\sprime,j\sprime)}(D^1,I)\rightarrow \mathrm{Fun}_{(i,j)}(D^1,I)\]
has a right adjoint $(\alpha,\beta)^{\ast}$ which maps a span $i\leftarrow k\rightarrow j$ to the limit
$(k\times_i i\sprime)\times_k (k\times_j j\sprime)$. Since $U\colon I\rightarrow\mathcal{C}_{/B}$ preserves 
pullbacks, the square
\[\xymatrix{
\mathrm{Fun}_{(i,j)}(D^1,I)\ar[r]^(.45){U_{\ast}}\ar[d]_{(\alpha,\beta)^{\ast}} & \mathrm{Fun}_{(U_i,U_j)}(D^1,\mathcal{C}_{/B})\ar[r]^(.6){\pi|_{(U_i,U_j)}}_(.6){\simeq} & \mathcal{C}_{/U_i\times_B U_j}\ar[d]^{(U(\alpha),U(\beta))^{\ast}} \\
\mathrm{Fun}_{(i\sprime,j\sprime)}(D^1,I)\ar[r]_(.45){U_{\ast}} & \mathrm{Fun}_{(U_{i\sprime},U_{j\sprime})}(D^1,\mathcal{C}_{/B})\ar[r]^(.6){\pi|_{(U_i\sprime,U_j\sprime)}}_(.6){\simeq} & \mathcal{C}_{/U_{i\sprime}\times_B U_{j\sprime}}
}\]
commutes up to equivalence. As the colimits of the pre-diagonals of $U$ are universal, and right adjoints are 
cofinal, 
via Lemma~\ref{lemmacolimittop1} we obtain a cartesian square of the form
\[\xymatrix{
\underset{i\sprime\leftarrow k\sprime\rightarrow j\sprime}{\mathrm{colim}}U_{k\sprime}\ar[r]\ar@{}[dr]|(.3){\pbs}\ar[d]_{\phi_U(i\sprime,j\sprime)} & \underset{i\leftarrow k\rightarrow j}{\mathrm{colim}}U_{k}\ar[d]^{\phi_U(i,j)} \\
U_{i\sprime}\times_B U_{j\sprime}\ar[r]_{(U\alpha,U\beta)} & U_{i}\times_B U_{j}.
}\]
\end{proof}

\begin{corollary}\label{corcolimittop}
Suppose $U\colon I\rightarrow\mathcal{C}_{/B}$ is a well-indexed descent diagram, and the pre-diagonal $\Pi_U(i,j)$ 
has a universal colimit for every pair $i,j\in I$. Then for all $i,j\in I$ there is a cartesian square of the form
\begin{align}\label{diagapp5}
\begin{gathered}
\xymatrix{
\underset{i\leftarrow k\rightarrow j}{\mathrm{colim}}U_k\ar[r]\ar[d]_{\phi_U(i,j)} & \mathrm{colim}U\ar[d]^{\Delta(\mathrm{colim}U)}_{\simeq} \\
U_i\times_B U_j\ar[r]_(.4){} & \mathrm{colim}U\times_B\mathrm{colim}U.
}
\end{gathered}
\end{align}
\end{corollary}
\begin{proof}
The product $U\times_B U\colon I\rightarrow\mathcal{C}_{/B}$ is again a descent diagram by
Corollary~\ref{cordescstabprod}. In particular, the canonical map
$\mathrm{colim}(U\times_B U)\rightarrow\mathrm{colim}U\times_B\mathrm{colim}U$ is an equivalence. 
Furthermore, the natural transformation
$\phi_U\colon\mathrm{Lan}_{\mathrm{ev}_{(1,2)}}(\mathrm{ev}_0^{\ast}(U))\rightarrow U\times_B U$ 
in $\mathrm{Fun}(I^{\partial\Delta^1},\mathcal{C}_{/B})$ is cartesian by Lemma~\ref{lemmapbcolimittop}. By 
Lemma~\ref{lemmacolimittop1} the colimit of $\phi_U$ is the diagonal $\Delta(\mathrm{colim}U)$. As $U\times_B U$ is a 
descent diagram, it follows that the squares (\ref{diagapp5}) are cartesian.
\end{proof}

\begin{remark}\label{remwicolimittop}
Via Corollary~\ref{corcolimittop}, the proof of Theorem~\ref{thmcolimtop} in fact shows that $\mathrm{Cov}_T$ is an 
Id-modulator (Remark~\ref{remidmod}) whenever $T$ is a well-structured colimit pre-topology on an
$\infty$-category $\mathcal{C}$.
\end{remark}

Furthermore, if $\mathcal{C}$ has pullbacks, every structured colimit pre-topology on $\mathcal{C}$ can be replaced 
by a well-structured colimit pre-topology that presents the same sheaf theory.

\begin{lemma}\label{lemmarfibwellind}
Suppose $\mathcal{C}$ has pullbacks. Then all right fibrations over $\mathcal{C}$ are well-indexed.
\end{lemma}
\begin{proof}
Every cospan $A_1\xrightarrow{a_1}B\xleftarrow{a_2} A_2$ of cartesian morphisms in the total
$\infty$-category $\mathcal{E}$ of a right fibration $p\colon\mathcal{E}\twoheadrightarrow\mathcal{C}$ gives rise to 
a cospan $(p(a_1),p(a_2))$ in $\mathcal{C}$. The projections
$\pi_i\colon p(A_1)\times_{p(C)} p(A_2)\rightarrow p(A_i)$ induce cartesian lifts $D_i\rightarrow A_i$ such that 
$p(D_i)\simeq p(A_1)\times_{p(C)} p(A_2)$. As $p(a_1)\pi_1=p(a_2)\pi_2$ in $B$, and 
cartesian lifts compose and are unique up to equivalence, the objects $D_1$ and $D_2$ are equivalent objects in the 
fiber of $p(D_i)$, and the resulting square
\[\xymatrix{
D_i\ar[r]\ar[d] & A_1\ar[d] \\
A_2\ar[r] & B
}\]
of cartesian morphisms in the total $\infty$-category $\mathcal{E}$ commutes. Such squares are automatically 
cartesian.
\end{proof}

\begin{remark}\label{remrfibwellind}
Lemma~\ref{lemmarfibwellind} has an intuitive conceptual explanation. Namely, right fibrations over $\mathcal{C}$ 
represent presheaves over $\mathcal{C}$ under the (Un)Straightening construction. Trivially, the
$\infty$-category $\mathcal{S}$ embeds into the doctrine of $\mathrm{Cat}_{\infty}^{\mathrm{pb}}$ of
$\infty$-categories with pullbacks and pullback-preserving functors. It follows that presheaves are objects with 
pullbacks in the $\infty$-cosmos of indexed $\infty$-categories over $\mathcal{C}$. Equivalently, 
right fibrations are objects with pullbacks in the $\infty$-category of cartesian fibrations over
$\mathcal{C}$ (or one can directly argue that right fibrations are the discrete objects here and such always have 
pullbacks formally). Whenever $\mathcal{C}$ has pullbacks itself, the fact that such objects are the
well-indexed ones is exactly the kind of statement often found in the categorical context, see e.g.\ 
\cite[Lemma B.1.4.1]{elephant}.
\end{remark}

\begin{proposition}\label{corhcdiscanonical}
Suppose $\mathcal{C}$ is a (small) $\infty$-category with pullbacks and suppose
$\hat{\mathcal{C}}\rightarrow\mathcal{E}$ is a sub-canonical left exact (accessible) localization. Then there is a 
(small) well-structured colimit topology $T_{\mathcal{E}}$ on $\mathcal{C}$ such that
$\mathrm{Sh}_{T_{\mathcal{E}}}(\mathcal{C})=\mathcal{E}$.
\end{proposition}
\begin{proof}
We make the argument for small $\infty$-categories and sub-canonical left exact accessible localizations. For the 
large case simply replace occurrences of ``set'' by ``class'' and drop the smallness condition on modulators.
Choose a fiberwise left exact modulator $M$ such that $\mathcal{E}=\hat{\mathcal{C}}[M^{-1}]$ and let 
$T_{\mathcal{E}}:=\mathrm{Un}[M]$. Then $M=\mathrm{Cov}_{T_{\mathcal{E}}}$ by Lemma~\ref{lemmastcolim}. The set 
$T_{\mathcal{E}}$ consists of well-indexed diagrams by Lemma~\ref{lemmarfibwellind}. It is reflexive and stable under 
base change, because $M$ is reflexive and pullback-stable. The fact that every diagram in $T_{\mathcal{E}}(B)$ is 
colimiting over $B$ follows from sub-canonicity: For all objects $B\in\mathcal{C}$ and all $m\in M(B)$, let
$\mathrm{Un}(m)\colon I\twoheadrightarrow\mathcal{C}_{/B}$ be the Unstraightening of $m\in\widehat{\mathcal{C}_{/B}}$ 
so that the diagram
\begin{align}\label{diagcorhcdiscanonical}
\begin{gathered}
\xymatrix{
\mathcal{C}(B,C)\ar[rr]^(.35){\mathrm{Un}(m)^{\ast}\mathsf{C}}\ar[d]^{\simeq}_y & & \mathrm{Fun}(I,\mathcal{C})(s\mathrm{Un}(m),\mathsf{C}(C))\ar[d]_{\simeq}^y \\
\hat{\mathcal{C}}(yB,yC)\ar[rr]^(.35){y\mathrm{Un}(m)^{\ast}\mathsf{C}}\ar@/_1pc/[drr]_(.35){\mathrm{colim}y(\mathrm{Un}(m))^{\ast}} & & \mathrm{Fun}(I,\hat{\mathcal{C}})(ys\mathrm{Un}(m),\mathsf{C}(yC))\ar[d]^{\simeq} \\
 & & \hat{\mathcal{C}}(\mathrm{colim}(ys\mathrm{Un}(m)),yC) \\
}
\end{gathered}
\end{align}
commutes. Any such $m\in M(B)$ is the colimit of the composition
\[\mathrm{El}(m)\overset{\mathrm{Un}(m)}{\twoheadrightarrow}\mathcal{C}_{/B}\xrightarrow{y}\hat{\mathcal{C}}_{/yB},\]
and as all $yC$ are assumed to be $m$-local it follows that the restriction functor
$\mathrm{colim}(y\mathrm{Un}(m))^{\ast}$ in Diagram~(\ref{diagcorhcdiscanonical}) is an equivalence. In turn, it 
follows that the top horizontal arrow in Diagram~(\ref{diagcorhcdiscanonical}) is an equivalence for all 
$C\in\mathcal{D}$, which means that the cocone $\mathrm{Un}(m)\colon\mathrm{El}(m)\rightarrow\mathcal{C}_{/B}$ is 
colimiting.

It then follows that the set $T_{\mathcal{E}}$ consists of semi-descent diagrams indeed by the fact that $M$ is pullback-stable and that $f^{\ast}\mathrm{Un}(m)\simeq\mathrm{Un}(f^{\ast}m)$ for all $m\in M(B)$ and all
$f\colon C\rightarrow B$ in $\mathcal{C}$. We are left to show that 
$T_{\mathcal{E}}$ is pre-diagonally closed. Therefore let $U\colon I\rightarrow\mathcal{C}_{/B}$ be in 
$T_{\mathcal{E}}(B)$ and $i,j\in I$ be objects. The morphism $\mathrm{colim}yU\rightarrow yB$ is contained in
$\mathrm{Cov}_{T_{\mathcal{E}}}$ by definition. Thus, as $\mathrm{Cov}_{T_{\mathcal{E}}}$ is fiberwise left exact, it 
follows that the morphism
\[\mathrm{colim}(y\Pi_U(i,j))\rightarrow y U_i\times_{yB} yU_j\]
is again contained in $\mathrm{Cov}_{T_{\mathcal{E}}}$ by Corollary~\ref{corcolimittop} applied to 
$yU\colon I\rightarrow \hat{\mathcal{C}}_{/yB}$. In particular, the right fibration
\begin{align}\label{equlemmawellstrrepl}
\mathrm{Un}(\mathrm{colim}(y\Pi_U(i,j)))\twoheadrightarrow\mathcal{C}_{/U_i\times_{B} U_j}
\end{align}
is contained in $T_{\mathcal{E}}$. The right fibration (\ref{equlemmawellstrrepl}) and the pre-diagonal
$\Pi_U(i,j)\colon I\rightarrow\mathcal{C}_{/U_i\times_B U_j}$ have the same colimit after post-composition with the 
Yoneda embedding $\mathcal{C}_{/B}\rightarrow\hat{\mathcal{C}}_{/yB}$ by construction.
It follows that $\Pi_U(i,j)$ and the right fibration (\ref{equlemmawellstrrepl}) are cofinally equivalent.
\end{proof}

In particular, it follows that all structured colimit pre-topologies can be presented by a well-structured one 
whenever $\mathcal{C}$ has pullbacks.

\begin{corollary}\label{lemmawellstrrepl}
Suppose $\mathcal{C}$ has pullbacks. Then for every (small) structured colimit pre-topology $T$ there is a (small)
well-structured colimit pre-topology $T\sprime$ such that
$\mathrm{Sh}_T(\mathcal{C})=\mathrm{Sh}_{T\sprime}(\mathcal{C})$.
\end{corollary}
\begin{proof}
If $T$ is a small structured colimit pre-topology, then $\hat{\mathcal{C}}\rightarrow\mathrm{Sh}_T(\mathcal{C})$ is 
accessible, left exact and sub-canonical. We thus may apply Proposition~\ref{corhcdiscanonical}. 
\end{proof}

\begin{lemma}\label{lemmaequivmaxtop}
Every well-structured colimit pre-topology $T$ on a (small) $\infty$-category $\mathcal{C}$ with pullbacks is 
contained in a largest well-structured colimit pre-topology $\bar{T}$ such that
$\mathrm{Sh}_T(\mathcal{C})=\mathrm{Sh}_{\bar{T}}(\mathcal{C})$. The class $\bar{T}$ is closed under cofinal 
equivalence, i.e.\ whenever $U\colon I\rightarrow\mathcal{C}_{/B}$ and $V\colon I\rightarrow\mathcal{C}_{/B}$ are 
well-indexed diagrams such that $U$ and $V$ are cofinally equivalent, then $U$ is contained in $\bar{T}$ if and only 
if $V$ is contained in $\bar{T}$.
\end{lemma}
\begin{proof}
Let $\mathrm{Sh}_T(\mathcal{C})=\hat{\mathcal{C}}[\mathrm{Cov}_T^{-1}]$ be the $\infty$-topos generated by $T$ from 
Theorem~\ref{thmcolimtop}, which, if $\mathcal{C}$ and $T$ are large, is to be taken as the $\infty$-topos of
$T$-sheaves valued in an accordingly larger universe of spaces. 

For each $B\in\mathcal{C}$, let $M(B)\subseteq\hat{\mathcal{C}}_{/yB}$ be the large fiberwise left exact modulator 
of maps over $yB$ which are inverted by $\hat{\mathcal{C}}\rightarrow\mathrm{Sh}_T(\mathcal{C})$.
For each $B\in\mathcal{C}$, let $\bar{T}(B)$ be the class of well-indexed diagrams
$U\colon I\rightarrow\mathcal{C}_{/B}$ such that $\mathrm{colim}yU\rightarrow yB$ is contained in $M(B)$. Then
$\bar{T}$ is the largest class of well-indexed diagrams such that
$\mathrm{Sh}_T(\mathcal{C})=\mathrm{Sh}_{\bar{T}}(\mathcal{C})$ by construction. In particular, $T$ is 
contained in $\bar{T}$ and so the latter is reflexive. Furthermore, we have seen in the proof of 
Proposition~\ref{corhcdiscanonical} that $\mathrm{Un}[M]$ is a well-structured colimit pre-topology. Every diagram
$U\colon I\rightarrow\mathcal{C}_{/B}$ in $T(B)$ is cofinally equivalent to the right fibration
$\mathrm{Un}(\mathrm{colim}yU)\twoheadrightarrow\mathcal{C}_{/B}$ via Lemma~\ref{lemmastcolim}. As
$\mathrm{colim}yU$ is contained in $M(B)$ by assumption, its Unstraightening is contained in $\mathrm{Un}[M]$ and so 
is a semi-descent diagram. In particular, every diagram $U$ in $\bar{T}$ is a semi-descent diagrams by 
Lemma~\ref{lemmadesccofstable}. The class $\bar{T}$ is stable under base change for essentially the same reason.

To show that $\bar{T}$ is pre-diagonally closed, let $U\colon I\rightarrow\mathcal{C}_{/B}$ be contained in $\bar{T}$ 
and consider the composition
\[\Pi_{yU}(i,j)\colon\mathrm{Fun}_{(i,j)}(D^1,I)\xrightarrow{\Pi_U(i,j)}\mathcal{C}_{/U_i\times_{C} U_j}\xrightarrow{y}\hat{\mathcal{C}}_{/yU_i\times_{yC} yU_j}.
\]
The pre-diagonal $\Pi_U(i,j)$ is contained in $\bar{T}$ if the morphism
$\mathrm{colim}(y\Pi U(i,j))\rightarrow yU_i\times_{yB}yU_j$ in $\hat{\mathcal{C}}$ is inverted by the left exact 
localization $\hat{\mathcal{C}}\rightarrow\mathrm{Sh}_T(\mathcal{C})$. By assumption, the morphism
$\mathrm{colim}yU\rightarrow yB$ is inverted, and hence so is its diagonal
$\Delta_{\mathrm{colim}yU}\colon\mathrm{colim}yU\rightarrow\mathrm{colim}yU\times_{yB}\mathrm{colim}yU$. By 
Corollary~\ref{corcolimittop} applied to the well-indexed diagram $yU\colon I\rightarrow\hat{\mathcal{C}}_{/yB}$, we 
see that the morphisms $\mathrm{colim}(\Pi_{yU}(i,j))\rightarrow yU_i\times_{yB}yU_j$  are inverted as well. 

We are left to show that $\bar{T}$ is cofinally closed. However, given cofinally equivalent diagrams
$U\colon I\rightarrow\mathcal{C}_{/B}$ and $V\colon J\rightarrow\mathcal{C}_{/B}$, by definition
$\mathrm{colim}_I yU\rightarrow yB$ is contained in $\mathrm{Cov}_{\bar{T}}$ if and only if
$\mathrm{colim}_J yV\rightarrow yB$ is so. 
\end{proof}

\begin{theorem}\label{prophcdmaxtop}
Let $\mathcal{C}$ be an $\infty$-category with pullbacks. Then $\mathrm{Geo}(\mathcal{C})$ is 
the largest well-structured colimit pre-topology on $\mathcal{C}$. 
\end{theorem}
\begin{proof}
We have seen that $\mathrm{SDesc}_{\infty}(\mathcal{C})$ is a structured colimit pre-topology in 
Theorem~\ref{thmcohtopex}. As well-indexedness is stable under all relevant operations, it follows that the class 
$\mathrm{Geo}(\mathcal{C})$ of higher covering diagrams is a structured colimit pre-topology as well.
Any other well-structured colimit pre-topology $T$ is contained in a well-structured colimit pre-topology $\bar{T}$ 
that is closed under cofinal equivalence by Lemma~\ref{lemmaequivmaxtop}. This class $\bar{T}$ consist of
semi-descent diagrams and is (actually) closed under pre-diagonals. It follows that the pre-diagonal of any diagram 
contained in $\bar{T}$ is again a semi-descent diagram. As all higher pre-diagonals of any diagram in $\bar{T}$ are 
likewise again contained in $\bar{T}$ via their iterative description in Lemma~\ref{lemmaitprediag}, it follows that 
every diagram in $\bar{T}$ is a higher covering diagram.
\end{proof}

Theorem~\ref{prophcdmaxtop} in conjunction with Proposition~\ref{corhcdiscanonical} morally states that 
$\mathrm{Sh}_{\mathrm{Geo}}(\mathcal{C})$ is the largest sub-canonical sheaf theory on any small $\infty$-category
$\mathcal{C}$ with pullbacks, and that as such it presents the canonical sheaf theory on such $\infty$-categories. 
However, as the structured colimit pre-topology $\mathrm{Geo}(\mathcal{C})$ is not small even if $\mathcal{C}$ is 
small, it a priori only gives rise to a left exact localization of large presheaves. Hence, the according canonicity 
property has to be stated subject to a size caveat. There are multiple ways to do so, one of which is formulated in 
Remark~\ref{remjointcanonical}.

\begin{corollary}\label{corhcfcofstab}
Let $\mathcal{C}$ be an $\infty$-category with pullbacks. The class $\mathrm{Geo}(\mathcal{C})$ of higher covering 
diagrams is closed under cofinal equivalence among well-indexed diagrams.
\end{corollary}
\begin{proof}
By Lemma~\ref{lemmaequivmaxtop} there is a well-structured colimit pre-topology on $\mathcal{C}$ that is cofinally 
closed and contains $\mathrm{Geo}(\mathcal{C})$. By Theorem~\ref{prophcdmaxtop}, this well-structured 
colimit pre-topology is in turn contained in $\mathrm{Geo}(\mathcal{C})$.
\end{proof}


We end this section with some examples of higher covering diagrams. Therefore, we first note that higher covering 
diagrams are ubiquitous whenever $\mathcal{C}$ has descent.

\begin{lemma}\label{lemmadeschcd}
Suppose $\mathcal{C}$ has pullbacks, and $U\colon I\rightarrow\mathcal{C}_{/B}$ is a well-indexed descent diagram 
such that for all $p\colon S^n\rightarrow I$, the pre-diagonal
$\Pi^n_U(p)\colon\mathrm{Fun}_p(D^{n+1},I)\rightarrow\mathcal{C}_{/\mathrm{lim}Up}$ factors through a descent 
diagram
\[\Pi^n_U(p)\colon\mathrm{Fun}_p(D^{n+1},I)\rightarrow\mathcal{C}_{/\mathrm{colim}\Pi^n_U(p)}.\]
Then $U$ is a higher covering diagram.
\end{lemma}
\begin{proof}
We may apply Corollary~\ref{corcolimittop} to the diagram $U\colon I\rightarrow\mathcal{C}_{/B}$. 
The diagonal $B\rightarrow B\times_B B$ is always an equivalence, which shows that $\phi_U$ is a natural equivalence. 
In other words, each pre-diagonal
$\Pi_U(i,j)\colon\mathrm{Fun}_{(i,j)}(D^{1},I)\rightarrow\mathcal{C}_{/U_i\times_B U_j}$ is again colimiting (and hence in particular a descent diagram). Applying this argument inductively shows that all higher pre-diagonals
\[\Pi_U^n(p)\colon\mathrm{Fun}_p(D^{n},I)\rightarrow\mathcal{C}_{/\mathrm{lim}Up}\]
are (semi-)descent diagrams via Lemma~\ref{lemmaitprediag}.
\end{proof}

\begin{corollary}\label{corhcdlogos}
Suppose $\mathcal{C}$ is finitely complete and cocomplete and has descent (Example~\ref{explelogoi}). Then every
well-indexed small diagram $U\colon I\rightarrow\mathcal{C}_{/\mathrm{colim}U}$ is a higher covering diagram.
\end{corollary}
\begin{proof}
This follows immediately from Lemma~\ref{lemmadeschcd}. Alternatively, one can show directly that in this case
the class of all small well-indexed diagrams is a well-structured colimit pre-topology. The statement then follows 
from Theorem~\ref{prophcdmaxtop}.
\end{proof}

Vice versa, we have the following implication in the other direction.

\begin{lemma}\label{lemmahcddesc}
For any $\infty$-category $\mathcal{C}$ with pullbacks, the class $\mathrm{Geo}(\mathcal{C})$ is a descent class. In 
particular, if $U\colon I\rightarrow\mathcal{C}_{/B}$ is a small higher covering diagram such that for all cartesian 
natural transformations $V\rightarrow U$, the domain $V\colon I\rightarrow\mathcal{C}_{/B}$ also factors through a 
higher covering diagram $V\colon I\rightarrow\mathcal{C}_{/\mathrm{colim}V}$, then $U$ is a descent diagram. 
\end{lemma}
\begin{proof}
It suffices to show that for all regular cardinals $\kappa$ large enough, the set
$\mathrm{Geo}_{\kappa}(\mathcal{C})$ of $\kappa$-small higher covering diagrams is a descent class. However, each set 
$\mathrm{Geo}_{\kappa}(\mathcal{C})$ is a small well-structured colimit pre-topology on $\mathcal{C}$ via 
Theorem~\ref{prophcdmaxtop}. The statement hence follows from Corollary~\ref{cordescentdiaganel}.
\end{proof}

\begin{remark}\label{remdeschcdequiv}
Lemma~\ref{lemmadeschcd} and Lemma~\ref{lemmahcddesc} together show an interesting interplay between descent diagrams 
and higher covering diagrams. Higher covering diagrams are always closed under higher pre-diagonals, while descent 
diagrams are always closed under cartesian natural transformations. If a higher covering diagram is such that every 
cartesian natural transformation over it factors again through a higher covering diagram, then it is a descent 
diagram. If a descent diagram is such that its higher pre-diagonals factor through descent diagrams, then it is a 
higher covering diagram. 
Thus, whenever enough colimiting diagrams in $\mathcal{C}$ are descent diagrams, or whenever enough colimiting 
diagrams in $\mathcal{C}$ are higher covering diagrams for that matter, then one implies the other. 
This will be the case in the $\infty$-categories considered in Section~\ref{secexttop} as well as in 
Section~\ref{secsites}.
\end{remark}

\begin{example}\label{explecovprod}
Suppose $\mathcal{C}$ is an $\infty$-category with pullbacks. Suppose $I$ is an $\infty$-category that has both 
pullbacks and non-empty finite products. Then a well-indexed semi-descent diagram
$U\colon I\rightarrow\mathcal{C}_{/B}$ is a higher covering diagram if and only if it preserves non-empty finite 
products. In particular, if $I$ has all finite limits and hence a terminal object $t$, then a diagram
$U\colon I\rightarrow\mathcal{C}_{/B}$ is higher covering if and only if $U$ is left exact.
\end{example}
\begin{proof}
If $I$ has non-empty finite products, then for every pod $p\colon S^n\rightarrow I$ the $\infty$-category
$\mathrm{Fun}_p(D^{n+1},I)$ has a terminal object given by the iterated pullback
\[\mathrm{lim}p\simeq p(x_n)\times_{\vdots_{p(x_0)\times p(y_0)}} p(y_n).\]
In this case, the colimit of any diagram with domain $\mathrm{Fun}_p(D^{n+1},I)$ is computed by evaluation at this 
terminal object. Thus, if $U\colon I\rightarrow\mathcal{C}_{/B}$ is a well-indexed semi-descent diagram that 
preserves non-empty finite products, then the higher pre-diagonal
$\Pi^n_U(p)\colon\mathrm{Fun}_p(D^{n+1},I)\rightarrow\mathcal{C}_{/\mathrm{lim}Up}$ is colimiting. Furthermore, the 
diagram
\[\xymatrix{
I\ar[r]^U\ar[d]_{-\times\mathrm{lim}p} & \mathcal{C}_{/B}\ar[r]^{-\times_B\mathrm{lim}Up} &
\mathcal{C}_{/\mathrm{lim}Up}\\
I_{/\mathrm{lim}p}\ar[r]_(.4){\simeq} & \mathrm{Fun}_p(D^{n+1},I)\ar@/_1pc/[ur]_(.6){\Pi^n_U(p)} & 
}\]
commutes. As $U$ is assumed to be a semi-descent diagram, so is the top horizontal composition
$-\times_B\mathrm{lim}Up\circ U$. The left vertical functor $-\times\mathrm{lim}p$ is a right adjoint and hence 
cofinal. So is the equivalence on the bottom, which means that $\Pi^n_U(p)$ is a semi-descent diagram by 
Lemma~\ref{lemmadesccofstable}.

Vice versa, if $U\colon I\rightarrow\mathcal{C}_{/B}$ is a higher covering diagram, then for every pair $i,j\in I$ 
the cocone $\Pi_U(i,j)\colon\mathrm{Fun}_{(i,j)}(D^1,I)\rightarrow\mathcal{C}_{/U_i\times_B U_j}$ is 
colimiting by assumption. Again, this means that
$\Pi_U(i,j)(i\leftarrow i\times j\rightarrow j)\simeq 1_{U_i\times_B U_j}$, and hence that
$U(i\times j)\simeq U_i\times_B U_j$. 

Suppose $I$ is left exact. Then if $U\colon I\rightarrow\mathcal{C}_{/B}$ is higher covering, to show that $U$ is 
left exact we are left to show that $U$ preserves the terminal object. But the terminal cocone
$\mathrm{id}_I\rightarrow t$ is an absolute colimit, and so $\mathrm{colim}U\simeq U(t)$ as well. The 
colimit of $U$ is the terminal object $1_B\in\mathcal{C}_{/B}$ by assumption that $U$ is colimiting. 
Thus, $U$ preserves the terminal object. Vice versa, if $U$ is left exact, by the above we are left to show that $U$ 
is a semi-descent diagram. But $\mathrm{colim}U\simeq U(t)\simeq 1_{B}$, and so $U$ is colimiting.
Furthermore, for all $f\colon C\rightarrow B$ the base change functor
$f^{\ast}\colon\mathcal{C}_{/B}\rightarrow\mathcal{C}_{/C}$ is left exact itself, and so the composition
$f^{\ast}U\colon I\rightarrow\mathcal{C}_{/C}$ is again colimiting by the same argument.
\end{proof}

\begin{example}\label{explecovid}
Suppose $\mathcal{C}$ has pullbacks. The identity functor on $\mathcal{C}$ admits a colimit if and only if
$\mathcal{C}$ has a terminal object $t$. In this case, the identity functor
$1_{\mathcal{C}}\colon\mathcal{C}\rightarrow\mathcal{C}_{/t}$ is always a higher covering diagram. Indeed,
$\mathcal{C}$ has all finite limits and hence by Example~\ref{explecovprod} the identity on $\mathcal{C}$ is a higher 
covering diagram.
\end{example}

\begin{example}\label{explecovgen}
Suppose $\mathcal{C}$ is a presentable $\infty$-category with universal colimits. Let $I\hookrightarrow\mathcal{C}$ 
be the fully faithful inclusion of some generating set which is closed under pullbacks. Then for all objects 
$B\in\mathcal{C}$, the fully faithful inclusion
\[\iota_C\colon I_{/B}\hookrightarrow\mathcal{C}_{/B}\]
is a higher covering diagram. Indeed, $\iota_B$ is a pre-descent diagram for all objects $B$, and all pre-descent 
diagrams in $\mathcal{C}$ are semi-descent diagrams by assumption. We are hence to show that for all
$B\in\mathcal{C}$ and all pods $p\in\mathrm{Fun}(S^n,I_{/B})$, the pre-diagonal 
$\Pi_{\iota_B}^n(p)\colon\mathrm{Fun}_p(D^n,I_{/B})\hookrightarrow\mathcal{C}_{/\mathrm{lim}\iota_B p}$ is again 
colimiting. However, by virtue of fully faithfulness of the embedding $\iota_B$, the $\infty$-category
$\mathrm{Fun}_p(D^n,I_{/B})$ is equivalent to the slice $I_{/\mathrm{lim}\iota_B p}$, and the pre-diagonal
$\Pi_{\iota_B}^n(p)$ itself is equivalent to the fully faithful embedding
\[\iota_{\mathrm{lim}\iota_B p}\colon I_{/\mathrm{lim}\iota_B p}\hookrightarrow \mathcal{C}_{/\mathrm{lim}\iota_B p}.\]
This embedding is colimiting by assumption.
\end{example}

\begin{example}\label{explecovposet}
Suppose $\mathcal{C}$ has pullbacks. Whenever $I$ is a poset with non-empty finite meets, a well-indexed
semi-descent diagram $U\colon I\rightarrow\mathcal{C}_{/B}$ is higher covering if and only if it factors through a 
meet-preserving morphism of posets
\[\xymatrix{
 & \mathrm{Sub}(B)\ar@{^(->}[d]\\
I\ar[r]_U\ar@{-->}[ur] & \mathcal{C}_{/B}.
}\]
This follows from Example~\ref{explecovprod} and the fact that for all $i\in I$ the degenerate triple $i=i=i$ is 
terminal in the category of spans $i\leftarrow k\rightarrow i$ whenever $I$ is a poset. Hence,
$U_i\xrightarrow{\simeq}U_i\times_{B}U_i$ if $U$ is higher covering. 
\end{example}

\begin{example}\label{explecovgrpd}
Suppose $\mathcal{C}$ is an $\infty$-groupoid. Then every diagram $U\colon I\rightarrow\mathcal{C}_{/B}$ is
pullback-preserving, and all colimits that exist in $\mathcal{C}$ are universal. Furthermore, as all slices of an
$\infty$-groupoid $\mathcal{C}$ are contractible, all diagrams of type $I\rightarrow\mathcal{C}_{/B}$ for any object 
$B\in\mathcal{C}$ are pre-descent diagrams trivially. It follows that every diagram of type
$I\rightarrow\mathcal{C}_{/B}$ (where $I$ has pullbacks) is a pre-diagonally stable semi-descent diagram (a higher 
covering diagram). 
\end{example}

The next two sections are concerned specifically with the following two examples.

\begin{example}
Every ($\kappa$-small) discrete colimiting diagram $U\colon I\rightarrow \mathcal{C}_{/B}$ is higher covering 
whenever $\mathcal{C}$ is ($\kappa$-)extensive. See Section~\ref{secexttop}.
\end{example}

\begin{example}
For any morphism $f\colon E\rightarrow B$ in a regular $\infty$-category $\mathcal{C}$, the symmetric \v{C}ech nerve
$\check{\Sigma}(f)\colon\mathrm{FinSet}_{+}^{op}\rightarrow\mathcal{C}_{/B}$ is a higher covering diagram. More 
generally, for any $\kappa$-coherent cover $F=(E_i\rightarrow B\mid i\in\kappa)$ in a $\kappa$-coherent
$\infty$-category $\mathcal{C}$, the associated symmetric \v{C}ech nerve
$\check{\Sigma}(F)\colon\mathrm{FS}_{+}(I)^{op}\rightarrow\mathcal{C}_{/B}$ is a higher covering diagram. See 
Section~\ref{secregtop}.
\end{example}


\section{Extensive $\infty$-categories and their sheaves}\label{secexttop}

We formulate the definitions in this section for the finite case only, but everything up to 
Proposition~\ref{propexthyper} can be phrased for arbitrary regular cardinals $\kappa$ in a straight-forward fashion.

\begin{definition}
Let $\mathcal{C}$ be a small $\infty$-category with finite coproducts and pullbacks along coproduct injections. Given 
a finite collection of objects $\{U_i\in\mathcal{C}\mid i\leq n\}$, for a pair $i,j\leq n$ consider the pullback
\[\xymatrix{
P_{i,j}\ar[r]\ar[d]\ar@{}[dr]|(.3){\pbs} & U_i\ar[d]^{\iota_i} \\
U_j\ar[r]_(.4){\iota_j} & \coprod_{i\leq n}U_i.
}\]
Coproducts in $\mathcal{C}$ are \emph{disjoint} if for every such finite collection 
of objects and every pair $i,j\leq n$, we have $P_{i,j}\simeq\emptyset$ whenever $i\not= j$. Coproducts in
$\mathcal{C}$ are \emph{universal} if for any such finite collection of objects and every map
$C\rightarrow B$ where $B\simeq\coprod_{i\in I}U_i$, the induced map $\coprod_{i\in I}(U_i\times_{B} C)\rightarrow C$ 
is an equivalence. The $\infty$-category $\mathcal{C}$ is \emph{extensive} if coproducts in $\mathcal{C}$ are both 
disjoint and universal.
\end{definition}

\begin{remark}
Universality of coproducts in an extensive $\infty$-category $\mathcal{C}$ implies that for every finite 
collection $U_i$ of objects in $\mathcal{C}$, the pullbacks $P_{i,i}$ are equivalent to $U_i$ (over $U_i$). This 
means that the coproduct injections $\iota_i\colon U_i\rightarrow\coprod_{i\leq n}U_i$ are $(-1)$-truncated.
Furthermore, the $0$-ary coproduct in an extensive $\infty$-category $\mathcal{C}$ exists by assumption. It is always 
a strict initial object as can be shown along the lines of the same statement for ordinary extensive categories, see 
e.g.\ \cite[Proposition 2.8]{clwextdist}.
\end{remark}

Note that an $\infty$-category $\mathcal{C}$ with finite coproducts has pullbacks along coproduct injections if and 
only if every colimiting discrete diagram $U\colon I\rightarrow\mathcal{C}_{/B}$ is a pre-descent diagram.

\begin{proposition}\label{propextdesc}
Let $\mathcal{C}$ be an $\infty$-category with finite coproducts and pullbacks along coproduct injections. Then the following are equivalent.
\begin{enumerate}
\item $\mathcal{C}$ is extensive.
\item Every finite discrete colimiting diagram $U\colon I\rightarrow\mathcal{C}_{/B}$ is a descent diagram. 
\item Every finite discrete colimiting diagram $U\colon I\rightarrow\mathcal{C}_{/B}$ is a higher covering diagram.
\end{enumerate}
\end{proposition}
\begin{proof}
By definition, $U\colon I\rightarrow\mathcal{C}_{/U}$ is a descent diagram if and only if the coproduct
$B=\coprod_{i\in I}U_i$ is universal and effective. To show the equivalence of 1 and 2, we hence are to compare 
effectiveness of this coproduct to disjointness thereof (under assumption of their universality). Therefore, if it is 
effective, for a given $j\in I $ we may consider the (cartesian) natural transformation $C_{i,j}\rightarrow U_i$ for 
$i\in I$ defined by $C_{i,j}=U_j$ if $i=j$ and $C_{i,j}=\emptyset$ otherwise. Each colimit $\coprod_{i\in I}C_{i,j}$ 
is equivalent to $U_j$ over $B$; by virtue of effectiveness, it follows that $C_{i,j}\simeq P_{i,j}$ over $U_j$ 
for all $j\in I$. This means that the coproduct $B$ is disjoint. Vice versa, given any (cartesian) natural 
transformation $\{C_i\rightarrow U_i\mid i\in I\}$, we are to show that for all $j\in I$ the natural map 
$C_j\rightarrow U_j\times_{B}\coprod_{i\in I}C_i$ is an equivalence. As the coproduct $B$ is assumed to be universal, 
the natural map $C_j\rightarrow\coprod_{i\in I}(U_i\times_{B}C_j)$ is an equivalence for all $j\in I$. But we have
$U_i\times_{B}C_j=C_j$ whenever $i=j$ as the coproduct injection $U_i\rightarrow B$ is $(-1)$-truncated, and we have  
$U_i\times_{B}C_j=\emptyset$ otherwise as the coproduct $B$ is disjoint and the initial object in $\mathcal{C}$ is 
strict.

To show the equivalence of 1 and 3, we note that every discrete diagram is well-indexed as all squares in a set 
trivial, all trivial squares are cartesian, and all diagrams preserve triviality of a square. Let us assume that 
coproducts in $\mathcal{C}$ are universal, i.e.\ equivalently that all discrete colimiting diagrams in
$\mathcal{C}$ are semi-descent diagrams. Under this assumption, we are to compare pre-diagonal stability of a 
discrete colimiting diagram $U\colon I\rightarrow\mathcal{C}_{/B}$ to disjointness of its coproduct. Such a diagram 
$U$ is pre-diagonally stable if and only if for all $i,j\in I$ the cocone
\[\Pi(U)(i,j)\colon \mathrm{Fun}_{(i,j)}(D^1,I)\rightarrow\mathcal{C}_{/U_i\times_B U_j}\]
is a semi-descent diagram. Indeed, the $\infty$-category $\mathrm{Fun}_{i,j}(D^1,I)$ is the singleton
$\Delta^0$ whenever $i=j$ and is empty otherwise. In particular, the higher pre-diagonals of $U$ trivialize. If 
$i=j$, the pre-diagonal $\Pi(U)(i,j)$ is the diagram
\[\{1_{U_i}\}\colon\Delta^0\rightarrow\mathcal{C}_{/U_i}\]
which is always a descent diagram. If $i\neq j$, the pre-diagonal $\Pi(U)(i,j)$ is the empty diagram
\[\emptyset\rightarrow\mathcal{C}_{/U_i\times_B U_j}.\]
The colimit of the empty diagram is the initial object $\emptyset\in\mathcal{C}_{/U_i\times_B U_j}$. This means that 
it is colimiting if and only if $\emptyset\simeq U_i\times_B U_j$.
\end{proof}

We obtain the following characterization of extensivity, which gives rise to a definitional pattern that applies 
directly to the structures considered in Sections~\ref{secregtop} and \ref{seccohtop} as well.

\begin{corollary}\label{corextdescalt}
For an $\infty$-category $\mathcal{C}$ the following are equivalent.
\begin{enumerate}
\item The $\infty$-category $\mathcal{C}$ is extensive.
\item For every finite discrete diagram
$U\colon I\rightarrow\mathcal{C}_{/C}$ there is a (unique) factorization
\[I\xrightarrow{U}\mathcal{C}_{/B}\xrightarrow{\Sigma_f}\mathcal{C}_{/C}\]
of $U$ such that $U\colon I\rightarrow\mathcal{C}_{/B}$ is a descent diagram.
\item For every finite discrete diagram
$U\colon I\rightarrow\mathcal{C}_{/C}$ there is a (unique) factorization
\[I\xrightarrow{U}\mathcal{C}_{/B}\xrightarrow{\Sigma_f}\mathcal{C}_{/C}\]
of $U$ such that $U\colon I\rightarrow\mathcal{C}_{/B}$ is a higher covering diagram.
\end{enumerate}
\end{corollary}
\begin{proof}
Straight-forward by Proposition~\ref{propextdesc}.
\end{proof}

For an object $B\in\mathcal{C}$ in an $\infty$-category $\mathcal{C}$ consider the class
\[\mathrm{Ext}(B)=\{U\colon I\rightarrow\mathcal{C}_{/B}\mid I\in\mathrm{Set}_{\omega}, U\text{ is colimiting}\},\]
and let $\mathrm{Ext}$ be the union of the classes $\mathrm{Ext}(B)$ for objects $B\in\mathcal{C}$.

\begin{proposition}
If $\mathcal{C}$ is an extensive $\infty$-category, the class $\mathrm{Ext}$ is a small well-structured colimit 
pre-topology.
\end{proposition}
\begin{proof}
Whenever $\mathcal{C}$ is extensive, we have
\[\mathrm{Ext}(B)=\{U\colon I\rightarrow\mathcal{C}_{/B}\mid I\in\mathrm{Set}_{\omega}, U\text{ is a higher covering diagram}\}\]
by Proposition~\ref{propextdesc}.
Thus, to show that $\mathrm{Ext}$ is a well-structured colimit pre-topology, by Theorem~\ref{prophcdmaxtop} we 
only are to show that discreteness of a diagram preserves reflexivity, stability under base change and
pre-diagonal stability. But the $\infty$-category $\Delta^0$ is discrete, and post-composition with a base change 
functor does not vary the domain of a diagram. Lastly, whenever $I$ is a finite set, then so is
$\mathrm{Fun}_{(i,j)}(D^1,I)$ for all $i,j\in I$. 
\end{proof}

It follows from Theorem~\ref{thmcolimtop} that for all extensive $\infty$-categories $\mathcal{C}$ the localization
$\hat{\mathcal{C}}\rightarrow\mathrm{Sh}_{\mathrm{Ext}}(\mathcal{C})$ is left exact (Definition~\ref{deftopsheaves}). 
While we made the effort to show that the results of Section 3 do apply, they certainly are an overkill to prove left 
exactness of this localization. Indeed, in the following we show that the localization
$\hat{\mathcal{C}}\rightarrow\mathrm{Sh}_{\mathrm{Ext}}(\mathcal{C})$ coincides with the localization at the 
extensive Grothendieck topology on $\mathcal{C}$ (to be defined below) and that the $\infty$-topos
$\mathrm{Sh}_{\mathrm{Ext}}(\mathcal{C})$ is hypercomplete whenever $\mathcal{C}$ is extensive. As a 
corollary we obtain that the $\infty$-topos $\mathrm{Sh}_{\mathrm{Ext}}(\mathcal{C})$ has enough points in the sense 
of \cite[Section 4]{luriedag7} whenever $\mathcal{C}$ itself is furthermore left exact.

\begin{definition}\label{defextgrtop}
Let $\mathcal{C}$ be an extensive $\infty$-category. A sieve $S$ over an object $B\in\mathcal{C}$ is an 
\emph{extensive cover} if it contains a finite family $(U_i\rightarrow B)_{i\leq n}$ of arrows which exhibits $B$ as 
the coproduct of the components $U_i$. The extensive Grothendieck topology on $\mathcal{C}$ is defined to be the 
smallest Grothendieck topology on $\mathcal{C}$ that contains all extensive covering sieves.
\end{definition}

By definition, the set of extensive covering sieves associated to an extensive $\infty$-category $\mathcal{C}$ is 
exactly the set of $(-1)$-truncations of the maps contained in the modulator $\mathrm{Cov}_{\mathrm{Ext}}$. 

\begin{lemma}\label{lemmaextsheaveschar}
Let $\mathcal{C}$ be an $\infty$-category. Then a presheaf $X\in\hat{\mathcal{C}}$ is 
$\mathrm{Cov}_{\mathrm{Ext}}$-local if and only if $X\colon\mathcal{C}^{op}\rightarrow\mathcal{S}$ preserves finite 
products. Thus, whenever $\mathcal{C}$ is small and extensive, the localization
$\mathrm{Sh}_{\mathrm{Ext}}(\mathcal{C})$ consists exactly of the sheaves for the extensive Grothendieck topology. In 
particular, the localization $\hat{\mathcal{C}}\rightarrow\mathrm{Sh}_{\mathrm{Ext}}(\mathcal{C})$ is topological.
\end{lemma}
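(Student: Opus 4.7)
The first claim is a direct Yoneda computation. A presheaf $X$ is local with respect to a cover $\coprod_{i\le n} yC_i \to y(\coprod_{i\le n} C_i)$ in $\mathrm{Cov}_{\mathrm{Ext}}$ iff restriction along it induces an equivalence
$$X\bigl(\textstyle\coprod_i C_i\bigr) \simeq \hat{\mathcal{C}}\bigl(y(\textstyle\coprod_i C_i), X\bigr) \xrightarrow{\simeq} \hat{\mathcal{C}}\bigl(\textstyle\coprod_i yC_i, X\bigr) \simeq \prod_i X(C_i),$$
which is exactly the condition that $X\colon \mathcal{C}^{op}\to\mathcal{S}$ sends the coproduct $\coprod_i C_i$ of $\mathcal{C}$ to the product $\prod_i X(C_i)$ of $\mathcal{S}$. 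The nullary case $n=0$ becomes $X(\emptyset_{\mathcal{C}})\simeq\ast$, corresponding to preservation of the empty product.

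For the second claim, assume $\mathcal{C}$ is extensive. The extensive Grothendieck topology on $\mathcal{C}$ is generated by finite families of coproduct inclusions $(\iota_i\colon C_i\to\coprod_j C_j)_{i\le n}$, so a presheaf $X$ is an extensive sheaf iff for each such family $X(\coprod_j C_j)$ is the corresponding matching object, computed as the limit of $X$ evaluated along the \v{C}ech nerve of the assembled map $\coprod_i C_i\to\coprod_j C_j$. I will argue that this matching limit collapses to $\prod_i X(C_i)$, and hence that the extensive sheaf condition matches product preservation. By disjointness and universality of coproducts, the pullback $C_i\times_{\coprod_j C_j}C_{i'}$ is $\emptyset_{\mathcal{C}}$ for $i\ne i'$ and is equivalent to $C_i$ for $i=i'$, and the analogous description propagates to every higher simplicial degree of the \v{C}ech nerve. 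Once a sheaf is known to send $\emptyset_{\mathcal{C}}$ to the terminal space (forced by the nullary cover), the cosimplicial object $X(\check{C}(\coprod_i C_i\to\coprod_j C_j))$ reduces termwise to coproducts of copies of $X(C_i)$, and its limit is exactly $\prod_i X(C_i)$. Combined with the first claim, this identifies $\mathrm{Sh}_{\mathrm{Ext}}(\mathcal{C})$ with the $\infty$-category of sheaves for the extensive topology.

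The main obstacle is the \v{C}ech nerve computation: one has to verify that disjointness at the level of pairwise double intersections propagates to all higher simplicial degrees. This is handled by the fact that in an extensive $\infty$-category universal coproducts are preserved by pullback, yielding $\coprod_i C_i\times_{\coprod_j C_j}\coprod_{i'}C_{i'}\simeq\coprod_{i,i'}(C_i\times_{\coprod_j C_j}C_{i'})$, after which the pairwise disjointness description reduces the double intersection to $\coprod_i C_i$; iterating this for higher levels yields the desired collapse. An alternative route via Corollary~\ref{cormodbase} would instead verify that $\mathrm{Cov}_{\mathrm{Ext}}$ is a modulator, compute its $(-1)$-truncation to recover the extensive topology, and rule out a cotopological remainder; this approach would make the topological character of the localization transparent, but for the bare identification of sheaf $\infty$-categories stated in the lemma the direct comparison above is the cleaner path.
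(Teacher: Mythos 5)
Your proposal is correct and follows essentially the same route as the paper: identify the extensive covering sieves as the $(-1)$-truncations of the maps in $\mathrm{Cov}_{\mathrm{Ext}}$, compute them as geometric realizations of \v{C}ech nerves, and use disjointness, universality and monicity of coproduct inclusions (plus $X(\emptyset_{\mathcal{C}})\simeq\ast$) to collapse the matching limit to $\prod_i X(C_i)$. The only caveat is a wording slip: the \v{C}ech nerve must be taken in $\hat{\mathcal{C}}$ of the map $\coprod_i yC_i\to y(\coprod_i C_i)$ (the assembled map in $\mathcal{C}$ itself is an equivalence), and its terms, after applying $\hat{\mathcal{C}}(-,X)$, are \emph{products} of copies of the $X(C_i)$ and $X(\emptyset_{\mathcal{C}})$ — but your subsequent computation makes clear this is what you intend.
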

\begin{proof}
On the one hand, the fact that the $\mathrm{Cov}_{\mathrm{Ext}}$-local presheaves are exactly the finite product 
preserving functors $X\colon\mathcal{C}^{op}\rightarrow\mathcal{S}$ holds by construction. On the other hand, if
$\mathcal{C}$ is extensive, the extensive Grothendieck topology on $\mathcal{C}$ is generated by the covering 
families given by finite coproduct injections $\{U_i\rightarrow B\mid i\leq n\}$ whenever
$B\simeq\coprod_{i\leq n}U_i$. As the corresponding sieves $S\hookrightarrow yB$ are exactly the $(-1)$-truncations 
of the maps $m\colon\coprod_{i\leq n}y(U_i)\rightarrow yB$, they can be computed by geometric realization of 
the \v{C}ech-nerves $\check{C}(m)$ of $m$ \cite[Proposition 6.2.3.4, Lemma 6.2.3.18]{luriehtt}. It follows that a 
presheaf $X$ is a sheaf for the extensive Grothendieck topology if and only if the natural map
\[X(B)\rightarrow\mathrm{lim}X(\hat{C}(m))\]
is an equivalence for all $m\in \mathrm{Cov}_{\mathrm{Ext}}(B)$, $B\in\mathcal{C}$. Using that coproducts in
$\mathcal{C}$ are disjoint and that all coproduct injections are monomorphisms, one shows that the limit on the right 
hand side is the product $\prod_{i\leq n}X(U_i)$. Thus, $X\colon\mathcal{C}^{op}\rightarrow\mathcal{S}$ is a sheaf 
for the extensive Grothendieck topology if and only if it preserves finite products.
\end{proof}

\begin{proposition}\label{propexthyper}
Let $\mathcal{C}$ be a small extensive $\infty$-category. Then the geometric inclusion
$\iota\colon\mathrm{Sh}_{\mathrm{Ext}}(\mathcal{C})\hookrightarrow\hat{\mathcal{C}}$ preserves sifted colimits. In 
particular, it preserves effective epimorphisms.
\end{proposition}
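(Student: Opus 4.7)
The plan is to exploit the characterization from Lemma~\ref{lemmaextsheaveschar}: $\mathrm{Sh}_{\mathrm{Ext}}(\mathcal{C})$ consists exactly of those presheaves $X\colon\mathcal{C}^{op}\to\mathcal{S}$ that send finite coproducts in $\mathcal{C}$ to finite products in $\mathcal{S}$. Combined with the fact that colimits in $\hat{\mathcal{C}}$ are computed pointwise, the statement reduces to a commutation of limits and colimits in $\mathcal{S}$.

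The key input I would cite is that sifted colimits in $\mathcal{S}$ commute with finite products (HTT 5.5.8.11). Given a sifted diagram $X\colon I\to\mathrm{Sh}_{\mathrm{Ext}}(\mathcal{C})$, let $Y:=\mathrm{colim}_{I}(\iota\circ X)$ denote its colimit in $\hat{\mathcal{C}}$. I would check directly that $Y$ preserves finite products, so that $Y$ already lies in the essential image of $\iota$: for a finite family $(C_i)_{i\leq n}$ in $\mathcal{C}$,
\[
Y\Bigl(\coprod_{i\leq n}C_i\Bigr)\simeq\underset{I}{\mathrm{colim}}\,(\iota X)\Bigl(\coprod_{i\leq n}C_i\Bigr)\simeq\underset{I}{\mathrm{colim}}\,\prod_{i\leq n}(\iota X)(C_i)\simeq\prod_{i\leq n}\underset{I}{\mathrm{colim}}\,(\iota X)(C_i)\simeq\prod_{i\leq n}Y(C_i),
\]
where the second equivalence uses that each $X_j$ is a sheaf and the third uses that $I$ is sifted. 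This shows $Y\in\mathrm{Sh}_{\mathrm{Ext}}(\mathcal{C})$, and the universal property of $Y$ in $\hat{\mathcal{C}}$ restricts to the universal property of $\mathrm{colim}_{I}X$ in $\mathrm{Sh}_{\mathrm{Ext}}(\mathcal{C})$ via the full inclusion $\iota$. Hence $\iota$ preserves sifted colimits.

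For the final sentence, I would recall that in any $\infty$-topos a map $f\colon X\to Z$ is an effective epimorphism if and only if $Z$ is the colimit of its \v{C}ech-nerve $\check{C}(f)\colon\Delta^{op}\to\mathcal{E}$. Since $\Delta^{op}$ is sifted, the first part of the proposition together with the fact that $\iota$ preserves finite limits (being the right adjoint of a left exact reflection) implies that $\iota$ preserves both the \v{C}ech-nerve and its colimit, and hence carries effective epimorphisms to effective epimorphisms.

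The only real obstacle is making sure no size or technical issue hides in the pointwise computation; this is standard, and I would simply invoke the pointwise-colimit formula for presheaf $\infty$-categories. The substantive mathematical content is entirely the commutation of sifted colimits with finite products in $\mathcal{S}$, everything else being formal.
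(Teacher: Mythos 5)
Your proof is correct, and it rests on the same computational core as the paper's argument -- the commutation of sifted colimits with finite products in $\mathcal{S}$ -- but it packages that core differently. The paper works through the plus-construction of \cite{as_soa}: it shows that the natural map $\mathrm{colim}(\iota\circ F)\rightarrow(\mathrm{colim}(\iota\circ F))^{+}$ is a pointwise equivalence, using the formula $(X^{+})(C)\simeq\mathrm{colim}_{w\in \mathrm{Cov}_{\mathrm{Ext}}(C)^{op}}\hat{\mathcal{C}}(\mathrm{dom}(w),X)$ together with the fact that a presheaf is a sheaf if and only if it is a fixed point of $(-)^{+}$. You instead invoke Lemma~\ref{lemmaextsheaveschar} to identify the essential image of $\iota$ with the finite-product-preserving presheaves and verify that condition directly on the pointwise colimit; the reflective-localization formality you appeal to at the end (a colimit computed in the ambient category that happens to land in the reflective subcategory is already the colimit there, since sheafification fixes it) is standard and correct. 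Your route is shorter and bypasses the modulator machinery entirely, whereas the paper's route is the template that transfers to settings where no clean pointwise characterization of the local objects is available -- indeed Remark~\ref{remrepcolimits} reuses exactly that plus-construction argument for arbitrary representable Id-modulators in Section 4. The deduction of preservation of effective epimorphisms via siftedness of $\Delta^{op}$ and left exactness of $\iota$ is identical in both proofs. A cosmetic point: the paper cites \cite[Remark 5.5.8.12]{luriehtt} for the commutation statement rather than 5.5.8.11, but the fact you need is the standard one either way.
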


\begin{proof}
By Lemma~\ref{lemmaextsheaveschar} we are to show that a sifted colimit of finite limit preserving presheaves
is again finite limit preserving. As colimits of presheaves are computed pointwise, this reduces to the fact that finite limits commute with sifted colimits in the $\infty$-category of spaces \cite[Remark 5.5.8.12]{luriehtt}.
\end{proof}

\begin{corollary}\label{corexthyper}
Let $\mathcal{C}$ be a small extensive $\infty$-category. Then the $\infty$-topos
$\mathrm{Sh}_{\mathrm{Ext}}(\mathcal{C})$ is hypercomplete.
\end{corollary}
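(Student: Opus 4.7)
The plan is to leverage the fact that the presheaf $\infty$-topos $\hat{\mathcal{C}}$ is itself hypercomplete, and to reduce hypercompleteness of $\mathrm{Sh}_{\mathrm{Ext}}(\mathcal{C})$ to the behaviour of $\iota$ on $\infty$-connected maps. Recall that an $\infty$-topos is hypercomplete if and only if every $\infty$-connected (equivalently, $\infty$-connective) morphism is already an equivalence; recall further that a map $f$ in an $\infty$-topos is $\infty$-connected exactly when each of its iterated diagonals $\Delta^{n}(f)$ is an effective epimorphism for every $n \geq 0$.

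With this characterization in hand, the first step is to observe that $\iota$ preserves pullbacks since it is the right adjoint of a left exact localization, and therefore commutes with the formation of diagonals: $\iota(\Delta^{n}(f)) \simeq \Delta^{n}(\iota f)$ for every $n$. Combining this with Proposition~\ref{propexthyper}, which guarantees that $\iota$ sends effective epimorphisms in $\mathrm{Sh}_{\mathrm{Ext}}(\mathcal{C})$ to effective epimorphisms in $\hat{\mathcal{C}}$, yields that if $f$ is $\infty$-connected in $\mathrm{Sh}_{\mathrm{Ext}}(\mathcal{C})$ then all $\Delta^{n}(\iota f)\simeq\iota(\Delta^{n}(f))$ are effective epimorphisms, so $\iota(f)$ is $\infty$-connected in $\hat{\mathcal{C}}$.

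Second, I would invoke hypercompleteness of the presheaf $\infty$-category $\hat{\mathcal{C}} = \mathrm{Fun}(\mathcal{C}^{op},\mathcal{S})$: limits, colimits and truncations there are computed pointwise, and the $\infty$-topos $\mathcal{S}$ of spaces is hypercomplete, so $\infty$-connected maps in $\hat{\mathcal{C}}$ are equivalences. Applied to $\iota(f)$, this gives that $\iota(f)$ is an equivalence. Since $\iota$ is fully faithful, hence conservative, $f$ was already an equivalence in $\mathrm{Sh}_{\mathrm{Ext}}(\mathcal{C})$, which is what was required.

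The one step that requires a little care is the first reduction, namely the claim that preserving effective epimorphisms together with preserving pullbacks suffices to preserve $\infty$-connectedness; this is where the iterated-diagonal characterization of $\infty$-connected maps is crucial, and where Proposition~\ref{propexthyper} is doing the real work. Everything else is essentially formal.
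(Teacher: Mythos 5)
Your proof is correct and follows essentially the same route as the paper: both arguments use the characterization of $\infty$-connected maps via their higher diagonals being effective epimorphisms, combine the limit-preservation of $\iota$ with Proposition~\ref{propexthyper} to see that $\iota$ preserves $\infty$-connected maps, and then conclude via hypercompleteness of the presheaf $\infty$-topos and conservativity of $\iota$. The only cosmetic remark is that $\iota$ preserves pullbacks simply because it is a right adjoint, not because the localization is left exact, but this does not affect the argument.
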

\begin{proof}
Recall that a map in an $\infty$-topos is $\infty$-connected if and only if all its higher diagonals are effective 
epimorphisms (this follows from \cite[Proposition 6.5.1.19]{luriehtt}). The inclusion
$\iota\colon\mathrm{Sh}_{\mathrm{Ext}}(\mathcal{C})\hookrightarrow\hat{\mathcal{C}}$ preserves finite limits, and we 
have seen in Proposition~\ref{propexthyper} that it preserves effective epimorphisms as well. Thus, if
$f\in\mathrm{Sh}_{\mathrm{Ext}}(\mathcal{C})$ is $\infty$-connected, then so is $\iota(f)\in\hat{\mathcal{C}}$. But 
presheaf $\infty$-toposes are hypercomplete (since an $\infty$-connected map in a presheaf $\infty$-category is 
pointwise $\infty$-connected and hence a (pointwise) equivalence by Whitehead's Theorem, see \cite[Remark 6.5.4.7]{luriehtt}), and so 
$\iota(f)$ is an equivalence. Thus, $f\in\mathrm{Sh}_{\mathrm{Ext}}(\mathcal{C})$ is an equivalence as well.
\end{proof}

\begin{corollary}\label{corextpoints}
Let $\mathcal{C}$ be a small lextensive $\infty$-category, i.e.\ $\mathcal{C}$ is extensive and left exact. Then the
$\infty$-topos $\mathrm{Sh}_{\mathrm{Ext}}(\mathcal{C})$ has enough points. These are up to equivalence exactly the 
left exact and finite coproduct preserving functors of type $F\colon\mathcal{C}\rightarrow\mathcal{S}$.
\end{corollary}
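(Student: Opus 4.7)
The plan is to identify the points using the universal property of the presheaf $\infty$-topos combined with the topological character of the localization established in Corollary~\ref{cormodbase} and Lemma~\ref{lemmaextsheaveschar}, and then to deduce the enough-points claim by combining hypercompleteness from Corollary~\ref{corexthyper} with an $\infty$-categorical version of Deligne's theorem.

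For the identification, the localization $\hat{\mathcal{C}}\to\mathrm{Sh}_{\mathrm{Ext}}(\mathcal{C})$ is topological and in particular left exact, so points of the sheaf $\infty$-topos correspond to those points of $\hat{\mathcal{C}}$ whose inverse image functors invert every arrow in $\mathrm{Cov}_{\mathrm{Ext}}$. By the universal property of presheaf $\infty$-topoi (\cite[Theorem~5.1.5.6]{luriehtt}) together with lextensivity of $\mathcal{C}$, points of $\hat{\mathcal{C}}$ are equivalent, via restriction along $y$ and cocontinuous Yoneda extension, to left exact functors $F\colon\mathcal{C}\to\mathcal{S}$. The cocontinuous extension sends the cover $\coprod_{i\leq n}yC_i\to y(\coprod_{i\leq n}C_i)$ to the canonical comparison map $\coprod_{i\leq n}F(C_i)\to F(\coprod_{i\leq n}C_i)$, so the inversion condition specialises precisely to preservation of finite coproducts.

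For the enough-points claim I will invoke Lurie's $\infty$-categorical version of Deligne's theorem (cf.\ \cite{luriedag7}), asserting that every coherent hypercomplete $\infty$-topos has enough points. Hypercompleteness is Corollary~\ref{corexthyper}. Coherence of $\mathrm{Sh}_{\mathrm{Ext}}(\mathcal{C})$ follows because the site $(\mathcal{C},\tau_{\mathrm{Ext}})$ is finitary -- every generating cover is a finite family of pairwise disjoint coproduct inclusions -- and lextensivity of $\mathcal{C}$ supplies the requisite quasi-compactness and quasi-separation of the representables, which then serve as coherent generators of the $\infty$-topos.

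The main obstacle will be the verification of coherence in Lurie's precise $\infty$-categorical sense, which demands $n$-coherence of the representables at every level $n\geq 0$, and not merely finiteness of covers at the site level. I expect the inductive step to proceed from the finitary character of $\tau_{\mathrm{Ext}}$ together with lextensivity -- pullbacks of finite coproducts of representables along representable maps remain finite coproducts of representables of the same form -- but a careful alignment with the technical definitions of \cite{luriedag7} is required.
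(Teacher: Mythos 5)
Your argument is correct and is essentially the paper's own proof: the paper likewise combines hypercompleteness (Corollary~\ref{corexthyper}) with the $\infty$-categorical Deligne theorem via coherence of the finitary extensive site (citing \cite[Corollary 3.22, Theorem 4.1]{luriedag7}, the first of which disposes of the $n$-coherence verification you flag as the remaining obstacle), and identifies the points by left Kan extension along the Yoneda embedding exactly as you do.
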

\begin{proof}
The first statement follows immediately from Corollary~\ref{corexthyper} together with
\cite[Corollary 3.22]{luriedag7} and \cite[Theorem 4.1]{luriedag7}. The second statement is a standard argument via 
left Kan extension along the Yoneda embedding, see
\cite[Lemma 5.1.5.5, Proposition 5.5.4.20 and Proposition 6.1.5.2]{luriehtt}.
\end{proof}


\section{Coherent $\infty$-categories and their sheaves}\label{secregtop}

The most classical example of a Grothendieck topology is the ($\kappa$-)coherent topology on a ($\kappa$-)coherent
category, and hence the regular topology on a regular category in particular \cite{elephant}.
In this section we define straight-forward generalizations of these notions in the $\infty$-categorical context. We
show that the associated sheaf theories are each generated by a well-structured colimit pre-topology as defined in 
Section~\ref{secdesc}, specifically given by the class of all higher covering diagrams indexed by the sorted 
Lawvere theory of $I$-indexed collections of objects for $\kappa$-small sets $I$. 

\begin{remark}
In this section and the following sections, we depart in our definitions from the ``global'' conventions of the 
literature by dropping the assumption of a terminal object unless explicitly stated. We simply do so because the vast 
majority of the constructions merely requires left exactness of the slices of $\mathcal{C}$ rather than of
$\mathcal{C}$ itself. 
\end{remark}

\begin{definition}\label{defregcat}
An $\infty$-category $\mathcal{C}$ with pullbacks is \emph{locally regular} if for every morphism
$f\colon E\rightarrow B$ in $\mathcal{C}$ its \v{C}ech nerve
$\check{C}(f)\colon\Delta^{op}\rightarrow\mathcal{C}_{/B}$ admits a universal colimit $|\check{C}(f)|\rightarrow B$ such that the induced factorization $\check{C}(f)\colon\Delta^{op}\rightarrow\mathcal{C}_{/|\check{C}(f)|}$ is the
\v{C}ech nerve of the factorization $f\colon E\rightarrow|\check{C}(f)|$.
\end{definition}

Universality of the colimit of the \v{C}ech nerve $\check{C}(f)\colon\Delta^{op}\rightarrow\mathcal{C}_{/B}$ 
associated to a morphism $f\colon E\rightarrow B$ expresses that for every map 
$g\colon C\rightarrow B$, the natural map $|\check{C}(g^{\ast}f)|\rightarrow g^{\ast}|\check{C}(f)|$ is an 
equivalence. In other words, for every morphism $f\colon E\rightarrow B$ in $\mathcal{C}$ its \v{C}ech nerve
$\check{C}(f)\colon\Delta^{op}\rightarrow\mathcal{C}_{/|\check{C}(f)|}$ is a semi-descent diagram. The latter 
condition in Definition~\ref{defregcat} states that the underlying internal groupoid of any \v{C}ech nerve in
$\mathcal{C}$ is effective \cite[Definition 6.1.2.14]{luriehtt}.
We note that the following standard definitions can be expressed in any locally regular $\infty$-category.

\begin{definition}\label{defeffepi}
A map $f\colon E\rightarrow B$ in a locally regular $\infty$-category $\mathcal{C}$ is an \emph{effective 
epimorphism} if $|\check{C}(f)|\rightarrow B$ is an equivalence. A map $f$ in a locally regular $\infty$-category
$\mathcal{C}$ is \emph{$\infty$-connected} if all its higher diagonals (including the $0$-th) are effective 
epimorphisms. An object $C$ in $\mathcal{C}$ is \emph{hypercomplete} if $C$ is local with respect to all
$\infty$-connected maps in $\mathcal{C}$.
\end{definition}

Certainly, every $\infty$-topos $\mathcal{C}$ is regular \cite[Remark 6.2.3.2]{luriehtt}. In this case, a map $f$ in
$\mathcal{C}$ is $\infty$-connected if and only if it is an effective epimorphism and all its internal homotopy 
groups vanish \cite[Proposition 6.5.1.18]{luriehtt}. The latter condition is in fact the definition of
$\infty$-connectedness in \cite{luriehtt}. Following the proof of \cite[Proposition 6.2.3.4]{luriehtt}, one 
sees that the natural map $|\check{C}(f)|\rightarrow B$ is always $(-1)$-truncated. It also follows that the map
$E\rightarrow |\check{C}(f)|$ is an effective epimorphism for every $f\colon E\rightarrow B$ in $\mathcal{C}$.
In particular, the class of effective epimorphisms in a locally regular $\infty$-category $\mathcal{C}$ is stable 
under base change and contains all equivalences. Furthermore, the pair of effective
epimorphisms and $(-1)$-truncated maps form a factorization system on $\mathcal{C}$. The $(-1)$-truncated morphism 
$f_{-1}\colon|\check{C}(f)|\hookrightarrow B$ associated to an arrow $f\colon E\rightarrow B$ is the
$(-1)$-truncation of $f$ in $\mathcal{C}$. That means, the inclusion $\mathrm{Sub}(B)\hookrightarrow\mathcal{C}_{/B}$ 
of the poset of $(-1)$-truncated objects in $\mathcal{C}_{/B}$ exhibits a right adjoint
\begin{align}\label{equdeftruncfunc}
\tau_{-1}\colon\mathcal{C}_{/B}\rightarrow\mathrm{Sub}(B)
\end{align}
that maps an arrow $f$ to the $(-1)$-truncation $f_{-1}$.

More generally, one considers the following $\kappa$-many-object version of regularity.
For a fixed regular cardinal $\kappa$ and any $\kappa$-small set $I$ (i.e.\ a set of size \emph{strictly} less than
$\kappa$) consider the simplicial $\kappa$-small set
\[ I^{|(\cdot)|}\colon\Delta^{op}\rightarrow\mathrm{Set}_{\kappa}\]
and its Grothendieck construction $\sum_{[n]\in\Delta^{op}}I^{|[n]|}$ discretely fibered over $\Delta$. 
Let $\mathrm{FinSet}_{+}$ denote the category of non-empty finite sets and let
$\sigma\colon\Delta\rightarrow\mathrm{FinSet}_{+}$ be the canonical inclusion (which is bijective on objects). 
The simplicial set $I^{|[\cdot]|}\colon\Delta^{op}\rightarrow\mathrm{Set}_{\kappa}$ admits an extension along
$\sigma$ to a symmetric simplicial set
\[I^{|[\cdot]|}\colon\mathrm{FinSet}_{+}^{op}\rightarrow\mathrm{Set}_{\kappa}\]
as can be directly seen via \cite[Theorem 4.2]{grandissym} by mapping the main transpositions of a non-empty finite 
set $[n]$ to the according permutations of components of tuples in $I^{|[n]|}$. (Existence of this extension however 
also follows from Lemma~\ref{lemmasymnerverkan} below, as the simplicial object
$I^{|[\cdot]|}\colon\Delta^{op}\rightarrow\mathrm{Set}_{\kappa}$ itself is the \v{C}ech nerve of the function 
$I\rightarrow\ast$ in $\mathrm{Set}_{\kappa}$, and the latter has pullbacks.) We obtain the following pullback of 
discretely fibered Grothendieck constructions.
\begin{align}\label{diagpropcohcovcotoptop}
\begin{gathered}
\xymatrix{
\sum\limits_{[n]\in\Delta^{op}}I^{|[n]|}\ar@{^(->}[r]^{}\ar@{->>}[d]\ar@{}[dr]|(.3){\pbs} & \sum\limits_{[n]\in\mathrm{FinSet}_{+}^{op}}I^{|[n]|}\ar@{->>}[d] \\
\Delta^{op}\ar@{^(->}[r]_{\sigma} & \mathrm{FinSet}_{+}^{op}
}
\end{gathered}
\end{align}

\begin{notation}
In the following we denote the (bijective on objects) top inclusion by
$\sigma\colon \Delta(I)^{op}\hookrightarrow\mathrm{FS}(I)_+^{op}$.
\end{notation}

\begin{lemma}\label{lemmamultinerve}
Let $\mathcal{C}$ be an $\infty$-category with pullbacks and let $I$ be a set. To every family
\begin{align}\label{equlemmamultinervefam}
F=\{E_i\rightarrow B\mid i\in I\}
\end{align}
of arrows with a common base $B$ in $\mathcal{C}$ -- considered as a discrete diagram
$F\colon I\rightarrow\mathcal{C}_{/B}$ -- the right Kan extension of $F$ along the canonical 
embeddings
\begin{align}\label{equlemmamultinerve}
\xymatrix{
\{0\}\times I\ar@{^(->}[r]_{\iota_{\Delta}}\ar@/^2pc/@{^(->}[rr]^{\iota_{\mathrm{FS}}} & \Delta(I)^{op}\ar@{^(->}[r]_{\sigma} & \mathrm{FS}_+(I)^{op}
}
\end{align}
exist and restrict back to $F$ up to equivalence. In both cases, if we denote the according right Kan extension again 
by $F$, the natural map
\begin{align}\label{explecohtopequ1}
F([n],\vec{i})\rightarrow F([0],i_0)\times_{C}\dots\times_{C}F([0],i_n)
\end{align}
induced by the points $\{j\}\colon[0]\rightarrow [n]$ for $j\leq n$ is an equivalence for all $n\geq 0$ and
$\vec{i}\in I^{|[n]|}$. A given functor $U\colon\mathrm{FS}_+(I)^{op}\rightarrow\mathcal{C}_{/B}$
is the right Kan extension of its restriction along $\iota_{\mathrm{FS}}$ if and only if it preserves non-empty 
finite products.
\end{lemma}

\begin{proof}
We formulate the proof for the composite inclusion $\iota_{\mathrm{FS}}$ in (\ref{equlemmamultinerve}); the proof for 
the inclusion $\iota_{\Delta}$ is completely analogous (barring the last statement). Thus, we first 
note that the inclusion $\iota_{\mathrm{FS}}\colon I\hookrightarrow\mathrm{FS}_+(I)^{op}$ is fully faithful. As $I$ 
is discrete, so is the under-category
$([n],\vec{i})_{/\iota_{\mathrm{FS}}}:=(\{[0]\}\times I)\times_{\mathrm{FS}(I)}\mathrm{FS}(I)_{([n],\vec{i})/}$ for 
every object $([n],\vec{i})\in\mathrm{FS}_+(I)^{op}$. This under-category is furthermore finite, because the tuple
$\vec{i}$ has finite length. Since $\mathcal{C}$ has pullbacks, the slice $\mathcal{C}_{/C}$ has products. For any 
given family (\ref{equlemmamultinervefam}), it follows that for 
every $([n],\vec{i})\in\mathrm{FS}_+(I)^{op}$, the functor
\[([n],\vec{i})_{/\iota}\twoheadrightarrow \{[0]\}\times I\xrightarrow{F([0],-)}\mathcal{C}_{/C}\]
has a limit in $\mathcal{C}_{/C}$. By \cite[Lemma 4.3.2.13]{luriehtt} it follows that $F$ admits a pointwise right 
Kan extension $F\colon\mathrm{FS}_+(I)^{op}\rightarrow\mathcal{C}_{/C}$ along $\iota_{\mathrm{FS}}$. 
By \cite[Definition 4.3.2.2]{luriehtt}, for all tuples $([n],\vec{i})$ we have equivalences
\begin{align}\label{diagpropcohcovcotoptop2}
\notag F([n],\vec{i}) & \simeq F(\mathrm{lim}\left(([n],\vec{i})_{/\iota_{\mathrm{FS}}}\rightarrow \{[0]\}\times I\hookrightarrow\mathrm{FS}_+(I)^{op}\right))\\
& \simeq \mathrm{lim}\left(([n],\vec{i})_{/\iota_{\mathrm{FS}}}\rightarrow \{[0]\}\times I\hookrightarrow\mathrm{FS}_+(I)^{op}\xrightarrow{F}\mathcal{C}_{/C}\right) \\
\notag & \simeq F([0],i_0)\times_C\dots\times_C F([0],i_n).
\end{align}
In particular, the restriction $F|_{\{0\}\times I}$ is equivalent to the original family $F$.
Furthermore, for any tuple $([n],\vec{i})$, the limit of the composition
\[([n],\vec{i})_{/\iota_{\mathrm{FS}}}\rightarrow \{[0]\}\times I\hookrightarrow\mathrm{FS}_+(I)^{op}\]
is just $([n],\vec{i})$ itself. In other words, the full sub-$\infty$-category $\{0\}\times I$ generates
$\mathrm{FS}_+(I)^{op}$ under non-empty finite products. It follows that whenever
$U\colon\mathrm{FS}_+(I)^{op}\rightarrow\mathcal{C}$ is any non-empty finite product-preserving functor, then
$U$ is the right Kan extension of its restriction $U|_{\{0\}\times I}$.
\end{proof}

\begin{notation}\label{notsymcnerve}
We will refer to the right Kan extension of a family $F=\{E_i\rightarrow B\mid i\in I\}$ from 
Lemma~\ref{lemmamultinerve} along $\iota_{\Delta}\colon I\hookrightarrow\Delta(I)^{op}$ as the \v{C}ech nerve
$\check{C}(F)\colon\Delta(I)^{op}\rightarrow\mathcal{C}_{/B}$ of $F$. We will refer to its right Kan extension along
$\iota_{\mathrm{FS}}\colon I\hookrightarrow\mathrm{FS}_+(I)^{op}$ as the symmetric \v{C}ech nerve
$\check{\Sigma}(F)\colon\Delta(I)^{op}\rightarrow\mathcal{C}_{/B}$ of $F$. 
\end{notation}

By construction, Notation~\ref{notsymcnerve} recovers (defines) the (symmetric) \v{C}ech nerve of a single arrow
$f\colon E\rightarrow B$ in $\mathcal{C}$ whenever $I$ has cardinality $1$. 

\begin{lemma}\label{lemmasymnerverkan}
Let $\mathcal{C}$ be an $\infty$-category with pullbacks and $I$ be a set. For any family
$F=\{E_i\rightarrow B\mid i\in I\}$ of arrows in $\mathcal{C}$, the symmetric 
\v{C}ech nerve $\check{\Sigma}(F)\colon\mathrm{FS}_+(I)^{op}\rightarrow\mathcal{C}_{/B}$ is the right Kan extension 
of the \v{C}ech nerve $\check{C}(F)\colon\Delta(I)^{op}\rightarrow\mathcal{C}_{/B}$ of $F$ along
$\sigma\colon \Delta(I)^{op}\hookrightarrow\mathrm{FS}(I)_+^{op}$. In particular,
$\sigma^{\ast}\check{\Sigma}(F)\simeq\check{C}(F)$.
\end{lemma}
\begin{proof}
Both $\check{\Sigma}(F)$ and $\check{C}(F)$ are the global right Kan extension of
$F\colon I\rightarrow\mathcal{C}_{/B}$ along the inclusions $\iota_{\mathrm{FS}}$ and $\iota_{\Delta}$, respectively.
It formally follows that $\check{\Sigma}(F)$ is the pointwise right Kan extension of $\check{C}(F)$ simply because
$\iota_{\mathrm{FS}}=\sigma\iota_{\Delta}$. That is, briefly, because for any pair of functors
$L_1\colon\mathcal{D}\rightarrow\mathcal{E}$ and $L_2\colon\mathcal{E}\rightarrow\mathcal{F}$, if $L_2$ has a right 
adjoint $R_2$ and the composition $L_2L_1$ has a right adjoint $R_{12}$, then $R_{12}$ is an $R_1$-relative right 
adjoint of $L_1$. It follows that $\sigma^{\ast}\check{\Sigma}(F)\simeq\check{C}(F)$, as $\sigma^{\ast}$ preserves 
limits and both the right Kan extension $\check{C}(F)$ as well as the right Kan extension $\check{\Sigma}(F)$ are 
determined by (\ref{explecohtopequ1}).
\end{proof}

\begin{definition}\label{explecohtop}
Let $\mathcal{C}$ be an $\infty$-category with pullbacks and let $I$ be a set. A family
$F=\{E_i\rightarrow B\mid i\in I\}$ of arrows in $\mathcal{C}$ with common base $B$ is \emph{jointly 
effective epic} if $\check{C}(F)\colon \Delta(I)^{op}\rightarrow \mathcal{C}_{/B}$ is colimiting. The family $F$ is 
\emph{universally jointly effective epic} if $\check{C}(F)\colon \Delta(I)^{op}\rightarrow \mathcal{C}_{/B}$ is a 
semi-descent diagram.
\end{definition}

\begin{proposition}\label{lemmaocdran}
Let $\mathcal{C}$ be an $\infty$-category with pullbacks. For every set $I$ and every object $B\in\mathcal{C}$, 
restriction along the inclusion
$\sigma\colon\Delta(I)^{op}\hookrightarrow\mathrm{FS}(I)_+^{op}$ induces a bijection between 
the class
\[\{\check{C}(F)\colon\Delta(I)^{op}\rightarrow\mathcal{C}_{/B}\mid F\colon I\rightarrow\mathcal{C}_{/B}\text{ is universally jointly effective epic}\},\] 
and
the class
\[\{U\colon\mathrm{FS}(I)_+^{op}\rightarrow\mathcal{C}_{/B}\mid U\text{ is a higher
covering diagram}\}.\] 
\end{proposition}
\begin{proof}
By Lemma~\ref{lemmamultinerve} we are to show that, first, a family $F=\{E_i\rightarrow B\mid i\in I\}$
of arrows with a common base $B$ is jointly effective epic if and only if its right Kan extension along the composite
embedding
\[\iota\colon\{0\}\times I\hookrightarrow\Delta(I)^{op}\hookrightarrow\mathrm{FS}_+(I)^{op}\]
is a higher covering diagram, and, second, that every $\mathrm{FS}_+(I)^{op}$-indexed higher covering diagram arises 
in this way.

The inclusion $\sigma\colon\Delta^{op}\hookrightarrow\mathrm{FinSet}_{+}^{op}$ is cofinal as can be shown by the same 
proof of \cite[Lemma 6.5.3.7]{luriehtt}. Since Kan fibrations are smooth \cite[Proposition 4.1.2.15]{luriehtt}, it 
follows that the pullback $\sigma\colon \Delta(I)^{op}\hookrightarrow\mathrm{FS}(I)^{op}$ in 
(\ref{diagpropcohcovcotoptop}) is cofinal as well \cite[Remark 4.1.2.10]{luriehtt}. In particular, by 
Lemma~\ref{lemmadesccofstable} restriction along $\sigma$ preserves and reflects semi-descent diagrams.

Let us first show that every higher covering diagram $U\colon\mathrm{FS}_+(I)^{op}\rightarrow\mathcal{C}_{/B}$
is the symmetric \v{C}ech nerve of the family $\sigma^{\ast}U\colon I\rightarrow\mathcal{C}_{/B}$, and that the latter is universally jointly effective epic over $B$.
Therefore, we note that the presheaf $I^{|[\cdot]|}\colon\mathrm{FinSet_+^{op}}\rightarrow\mathrm{Set}$ is an indexed 
category with pullbacks and non-empty finite products whose domain $\mathrm{FinSet_+^{op}}$ has pullbacks 
and non-empty finite products. Along the lines of Remark~\ref{remrfibwellind} it follows that the associated total 
category $\mathrm{FS}_+(I)^{op}$ has finite non-empty products and pullbacks, too. 
Hence, the $\mathrm{FS}_+(I)^{op}$-indexed higher covering diagrams are exactly the non-empty finite product and 
pullback-preserving semi-descent diagrams (Example~\ref{explecovprod}). In particular, every higher covering diagram 
$U\colon\mathrm{FS}_+(I)^{op}\rightarrow\mathcal{C}_{/B}$ is the right Kan extension
$\check{\Sigma}(U|_{\{0\}\times I})$ (Lemma~\ref{lemmamultinerve}). The \v{C}ech nerve
$\check{C}(U|_{\{0\}\times I})\colon\Delta(I)^{op}\rightarrow\mathcal{C}_{/B}$ is the restriction of $U$ along
$\sigma$ by Lemma~\ref{lemmasymnerverkan}. Thus, as $\sigma$ is cofinal and $U$ is a semi-descent diagram
by assumption, the family $U|_{\{0\}\times I}$ is universally jointly effective epic by Lemma~\ref{lemmadesccofstable}.

We are left to show that $\check{\Sigma}(F)\colon\mathrm{FS}_+(I)^{op}\rightarrow\mathcal{C}_{/B}$ is higher covering 
whenever $F\colon I\rightarrow\mathcal{C}_{/B}$ is a family of arrows such that
$\check{C}(F)\colon\Delta(I)^{op}\rightarrow\mathcal{C}_{/B}$ is a semi-descent diagram. Therefore, we use 
that all objects in the discrete full subcategory $\{0\}\times I$ are small-injective in $\mathrm{FS}_+(I)^{op}$ 
with respect to non-empty finite products and pullbacks. That means, given any diagram 
$G\colon J\rightarrow\mathrm{FS}_+(I)^{op}$ for $J$ a finite non-empty set or the free co-span, let
$G_{/\iota_{\mathrm{FS}}}$ be the composite
\[J^{op}\xrightarrow{G^{op}}\mathrm{FS}_+(I)\xrightarrow{-_{/\iota_{\mathrm{FS}}}}\mathrm{Set}.\]
Then the natural map
\[\mathrm{colim}(G_{/\iota_{\mathrm{FS}}})\rightarrow(\mathrm{lim}G)_{/\iota_{\mathrm{FS}}}\]
is an equivalence (of sets). Via the formula (\ref{diagpropcohcovcotoptop2}), it follows that the right Kan extension 
$\check{\Sigma}(F)\colon\mathrm{FS}_+(I)^{op}\rightarrow\mathcal{C}_{/B}$ of any family
$F\colon I\rightarrow\mathcal{C}_{/B}$ of arrows preserves all such limits. Furthermore, $\check{\Sigma}(F)$ is a 
semi-descent diagram whenever $F$ universally jointly effective epic by cofinality of $\sigma$. Thus, $\check{\Sigma}(F)$ is a 
higher covering diagram whenever $F$ is universally jointly effective epic by Example~\ref{explecovprod}.
\end{proof}

\begin{theorem}\label{thmocdsheaves}
Suppose $\mathcal{C}$ is a small $\infty$-category with pullbacks and $\kappa$ is a regular cardinal. Then the family 
of sets
\[\mathrm{Coh}_{\kappa}(B):=\{U\colon\mathrm{FS}(I)_+^{op}\rightarrow\mathcal{C}_{/B}\mid I\in\mathrm{Set}_{\kappa}, U\text{ is a higher covering diagram}\}\] 
for $B\in\mathcal{C}$ is a small well-structured colimit pre-topology on $\mathcal{C}$. The according localization
$\hat{\mathcal{C}}\rightarrow\mathrm{Sh}_{\mathrm{Coh}_{\kappa}}(\mathcal{C})$ is generated by the $\kappa$-small 
universally jointly effective epic families in $\mathcal{C}$. In particular, the localization 
$\hat{\mathcal{C}}\rightarrow\mathrm{Sh}_{\mathrm{Coh}_{\kappa}}(\mathcal{C})$ is sub-canonical and topological.
\end{theorem}
\begin{proof}
The set $\mathrm{Coh}_{\kappa}$ consists of well-indexed semi-descent diagrams by construction.
Stability under base change follows directly from stability under base change of the class of higher covering 
diagrams (Theorem~\ref{prophcdmaxtop}). For reflexivity, we note that $\mathrm{FS}_+(I)^{op}$ is connected, so 
the colimit of the composition
\[\{1_{yB}\}\colon\mathrm{FS}_+(I)^{op}\rightarrow\Delta^0\xrightarrow{\{1_{B}\}}\mathcal{C}_{/C}\xrightarrow{y}\hat{\mathcal{C}}_{/yB}\]
is the object $1_{yB}$ for every object $B\in\mathcal{C}$. Thus, for all $B\in\mathcal{C}$, the diagrams
$\{1_B\}\colon\Delta^0\rightarrow\mathcal{C}_{/B}$ and
$\mathrm{FS}_+(I)^{op}\rightarrow\Delta^0\xrightarrow{1_{B}}\mathcal{C}_{/B}$ are cofinally equivalent. To show that 
$\mathrm{Coh}_{\kappa}$ is closed under pre-diagonals, it suffices to show that for all pairs
$([n],\vec{i}),([m],\vec{j})\in\mathrm{FS}_+(I)^{op}$, there is a finite product-preserving cofinal
functor
\[\mathrm{FS}_+(I)^{op}\rightarrow\mathrm{Fun}_{(([n],\vec{i}),([m],\vec{j}))}(D^1,\mathrm{FS}_+(I)^{op}).\]
Therefore we may simply use that $\mathrm{FS}_+(I)^{op}$ has all non-empty finite products. Indeed, for 
any $\infty$-category $J$ with non-empty finite products, and any two objects $i,j\in J$, the
$\infty$-category $\mathrm{Fun}_{(i,j)}(D^1,J)$ is equivalent to the slice $J_{/i\times j}$. The projection
$J_{/i\times j}\rightarrow J$ has a right adjoint
$J\rightarrow J_{/i\times j}$, and right adjoints preserve all limits and are cofinal. This finishes the proof of the 
fact that $\mathrm{Coh}_{\kappa}$ is a well-structured colimit topology on $\mathcal{C}$.

For the second statement, we show that the modulator $\mathrm{Cov}_{\mathrm{Coh}_{\kappa}}$ generates the sheaf 
theory for the $\kappa$-coherent Grothendieck topology.
Therefore, we note that the modulator $\mathrm{Cov}_{\mathrm{Coh}_{\kappa}}$ consists of monomorphisms, and in fact 
is the usual set of generating covering sieves for the $\kappa$-coherent Grothendieck topology on
$\mathcal{C}$. Indeed, given a universally jointly effective epic family $F$ over $B$, the colimit of the composition
$y\check{C}(F)=\check{C}(yF)\colon\mathrm{\Delta}(I)^{op}\rightarrow\hat{\mathcal{C}}_{/y(B)}$ can be computed by the 
colimit of its (global) left Kan extension along the cocartesian fibration
$p\colon\Delta(I)^{op}\twoheadrightarrow\Delta^{op}$. We thus compute that the colimit
$\mathrm{colim}(y\check{C}(F))\rightarrow yB$ is the colimit of the simplicial diagram
\begin{align}\label{explecohtopequ2}
\xymatrix{
\mathrm{Lan}_p(y\check{C}(F))_0\ar[r] & \mathrm{Lan}_p(y\check{C}(F))_1\ar@<.5ex>@/^/[l]\ar@<-.5ex>@/_/[l] \ar@/^/[r]\ar@/_/[r]& \mathrm{Lan}_p(y\check{C}(F))_2\ar[l]\ar@<1ex>@/^/[l]\ar@<-1ex>@/_/[l]\ar@{-->}@<.5ex>[r] & \dots\ar@{-->}@<.5ex>[l]
}
\end{align}
over $yB$. By \cite[Proposition 4.3.3.10]{luriehtt}, each $\mathrm{Lan}_p(y\check{C}(F))_n$ is the colimit of the 
restriction of $\check{C}(F)$ to the fiber $p^{-1}([n])=I^{|[n]|}$. I.e.,
$\mathrm{Lan}_p(y\check{C}(F))_n\simeq\coprod_{\vec{i}\in I^{|[n]|}}y\check{C}(F)([n],\vec{i})$. Using Condition
(\ref{explecohtopequ1}), we see that the simplicial object (\ref{explecohtopequ2}) is equivalent to the \v{C}ech 
nerve of $\mathrm{Lan}_p(y\check{C}(F))_0\simeq\coprod_{i\in I}y\check{C}(F)([0],i)$ over $yB$. Thus,
$\mathrm{colim}(y\check{C}(F))\simeq\left(\coprod_{i\in I}y F_i\right)_{-1}$ over $yB$, which is exactly the sieve 
generated by the $\kappa$-coherent cover $F=\{E_i\rightarrow B\mid i\in I\}$ via \cite[Lemma 6.2.3.18]{luriehtt}.

\end{proof}

The $\kappa$-coherent Grothendieck topology is most commonly considered on categories which themselves are 
$\kappa$-coherent. We therefore make the following definition.

\begin{definition}\label{defordgeocat}
An $\infty$-category $\mathcal{C}$ with pullbacks is \emph{locally $\kappa$-coherent} for some regular 
cardinal $\kappa$ if for every ($\kappa$-)small set $I$ and every family
$F=\{E_i\rightarrow B\mid i\in I\}$ of objects over some $B\in\mathcal{C}$ the \v{C}ech nerve
$\check{C}(F)\colon\Delta(I)^{op}\rightarrow\mathcal{C}_{/B}$ has a universal colimit $|\check{C}(F)|\rightarrow B$ 
such that the induced factorization $\check{C}(F)\colon\Delta(I)^{op}\rightarrow\mathcal{C}_{/|\check{C}(F)|}$ is the 
\v{C}ech nerve of the family $F=\{E_i\rightarrow |\check{C}(F)|\mid i\in I\}$. 
A locally $\kappa$-coherent $\infty$-category $\mathcal{C}$ is \emph{$\kappa$-coherent} if it has a terminal 
object. An $\infty$-category $\mathcal{C}$ is (locally) \emph{infinitary-coherent} if it is (locally)
$\kappa$-coherent for all regular cardinals $\kappa$.
\end{definition}

\begin{remark}\label{remdefordgeocat}
Definition~\ref{defordgeocat} is chosen so that the generalization of Definition~\ref{defregcat} is obvious. 
Similar to the single arrow case, the colimit $|\check{C}(F)|\rightarrow B$ for any family
$F=\{E_i\rightarrow B\mid i\in I\}$ is $(-1)$-truncated (whenever it exists).
Thereby one can show that an $\infty$-category $\mathcal{C}$ with pullbacks is locally ($\kappa$-)coherent if and 
only if it is locally regular and the subobject-posets $\mathrm{Sub}(B)$ for objects 
$B\in\mathcal{C}$ have pullback-stable ($\kappa$-)small unions. We will omit a proof and work directly with 
Definition~\ref{defordgeocat} in the following instead.
\end{remark}

In particular, all results for locally $\kappa$-coherent $\infty$-categories to be stated below apply to 
locally regular $\infty$-categories by considering $\kappa=2$.


\begin{theorem}\label{corcharordgeocat}
For an $\infty$-category $\mathcal{C}$ with pullbacks the following are equivalent.
\begin{enumerate}
\item The $\infty$-category $\mathcal{C}$ is locally $\kappa$-coherent.
\item For every $\kappa$-small set $I$ and every family $F\colon I\rightarrow\mathcal{C}_{/B}$ of arrows there is a (unique) factorization
\[\Delta(I)^{op}\xrightarrow{\check{C}(F)}\mathcal{C}_{/B}\xrightarrow{\Sigma_f}\mathcal{C}_{/C}\]
such that $\check{C}(F)\colon\Delta(I)^{op}\rightarrow\mathcal{C}_{/B}$ has descent with respect to the class of \v{C}ech nerves of $\kappa$-small families (Definition~\ref{defdescentclass}).
\item For every $\kappa$-small set $I$ and every non-empty finite product-preserving functor
$U\colon\mathrm{FS}_+(I)^{op}\rightarrow\mathcal{C}_{/C}$ there is a (unique) factorization
\[\mathrm{FS}_+(I)^{op}\xrightarrow{U}\mathcal{C}_{/B}\xrightarrow{\Sigma_f}\mathcal{C}_{/C}\]
such that $U\colon\mathrm{FS}_+(I)^{op}\rightarrow\mathcal{C}_{/B}$ is a higher covering diagram.
\end{enumerate}
\end{theorem}
\begin{proof}
Regarding the equivalence of 1 and 3, the only non-trivial step left to show is in the ``if'' direction. Namely, that 
under the given assumption, for every $\kappa$-small family $F=\{E_i\rightarrow C\mid i\in I\}$, the factorization of
$\check{\Sigma}(F)\colon\mathrm{FS}_+(I)^{op}\rightarrow\mathcal{C}_{/C}$ through a higher covering diagram
$\check{\Sigma}(F)\colon\mathrm{FS}_+(I)^{op}\rightarrow\mathcal{C}_{/B}$ exhibits the restriction
$\check{C}(F)\colon\Delta(I)^{op}\rightarrow\mathcal{C}_{/B}$ as the \v{C}ech nerve of $F=\{E_i\rightarrow B\mid i\in I\}$. This however follows directly from Lemma~\ref{lemmamultinerve} as
$\check{\Sigma}(F)\colon\mathrm{FS}_+(I)^{op}\rightarrow\mathcal{C}_{/B}$ preserves non-empty finite products.

The fact that 3 implies 2 follows from Theorem~\ref{thmocdsheaves}, Corollary~\ref{cordescentdiaganel}, 
and the fact that every \v{C}hech-nerve can be functorially and cofinally right Kan extended to a symmetric
\v{C}hech-nerve by Lemma~\ref{lemmamultinerve}. The right Kan extension preserves cartesianness of natural 
transformations, given that the squares induced by permutations of non-empty finite sets are automatically cartesian.

Let's show that 2 implies 3. Let $\check{C}(\mathcal{C})$ be the class of \v{C}ech nerves in $\mathcal{C}$. That is, 
the class of diagrams $\check{C}(F)\colon\Delta(I)^{op}\rightarrow\mathcal{C}_{/D}$ for $D\in\mathcal{C}$, $I$ a
$\kappa$-small set, and $F=\{E_i\rightarrow D\mid i\in I\}$ a family of objects.
Let $F\colon I\rightarrow\mathcal{C}_{/C}$ be a $\kappa$-small family of 
objects. By assumption, its \v{C}ech nerve factors through a semi-descent diagram
$\check{C}(F)\colon\Delta(I)^{op}\rightarrow\mathcal{C}_{/B}$ such that 
\[\mathrm{res}_{\check{C}(F)}\colon\mathcal{C}_{/B}\rightarrow\mathrm{Desc}_{\check{C}(\mathcal{C})}(\check{C}(F))\]
is an equivalence. Via Lemma~\ref{lemmadesccofstable} and Proposition~\ref{lemmaocdran}, one shows that the associated 
symmetric \v{C}ech nerve
$\check{\Sigma}(F)\colon\mathrm{FS}_+(I)^{op}\rightarrow\mathcal{C}_{/C}$ induces an equivalence
\begin{align}\label{diagcharordgeocat}
\mathrm{res}_{\check{\Sigma}(F)}\colon\mathcal{C}_{/B}\rightarrow\mathrm{Desc}_{\check{\Sigma}(\mathcal{C})}(\check{\Sigma}(F)),
\end{align}
where $\check{\Sigma}(\mathcal{C})$ is the class of symmetric \v{C}ech nerves in $\mathcal{C}$.
We are to show that the colimiting factorization
$\check{\Sigma}(F)\colon\mathrm{FS}_+(I)^{op}\rightarrow\mathcal{C}_{/B}$ is a higher covering diagram. Therefore, we 
first note that $\check{\Sigma}(F)\colon\mathrm{FS}_+(I)^{op}\rightarrow\mathcal{C}_{/B}$ is well-indexed. Indeed,
the original diagram $\check{\Sigma}(F)\colon\mathrm{FS}_+(I)^{op}\rightarrow\mathcal{C}_{/C}$ preserves both
non-empty finite products and pullbacks, and $\mathcal{C}_{/B}\rightarrow\mathcal{C}_{/C}$ reflects connected limits; 
it follows that $\check{\Sigma}(F)\colon\mathrm{FS}_+(I)^{op}\rightarrow\mathcal{C}_{/B}$ preserves pullbacks. 
Thus, to show that it is higher covering it suffices to show that the digram also preserves non-empty finite products 
(Example~\ref{explecovprod}). Therefore, let $([n],\vec{i})\in\mathrm{FS}_+(I)^{op}$ be an object. We show that the 
canonical natural transformation
\[\varepsilon\colon\check{\Sigma}(F)(([n],\vec{i})\times -)\rightarrow\check{\Sigma}(F)([n],\vec{i})\times \check{\Sigma}(F)(-)\]
in $\mathrm{Fun}(\mathrm{FS}_+(I)^{op},\mathcal{C}_{/B})$ is an equivalence. Therefore, we note that the product 
projections induce a triangle
\[\xymatrix{
\check{\Sigma}(F)(([n],\vec{i})\times -)\ar[rr]^{\varepsilon}\ar@/_1pc/[dr]_{F\pi_2} & & \check{\Sigma}(F)([n],\vec{i})\times \check{\Sigma}(F)(-)\ar@/^1pc/[dl]^{\pi_2} \\
 & \check{\Sigma}(F)(-) & 
}\]
of cartesian natural transformations in $\mathrm{Fun}(\mathrm{FS}_+(I)^{op},\mathcal{C}_{/B})$. The colimit of the 
domain
$\check{\Sigma}(F)(([n],\vec{i})\times -)$ is the colimit of the composition
\[\mathrm{FS}_+(I)^{op}\xrightarrow{([n],\vec{i})\times -}\mathrm{FS}_+(I)^{op}_{/([n],\vec{i})}\xrightarrow{\check{\Sigma}(F)_{/([n],\vec{i})}}\mathcal{C}_{/\check{\Sigma}(F)([n],\vec{i})}\rightarrow\mathcal{C}_{/B}.\]
The product functor $([n],\vec{i})\times -$ is a right adjoint and hence cofinal. The slice $\mathrm{FS}_+(I)^{op}_{/([n],\vec{i})}$ has a terminal object. It follows that the colimit of this composition is exactly
$1_{\check{\Sigma}(F)([n],\vec{i})}$ in $\mathcal{C}_{/\check{\Sigma}(F)([n],\vec{i})}$, or
$\check{\Sigma}(F)([n],\vec{i})$ in $\mathcal{C}_{/B}$ equivalently.

The colimit of the codomain $\check{\Sigma}(F)([n],\vec{i})\times \check{\Sigma}(F)(-)$ of $\varepsilon$ is 
also $\check{\Sigma}(F)([n],\vec{i})$, because $\check{\Sigma}(F)$ is a semi-descent diagram. As $\varepsilon$ 
itself is factors through a natural transformation in
$\mathrm{Fun}(\mathrm{FS}_+(I)^{op},\mathcal{C}_{/\check{\Sigma}(F)([n],\vec{i})})$, the colimit of $\varepsilon$ 
is the identity on $\check{\Sigma}(F)([n],\vec{i})$. Thus, $\varepsilon$ is a natural equivalence itself by 
the fact that the functor (\ref{diagcharordgeocat}) is an equivalence, presuming that both domain and codomain of
$\varepsilon$ are \v{C}ech nerves themselves. However, on the one hand, the domain factors through the composition
\[\mathrm{FS}_+(I)^{op}\xrightarrow{([n],\vec{i})\times -}\mathrm{FS}_+(I)^{op}_{/([n],\vec{i})}\xrightarrow{\check{\Sigma}(F)_{/([n],\vec{i})}}\mathcal{C}_{/\check{\Sigma}(F)([n],\vec{i})}\]
as noted above. This composition preserves non-empty finite products and hence is a symmetric \v{C}ech nerve by 
Lemma~\ref{lemmamultinerve}. On the other hand, the codomain factors through the composition
\[\mathrm{FS}_+(I)^{op}\xrightarrow{\check{\Sigma}(F)}\mathcal{C}_{/B}\xrightarrow{\check{\Sigma}(F)([n],\vec{i})\times -}\mathcal{C}_{/\check{\Sigma}(F)([n],\vec{i})}\]
which preserves non-empty finite products, too, and hence is a \v{C}ech nerve as well.
\end{proof}

\begin{remark}\label{remocdlvthy}
By Lemma~\ref{lemmamultinerve} and Proposition~\ref{lemmaocdran}, a jointly effective epic family $F$ over an object $B$ in 
a locally $\kappa$-coherent $\infty$-category $\mathcal{C}$ is essentially the same structure as a non-empty finite 
product preserving diagram $\mathrm{FS}_+(I)^{op}\rightarrow\mathcal{C}_{/B}$. Every such diagram can be 
extended uniquely to a finite product preserving functor from the $I$-sorted Lawvere theory $\mathrm{FS}(I)^{op}$ of
$I$-indexed collections of objects simply by mapping the terminal object $\emptyset\in\mathrm{FS}(I)^{op}$ to the 
terminal object $1_B\in\mathcal{C}_{/B}$. The latter is exactly a $\mathrm{FS}(I)^{op}$-algebra in
$\mathcal{C}_{/B}$. Thus, according to Theorem~\ref{corcharordgeocat}, an $\infty$-category $\mathcal{C}$ with 
pullbacks is $\kappa$-coherent if and only if for all $I\in\mathrm{Set}_{\kappa}$, for all $B\in\mathcal{C}$, and for 
all $\mathrm{FS}(I)^{op}$-algebras $T$ in $\mathcal{C}_{/B}$, the colimit of the restriction
$T_+\colon\mathrm{FS}_+(I)^{op}\rightarrow\mathcal{C}_{/B}$ exists, and the canonical extension
$T\colon\mathrm{FS}(I)^{op}\rightarrow\mathcal{C}_{/\mathrm{colim}T_+}$ is again an $\mathrm{FS}(I)^{op}$-algebra.
\end{remark}

\begin{corollary}
Suppose $\mathcal{C}$ is a small locally $\kappa$-coherent $\infty$-category. Then the localization
$\mathrm{Sh}_{\mathrm{Coh}_{\kappa}}(\mathcal{C})$ is the $\infty$-topos of $\kappa$-coherent sheaves 
associated to the $\kappa$-coherent Grothendieck topology on $\mathcal{C}$.
\end{corollary}
\begin{proof}
Immediate by Theorem~\ref{thmocdsheaves}.
\end{proof}

In contrast to Corollary~\ref{corextpoints} and Corollary~\ref{corexthyper} in the extensive case, we have the 
following proposition. Therefore, if $\mathcal{C}$ is a $\kappa$-coherent $\infty$-category, we note 
that the points of $\mathrm{Sh}_{\mathrm{Coh}_{\kappa}}(\mathcal{C})$ are (up to 
equivalence) exactly the left exact functors $\mathcal{C}\rightarrow\mathcal{S}$ which preserve jointly effective 
epic families of size less than $\kappa$.

\begin{proposition}\label{propregpts}
Let $\mathcal{C}$ be a small locally $\kappa$-coherent $\infty$-category for some regular cardinal
$\kappa\geq 2$. Every non-trivial $\infty$-connected map $f$ in $\mathcal{C}$ induces a non-trivial
$\infty$-connected map $yf$ in $\mathrm{Sh}_{\mathrm{Coh}_{\kappa}}(\mathcal{C})$. In particular, the
$\infty$-topos of $\kappa$-coherent sheaves on $\mathcal{C}$ is generally not hypercomplete, and hence 
does generally not have enough points.
\end{proposition}

\begin{proof}
We note that the Yoneda embedding $y\colon\mathcal{C}\rightarrow\mathrm{Sh}_{\mathrm{Coh}_{\kappa}}(\mathcal{C})$ 
preserves both pullbacks and $\kappa$-small jointly effective epic families. In particular, it preserves effective 
epimorphisms. Indeed, for an effective epimorphism $f\colon E\rightarrow B$ in
$\mathcal{C}$, the sequence $yE\rightarrow|\check{C}(yf)|\rightarrow yB$ factors $yf$ in $\hat{\mathcal{C}}$ into an 
effective epimorphism followed by a $\mathrm{Cov}_{\mathrm{Coh}_{\kappa}}$-local monomorphism. Since the 
localization $\hat{\mathcal{C}}\rightarrow\mathrm{Sh}_{\mathrm{Coh}_{\kappa}}(\mathcal{C})$ preserves pullbacks and 
colimits, it preserves effective epimorphisms, and so the map $yf$ is equivalent to an effective epimorphism in
$\mathrm{Sh}_{\mathrm{Coh}_{\kappa}}(\mathcal{C})$. In particular, it preserves $\infty$-connected maps. 
Furthermore, the localization $\mathrm{Sh}_{\mathrm{Coh}_{\kappa}}(\mathcal{C})$ is sub-canonical by 
Theorem~\ref{thmocdsheaves}. Thus, whenever $\mathcal{C}$ exhibits a non-hypercomplete object $E$, the 
representable $yE$ is non-hypercomplete in $\mathrm{Sh}_{\mathrm{Coh}_{\kappa}}(\mathcal{C})$. Such a
$\kappa$-coherent $\infty$-category $\mathcal{C}$ is given for instance by the $\infty$-category of
$\lambda$-compact object in the Dugger-Hollander-Isaksen $\infty$-topos \cite[Section 11.3]{rezkhtytps} for any 
regular cardinal $\lambda\geq\kappa$ large enough. As hypercompleteness is a necessary condition for an
$\infty$-topos to have enough points \cite[Remark 6.5.4.7]{luriehtt}, the second statement follows.
\end{proof}

\section{Higher geometric sheaves}\label{seccohtop}

We have seen in Theorem~\ref{prophcdmaxtop} that the class of higher covering diagrams is the largest
well-structured colimit pre-topology on any $\infty$-category $\mathcal{C}$ with pullbacks. In this section we study 
the basic properties of the ``higher $\kappa$-geometric ''sheaf theory associated to the class
$\mathrm{Geo}_{\kappa}$ of $\kappa$-small higher covering diagrams in suitable $\infty$-categories $\mathcal{C}$. We 
show that it is generally neither topological nor hypercomplete. Instead, its topological part is given by the
$\infty$-topos of $\kappa$-coherent sheaves (whenever $\mathcal{C}$ is locally $\kappa$-coherent and $\kappa$ is 
uncountable). When $\mathcal{C}$ is an $\infty$-topos, we show that it recovers 
Lurie's notion of sheaves on an $\infty$-topos \cite[Notation 6.3.5.16]{luriehtt}. We will show however that there 
are $\infty$-toposes $\mathcal{C}$ which admit infinitary-coherent sheaves over themselves which are not higher 
geometric. This in particular shows that the infinitary-coherent sheaf theory on an $\infty$-topos is generally not 
canonical. \\

\begin{definition}
Let $\mathcal{C}$ be an $\infty$-category with pullbacks. We refer to the $\infty$-category
$\mathrm{Sh}_{\mathrm{Geo}}(\mathcal{C})\subseteq\hat{\mathcal{C}}$ of
$\mathrm{Cov}_{\mathrm{Geo}(\mathcal{C})}$-local presheaves as the higher geometric sheaf theory of $\mathcal{C}$.
Its objects will be referred to as higher geometric sheaves on $\mathcal{C}$.
\end{definition}

Even if $\mathcal{C}$ is a small $\infty$-category, the structured colimit pre-topology
$\mathrm{Geo}(\mathcal{C})$ may still be large, and so the $\infty$-category
$\mathrm{Sh}_{\mathrm{Geo}}(\mathcal{C})\subseteq\hat{\mathcal{C}}$ may not arise as a reflective localization. Yet, 
the set $\mathrm{Geo}_{\kappa}(\mathcal{C})$ of higher covering diagrams with $\kappa$-small domain is a small 
well-structured colimit pre-topology for any given infinite regular cardinal $\kappa$ (infinity and 
regularity assure pre-diagonal closure). Via Theorem~\ref{thmcolimtop}, we obtain left exact accessible localizations
\[\hat{\mathcal{C}}\rightarrow\mathrm{Sh}_{\mathrm{Geo}_{\kappa}}(\mathcal{C}).\]
For every pair $\kappa_1\leq \kappa_2$ of infinite regular cardinals, there are canonical inclusions
$\mathrm{Sh}_{\mathrm{Geo}_{\kappa_2}}(\mathcal{C})\subseteq\mathrm{Sh}_{\mathrm{Geo}_{\kappa_1}}(\mathcal{C})$. Thus, for every cofinal sequence of infinite regular cardinals $\{\kappa_i\mid i\in\mathrm{Ord}\}$ we have
\[\mathrm{Sh}_{\mathrm{Geo}}(\mathcal{C})=\bigcap_{i\in\mathrm{Ord}}\mathrm{Sh}_{\mathrm{Geo}^{\kappa_i}}(\mathcal{C}).\]

\begin{remark}\label{remjointcanonical}
Whenever $\mathcal{C}$ is a small $\infty$-category with pullbacks and $\hat{\mathcal{C}}\rightarrow\mathcal{E}$ is a 
sub-canonical left exact accessible localization, we may thus reformulate canonicity of
$\mathrm{Geo}(\mathcal{C})$ as stated in Proposition~\ref{corhcdiscanonical} as follows. 
Whenever $\mathcal{C}$ is a small $\infty$-category with pullbacks and $\hat{\mathcal{C}}\rightarrow\mathcal{E}$ is a 
sub-canonical left exact accessible localization, then the small well-structured colimit pre-topology 
$T_{\mathcal{E}}$ on $\mathcal{C}$ is contained in the class $\mathrm{Geo}_{\kappa}(\mathcal{C})$ of higher covering 
diagrams with $\kappa$-small domain for some large enough cardinal $\kappa$. It follows that
$\mathrm{Sh}_{\mathrm{Geo}_{\kappa}}(\mathcal{C})\subseteq\mathcal{E}$, and so for any cofinal sequence of cardinals 
$\kappa$ the localizations $\hat{\mathcal{C}}\rightarrow\mathrm{Sh}_{\mathrm{Geo}_{\kappa}}(\mathcal{C})$ are 
``jointly'' canonical.
\end{remark}

\begin{proposition}\label{propcohcotoptopfac}
Let $\kappa$ be an uncountable regular cardinal and let $\mathcal{C}$ be a small locally $\kappa$-coherent
$\infty$-category. Then there is a sequence
\[\hat{\mathcal{C}}\rightarrow\mathrm{Sh}_{\mathrm{Coh}_{\kappa}}(\mathcal{C})\rightarrow\mathrm{Sh}_{\mathrm{Geo}_{\kappa}}(\mathcal{C})\]
of left exact accessible localizations where the first localization is topological and the second localization is 
cotopological.
\end{proposition}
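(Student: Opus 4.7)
The plan is to apply Corollary~\ref{cormodbase} to the Id-modulator $\mathrm{Cov}_{\text{hcd}_{\kappa}}$ and to identify the resulting intermediate Grothendieck topology with the classical $\kappa$-geometric topology $J_{\text{geo}_{\kappa}}$. By Theorem~\ref{thmcohtopex}, $\mathrm{Cov}_{\text{hcd}_{\kappa}}$ is an Id-modulator, hence a $\Delta$-modulator via Lemma~\ref{lemmaidmod}.2. Consequently the localization $\hat{\mathcal{C}}\to\mathrm{Sh}_{\text{hcd}_{\kappa}}(\mathcal{C})$ is accessible and left exact, and Corollary~\ref{cormodbase} factors it uniquely as a topological localization $\hat{\mathcal{C}}\to\mathrm{Sh}_{J}(\mathcal{C})$ followed by a cotopological localization $\mathrm{Sh}_{J}(\mathcal{C})\to\mathrm{Sh}_{\text{hcd}_{\kappa}}(\mathcal{C})$, where $J$ is generated by the $(-1)$-truncations of maps in $\mathrm{Cov}_{\text{hcd}_{\kappa}}$. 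It thus remains to identify $\mathrm{Sh}_{J}(\mathcal{C})$ with $\mathrm{Sh}_{\text{ocd}_{\kappa}}(\mathcal{C})$, which by Example~\ref{explecohtop} reduces to the equality $J=J_{\text{geo}_{\kappa}}$ of Grothendieck topologies on $\mathcal{C}$.

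For the inclusion $J\subseteq J_{\text{geo}_{\kappa}}$, I would use that for every higher covering diagram $F\colon I\to\mathcal{C}$ the natural map $\coprod_{i\in\mathrm{Ob}(I)}yF(i)\to\mathrm{colim}_{I}yF$ is an effective epimorphism in $\hat{\mathcal{C}}$, because any colimit in a presheaf $\infty$-topos is pointwise covered by the family of its vertex-components. Hence the $(-1)$-truncation of $\mathrm{colim}_{I}yF\to y(\mathrm{colim}_{I}F)$ agrees with that of $\coprod_{i}yF(i)\to y(\mathrm{colim}_{I}F)$, and the latter is a generating covering sieve for the $\kappa$-small jointly epimorphic family $(F(i)\to\mathrm{colim}_{I}F)_{i\in\mathrm{Ob}(I)}$. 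Conversely, for $J_{\text{geo}_{\kappa}}\subseteq J$, I would show that the ordinarily covering diagrams from Example~\ref{explecohtop} are themselves higher covering in the sense of Definition~\ref{defcovfunctors}: their indexing $\sum_{[n]\in\Delta^{op}}\kappa^{|[n]|}$ admits binary products given by concatenation $([n],\vec{i})\times([m],\vec{j})=([n{+}m{+}1],\vec{i}\ast\vec{j})$, and condition~(\ref{explecohtopequ1}) ensures that the ocd diagram sends these to the corresponding products in $\mathcal{C}_{/\mathrm{colim}F}$. Example~\ref{remcovexples}.1 then certifies these diagrams as higher covering, so their $(-1)$-truncations, which by Example~\ref{explecohtop} generate $J_{\text{geo}_{\kappa}}$, lie in $J$.

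The main obstacle I anticipate is the bookkeeping in the reverse inclusion: one must verify that the indexing category $\sum_{[n]\in\Delta^{op}}\kappa^{|[n]|}$ indeed carries the pullbacks needed for Example~\ref{remcovexples}.1 to apply, and that the ocd diagram preserves them. The core calculation is already implicit in Example~\ref{explecohtop}, but recasting it within the framework of Definition~\ref{defcovfunctors} may require careful combinatorial checks against the simplicial structure underlying $\kappa^{|\cdot|}$. Once this verification is in hand, the proposition follows immediately from Corollary~\ref{cormodbase} together with the identification $\mathrm{Sh}_{J}(\mathcal{C})=\mathrm{Sh}_{\text{ocd}_{\kappa}}(\mathcal{C})$.
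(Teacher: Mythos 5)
Your skeleton is the paper's: by Theorem~\ref{thmcohtopex} and Lemma~\ref{lemmaidmod}, $\mathrm{Cov}_{\text{hcd}_{\kappa}}$ is an Id-modulator, Corollary~\ref{cormodbase} gives the topological--cotopological factorization through the Grothendieck topology $J$ generated by the $(-1)$-truncations, and everything reduces to the two inclusions $(\mathrm{Cov}_{\text{hcd}_{\kappa}})_{-1}\subseteq\mathrm{Cov}_{\text{ocd}_{\kappa}}$ and $\mathrm{Cov}_{\text{ocd}_{\kappa}}\subseteq\mathrm{Cov}_{\text{hcd}_{\kappa}}$. Your argument for the first inclusion (the components $yF(i)$ jointly cover $\mathrm{colim}\,yF$ by an effective epimorphism, so the $(-1)$-truncations of $\mathrm{colim}\,yF\rightarrow yC$ and $\coprod_i yF(i)\rightarrow yC$ agree) is exactly the paper's, which cites Lurie's Lemmas 6.2.3.13 and 6.2.3.18 for precisely this.

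The reverse inclusion, however, contains a genuine gap, and it is exactly the point where the paper has to do real work. The indexing category $\Delta(I)=\sum_{[n]\in\Delta^{op}}I^{|[n]|}$ does \emph{not} have binary products, and the concatenation $([n+m+1],\vec{i}\ast\vec{j})$ is not a product of $([n],\vec{i})$ and $([m],\vec{j})$: a morphism into $([n+m+1],\vec{i}\ast\vec{j})$ is an order-preserving map $[n]\ast[m]\rightarrow[k]$ in $\Delta$ compatible with the labels, and a pair of order-preserving maps $\alpha\colon[n]\rightarrow[k]$, $\beta\colon[m]\rightarrow[k]$ assembles into such a map only when $\alpha(n)\leq\beta(0)$ (already for $n=m=0$, $k=1$ the pair $(\{1\},\{0\})$ does not assemble). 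Equivalently, $\Delta$ has no finite coproducts, so $\Delta^{op}$ and its discrete Grothendieck constructions have neither finite products nor pullbacks. Consequently Definition~\ref{defcovfunctors} and Example~\ref{remcovexples}.1 simply do not apply to the ocd diagrams as indexed over $\Delta(I)$ -- the hypothesis that the domain have pullbacks already fails -- so the verification you defer as ``careful combinatorial checks'' cannot succeed. The paper circumvents this by extending $I^{|\cdot|}$ to a symmetric simplicial set over $\mathrm{FinSet}_+^{op}$, whose category of elements $\mathrm{FS}(I)$ does have finite products and pullbacks, right Kan extending the ocd diagram along the $0$-simplices to a diagram $F^+\colon\mathrm{FS}(I)\rightarrow\mathcal{C}_{/C}$ that is shown to preserve these limits (hence is higher covering by Example~\ref{remcovexples}.1), and using cofinality of $\Delta(I)\hookrightarrow\mathrm{FS}(I)$ to conclude that $F$ and $F^+$ generate the same object $\mathrm{colim}\,yF\rightarrow yC$. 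Some such re-indexing is indispensable for your proof to close.
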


\begin{proof}
We show that $(\mathrm{Cov}_{\mathrm{Geo}_{\kappa}})_{-1}\subseteq \mathrm{Cov}_{\mathrm{Coh}_{\kappa}}$ and
$\text{Cov}_{\mathrm{Coh}_{\kappa}}\subseteq \text{Cov}_{\mathrm{Geo}_{\kappa}}$. It then follows from
Lemma~\ref{propmodbase} that the Grothendieck topology generated by the set of $(-1)$-truncations
$(\text{Cov}_{\mathrm{Geo}_{\kappa}})_{-1}$ is exactly the $\kappa$-coherent Grothendieck topology on
$\mathcal{C}$, and so the statement follows from Corollary~\ref{cormodbase}.

To construct the inclusion
$(\mathrm{Cov}_{\mathrm{Geo}_{\kappa}})_{-1}\subseteq \mathrm{Cov}_{\mathrm{Coh}_{\kappa}}$, let
$U\colon I\rightarrow\mathcal{C}_{/B}$ be a $\kappa$-small higher covering diagram. Then the $(-1)$-truncation of
$\mathrm{colim}yU\rightarrow yB$ in $\hat{\mathcal{C}}$ is the sieve 
generated by the family $\{U_i\rightarrow B\mid i\in I\}$ by \cite[Lemma 6.2.3.13]{luriehtt} 
and \cite[Lemma 6.2.3.18]{luriehtt}. To show that this sieve is a $\kappa$-coherent covering sieve, we are 
to show that the family $\{U_i\rightarrow B\mid i\in I\}$ is jointly effective epic. This however follows from 
Proposition~\ref{lemmaocdran}.
The other direction also follows directly from Proposition~\ref{lemmaocdran}.
\end{proof}

\begin{remark}
In Proposition~\ref{propcohcotoptopfac} we assumed the cardinal $\kappa$ to be uncountable so that the cardinal 
occurring in both $\mathrm{Sh}_{\mathrm{Geo}_{\kappa}}(\mathcal{C})$ and
$\mathrm{Sh}_{\mathrm{Coh}_{\kappa}}(\mathcal{C})$ is the same. In the case $\kappa=\aleph_0$, the proof of 
Proposition~\ref{propcohcotoptopfac} only generates factorizations of the form
\[\xymatrix{
\hat{\mathcal{C}}\ar[r]\ar[dr] & \mathrm{Sh}_{(\mathrm{Geo}_{\aleph_0})_{-1}}(\mathcal{C})\ar[r]\ar[d] & \mathrm{Sh}_{\mathrm{Geo}_{\aleph_0}}(\mathcal{C})\ar[d]\\
 & \mathrm{Sh}_{\mathrm{Coh}_{\aleph_0}}(\mathcal{C})\ar[r] & \mathrm{Sh}_{\mathrm{Geo}_{\aleph_1}}(\mathcal{C}).
}\]
The increase in cardinality is caused by the fact that the higher covering diagram
$\check{\Sigma}(F)\colon\mathrm{FS}_+(I)^{op}\rightarrow\mathcal{C}_{/B}$ associated to a finite cover
$F=\{E_i\rightarrow B\mid i\in I\}$ in $\mathcal{C}$ has countably infinite domain $\mathrm{FS}_+(I)^{op}$. Although 
it still has finite ``width'', it invariably has countably infinite ``length''. In this sense, the finite case is 
somewhat singular.
\end{remark}

\begin{corollary}
Let $\kappa$ be an uncountable regular cardinal, and let $\mathcal{C}$ be a small $\kappa$-coherent
$\infty$-category. Then the $\infty$-toposes $\mathrm{Sh}_{\mathrm{Coh}_{\kappa}}(\mathcal{C})$ and
$\mathrm{Sh}_{\mathrm{Geo}_{\kappa}}(\mathcal{C})$ have the same class of points. By construction, these are the left 
exact functors $M\colon\mathcal{C}\rightarrow\mathcal{S}$ which preserve colimits of $\kappa$-small higher covering 
diagrams.
\end{corollary}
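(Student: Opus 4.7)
The plan is to reduce the equality of point-classes to the fact, established in Proposition~\ref{propcohcotoptopfac}, that the right-hand localization
\[
\mathrm{Sh}_{\text{ocd}_{\kappa}}(\mathcal{C})\longrightarrow\mathrm{Sh}_{\text{hcd}_{\kappa}}(\mathcal{C})
\]
is cotopological. Recall that for any $\infty$-topos $\mathcal{X}$, a cotopological localization $\mathcal{X}\rightarrow\mathcal{Y}$ induces an equivalence of hypercompletions $\mathcal{X}^{\wedge}\simeq\mathcal{Y}^{\wedge}$ (this is exactly the defining feature of cotopological localizations via \cite[Proposition 6.5.2.13]{luriehtt}). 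Since $\mathcal{S}$ is itself hypercomplete (as a presheaf $\infty$-topos on a point, by \cite[Remark 6.5.4.7]{luriehtt}), every geometric morphism $\mathcal{S}\rightarrow\mathcal{X}$ necessarily factors through the geometric inclusion $\mathcal{X}^{\wedge}\hookrightarrow\mathcal{X}$. Consequently, pullback along the geometric inclusion identifies $\mathrm{pt}(\mathcal{X})\simeq\mathrm{pt}(\mathcal{X}^{\wedge})$ for any $\infty$-topos $\mathcal{X}$.

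Combining these two observations applied to $\mathrm{Sh}_{\text{ocd}_{\kappa}}(\mathcal{C})$ and $\mathrm{Sh}_{\text{hcd}_{\kappa}}(\mathcal{C})$, I obtain
\[
\mathrm{pt}(\mathrm{Sh}_{\text{ocd}_{\kappa}}(\mathcal{C}))\simeq\mathrm{pt}(\mathrm{Sh}_{\text{ocd}_{\kappa}}(\mathcal{C})^{\wedge})\simeq\mathrm{pt}(\mathrm{Sh}_{\text{hcd}_{\kappa}}(\mathcal{C})^{\wedge})\simeq\mathrm{pt}(\mathrm{Sh}_{\text{hcd}_{\kappa}}(\mathcal{C})),
\]
which proves the first assertion. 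The identification of both point-classes with the same subspace of $\mathrm{Fun}(\mathcal{C},\mathcal{S})$ is immediate from this chain of equivalences, since the identifications in question are induced by restriction along the Yoneda embedding in both cases.

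It remains to give the explicit description of $\mathrm{pt}(\mathrm{Sh}_{\text{hcd}_{\kappa}}(\mathcal{C}))$. By definition, a point is a geometric morphism $\mathcal{S}\rightarrow\mathrm{Sh}_{\text{hcd}_{\kappa}}(\mathcal{C})$, whose inverse image part is a left exact colimit-preserving functor $\mathrm{Sh}_{\text{hcd}_{\kappa}}(\mathcal{C})\rightarrow\mathcal{S}$. Precomposing with the sheafified Yoneda embedding and appealing to the universal property of the presheaf $\infty$-category (\cite[Theorem 5.1.5.6]{luriehtt}), such geometric morphisms correspond, via left Kan extension, to those left exact functors $M\colon\mathcal{C}\rightarrow\mathcal{S}$ whose left Kan extension to $\hat{\mathcal{C}}$ sends the $\mathrm{Cov}_{\text{hcd}_{\kappa}}$-covers to equivalences. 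Since left Kan extension along $y$ preserves all colimits and the covers in question are precisely the comparison maps $\mathrm{colim}\,yF\rightarrow y(\mathrm{colim}\,F)$ for higher covering diagrams $F$, this condition unwinds exactly to the requirement that $M$ preserves colimits of $\kappa$-small higher covering diagrams, giving the stated characterization.

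The only mild subtlety is ensuring that this last characterization survives passage through the two cotopological steps; but this is automatic because the correspondence between points and functors $\mathcal{C}\rightarrow\mathcal{S}$ is formulated entirely in terms of the generating modulator, and the equality $\mathrm{pt}(\mathrm{Sh}_{\text{ocd}_{\kappa}}(\mathcal{C}))=\mathrm{pt}(\mathrm{Sh}_{\text{hcd}_{\kappa}}(\mathcal{C}))$ established above forces the two descriptions to agree. The main conceptual ingredient is therefore the stability of points under cotopological localization, and everything else is an unpacking of the Yoneda-extension formalism.
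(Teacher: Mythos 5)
Your proposal is correct and rests on exactly the same ingredients as the paper's own (one-line) proof: cotopologicality of the localization $\mathrm{Sh}_{\text{ocd}_{\kappa}}(\mathcal{C})\rightarrow\mathrm{Sh}_{\text{hcd}_{\kappa}}(\mathcal{C})$ from Proposition~\ref{propcohcotoptopfac}, the fact that inverse images of points preserve $\infty$-connected maps, and hypercompleteness of $\mathcal{S}$ --- you have merely repackaged these through the intermediate identification $\mathrm{pt}(\mathcal{X})\simeq\mathrm{pt}(\mathcal{X}^{\wedge})$ and the invariance of hypercompletions under cotopological localization. This is essentially the same approach, with the explicit description of points unwound via Kan extension along the Yoneda embedding just as the paper's ``by construction'' intends.
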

\begin{proof}
This follows immediately from the fact that the localization
$\mathrm{Sh}_{\mathrm{Coh}_{\kappa}}(\mathcal{C})\rightarrow\mathrm{Sh}_{\mathrm{Geo}_{\kappa}}(\mathcal{C})$ is 
cotopological, together with the general observations that (the left adjoint part of) points preserve
$\infty$-connected maps, and that $\mathcal{S}$ is hypercomplete.
\end{proof}



\begin{corollary}\label{corcovhyper}
Let $\mathcal{C}$ be a small locally $\kappa$-coherent $\infty$-category for some uncountable regular 
cardinal $\kappa$. Every non-trivial $\infty$-connected map $f$ in $\mathcal{C}$ induces a non-trivial $\infty$-
connected map $yf$ in $\mathrm{Sh}_{\mathrm{Geo}_{\kappa}}(\mathcal{C})$. In particular, the $\infty$-topos
of higher $\kappa$-geometric sheaves on $\mathcal{C}$ is generally not hypercomplete, and hence does generally 
not have enough points.
\end{corollary}
\begin{proof}
As the localization $\hat{\mathcal{C}}\rightarrow\mathrm{Sh}_{\mathrm{Geo}_{\kappa}}(\mathcal{C})$ is sub-canonical, 
a map $f\colon E\rightarrow B$ in $\mathcal{C}$ is an equivalence if and only if $yf$ is an equivalence in 
$\mathrm{Sh}_{\mathrm{Geo}_{\kappa}}(\mathcal{C})$. Furthermore, given an $\infty$-connected map
$f\colon E\rightarrow B$ in $\mathcal{C}$, the representable $yf\in\hat{\mathcal{C}}$ is again $\infty$-connected in 
$\mathrm{Sh}_{\mathrm{Coh}_{\kappa}}(\mathcal{C})$ by Proposition~\ref{propregpts}. As the localization
$\mathrm{Sh}_{\mathrm{Coh}_{\kappa}}(\mathcal{C})\rightarrow\mathrm{Sh}_{\mathrm{Geo}_{\kappa}}(\mathcal{C})$ 
preserves $\infty$-connected maps, the map $yf\in\hat{\mathcal{C}}$ is still $\infty$-connected in
$\mathrm{Sh}_{\mathrm{Geo}_{\kappa}}(\mathcal{C})$.
\end{proof}

\begin{corollary}
For any uncountable regular cardinal $\kappa$, any locally $\kappa$-coherent $\infty$-category
$\mathcal{C}$, and any finite integer $n\geq -2$, the inclusion
$\mathrm{Sh}_{\mathrm{Geo}_{\kappa}}(\mathcal{C})\subseteq\mathrm{Sh}_{\mathrm{Coh}_{\kappa}}(\mathcal{C})$ induces 
an equivalence
$\tau_n(\mathrm{Sh}_{\mathrm{Geo}_{\kappa}}(\mathcal{C}))\simeq\tau_n(\mathrm{Sh}_{\mathrm{Coh}_{\kappa}}(\mathcal{C}))$ between the Grothendieck $n$-toposes of $n$-truncated sheaves. Thus, 
an $n$-truncated presheaf $F\colon\mathcal{C}^{op}\rightarrow\tau_n(\mathcal{S})$ is $\kappa$-coherent if 
and only if it is higher $\kappa$-geometric.
\end{corollary}
\begin{proof}
Any cotopological localization of $\infty$-toposes induces an equivalence on the according $n$-toposes of
$n$-truncated objects. 
\end{proof}

In the following we will see that $\kappa$-coherent and higher $\kappa$-geometric sheaves of arbitrary homotopy type 
however generally differ. 
In fact, recall that every 1-topos is equivalent to the category of (set-valued) sheaves for the geometric site over 
itself \cite[Proposition C.2.2.7]{elephant}. That means, the geometric (set-valued) sheaves on a 1-topos
$\mathcal{C}$ are exactly the small limit preserving functors $\mathcal{C}^{op}\rightarrow\mathrm{Set}$.
Whenever $\mathcal{C}$ is an $\infty$-topos, the latter notion is captured by \cite[Notation 6.3.5.16]{luriehtt} 
which defines the $\infty$-category $\mathrm{Sh}_{\mathcal{D}}(\mathcal{C})$ of small limit-preserving functors
$\mathcal{C}^{op}\rightarrow\mathcal{D}$ for an $\infty$-category $\mathcal{D}$. Lurie refers to such functors as
\emph{$\mathcal{D}$-valued sheaves on the $\infty$-topos $\mathcal{C}$}. We recover this sheaf condition over
$\infty$-toposes as follows.

\begin{proposition}\label{prophcdtoplim}
Let $\mathcal{E}$ be an $\infty$-topos and $\mathcal{D}$ be an $\infty$-category which admits all small limits. Then 
a functor $\mathcal{E}^{op}\rightarrow\mathcal{D}$ preserves all small limits if and only if it takes colimits of 
small higher covering diagrams in $\mathcal{E}$ to limits in $\mathcal{D}$.
\end{proposition}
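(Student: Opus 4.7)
The forward direction is immediate: if $F\colon\mathcal{E}^{op}\to\mathcal{D}$ preserves all small limits, then in particular it sends every small colimit in $\mathcal{E}$ (viewed as a limit in $\mathcal{E}^{op}$) to a limit in $\mathcal{D}$, so the higher covering condition is automatic.

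For the backward direction, I would first reduce to $\mathcal{D}=\mathcal{S}$. For each $d\in\mathcal{D}$, the functor $\mathcal{D}(d,\blank)$ preserves small limits, so $\mathcal{D}(d,F(\blank))\colon\mathcal{E}^{op}\to\mathcal{S}$ still takes higher covering colimits to limits. Since the family $\{\mathcal{D}(d,\blank)\}_{d\in\mathcal{D}}$ jointly reflects small limits, once the $\mathcal{S}$-valued case is established, $F$ itself preserves small limits. In the $\mathcal{S}$-valued case, the fact that $\mathcal{E}$ is a $\kappa$-logos for every regular cardinal $\kappa$ lets me invoke Lemma~\ref{lemmacovfunctorsdescent}: the higher covering diagrams in $\mathcal{E}$ are precisely the pullback-preserving functors from small $\infty$-categories with pullbacks.

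Given an arbitrary small $H\colon K\to\mathcal{E}$ with colimit $X$, I would lift $H$ through the colimit cocone to $\tilde H\colon K\to\mathcal{E}_{/X}$, whose colimit in the slice is $\mathrm{id}_X$. Let $I\subseteq\mathcal{E}_{/X}$ be a small full subcategory containing the essential image of $\tilde H$ and closed under pullbacks in $\mathcal{E}_{/X}$. The slice projection $\pi_X\colon\mathcal{E}_{/X}\to\mathcal{E}$ is a left adjoint (to $X\times\blank$), hence preserves colimits, and since pullbacks in the slice are computed in $\mathcal{E}$ it preserves pullbacks. So the composite
\[
G\colon I\hookrightarrow\mathcal{E}_{/X}\xrightarrow{\pi_X}\mathcal{E}
\]
is a pullback-preserving diagram from a small $\infty$-category with pullbacks, that is, a higher covering diagram with $\mathrm{colim}\,G\simeq X$. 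The hypothesis on $F$ then yields $F(X)\simeq\lim_{I^{op}}F\circ G^{op}$.

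The remaining step, which I expect to be the main obstacle, is to compare this limit with $\lim_{K^{op}}F\circ H^{op}$ along the embedding $\tilde H^{op}\colon K^{op}\hookrightarrow I^{op}$. The plan is to verify that $F\circ G^{op}$ is the pointwise right Kan extension of $F\circ H^{op}$ along $\tilde H^{op}$. Each object of $I$ is by construction a finite iterated pullback $\tilde H(k_1)\times_X\cdots\times_X\tilde H(k_n)$ in $\mathcal{E}_{/X}$, and $\pi_X$ carries this to the corresponding iterated pullback in $\mathcal{E}$. Applying the higher covering hypothesis to the span diagrams of Corollary~\ref{corcolimittop} associated to pairs $(k_i,k_j)$ should force $F$ to send these pullbacks to the iterated pullback of the $F(H(k_i))$ over $F(X)$, matching the right Kan extension formula; an induction on the depth of the iterated pullback then closes the argument.
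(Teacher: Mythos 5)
Your forward direction and the reduction to $\mathcal{D}=\mathcal{S}$ are fine (the latter is unnecessary but harmless), and invoking Lemma~\ref{lemmacovfunctorsdescent} to manufacture higher covering diagrams is the right instinct. However, your argument has two genuine gaps, and they sit exactly at the points you gloss over. First, the claim that $\mathrm{colim}\,G\simeq X$ for the enlarged diagram $G\colon I\to\mathcal{E}$ is asserted, not proved: enlarging a diagram inside $\mathcal{E}_{/X}$ can in general change its colimit, since $\tilde H\colon K\to I$ need not be cofinal. The statement is in fact true here, but only because $I$ is closed under pullbacks over $X$ --- one has to argue that $q\colon\mathrm{colim}_I G\to X$ admits a section (restrict the colimit cocone along $\tilde H$) \emph{and} is a monomorphism (compute $\mathrm{colim}_I G\times_X\mathrm{colim}_I G$ by universality as $\mathrm{colim}_{I\times I}G(i)\times_X G(j)$ and use that the product functor $I\times I\to I$ is a right adjoint, hence cofinal). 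None of this is in your write-up. Second, and more seriously, the comparison $\lim_{I^{op}}F\circ G^{op}\simeq\lim_{K^{op}}F\circ H^{op}$ is the actual content of the proposition, and your plan to verify that $F\circ G^{op}$ is the pointwise right Kan extension of $F\circ H^{op}$ along $\tilde H^{op}$ does not go through as sketched: since $I$ is a \emph{full} subcategory of $\mathcal{E}_{/X}$, the relevant comma categories $(\tilde H\downarrow i)$ contain all maps over $X$ into an iterated pullback, not just the formal projections, so matching the Kan extension formula against the finite iterated pullback requires a cofinality argument you do not supply; moreover the identification $F(H(k_1)\times_X H(k_2))\simeq F(H(k_1))\times_{F(X)}F(H(k_2))$ presupposes control over $F(X)$, which is the object being computed, so the proposed induction is circular as stated.

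For comparison, the paper avoids both difficulties by a different decomposition. It first treats $\mathcal{E}\simeq\hat{\mathcal{C}}$ for $\mathcal{C}$ small with pullbacks: there the single canonical diagram $\mathcal{C}_{/X}\twoheadrightarrow\mathcal{C}\xrightarrow{y}\hat{\mathcal{C}}$ is higher covering by Lemma~\ref{lemmacovfunctorsdescent} and has colimit $X$ by the density theorem, so the hypothesis says precisely that $F$ is the pointwise right Kan extension of its restriction along Yoneda, whence $F$ preserves all small limits by \cite[Lemma 5.1.5.5]{luriehtt}. A general $\infty$-topos is then handled by choosing a left exact accessible localization $L\colon\hat{\mathcal{C}}\to\mathcal{E}$ and observing that $L$ carries higher covering diagrams to higher covering diagrams, so $F\circ L$ falls under the first case; fully faithfulness of the right adjoint finishes the argument. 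If you want to salvage your direct approach, you would essentially have to reprove these two inputs by hand; the route through presheaf toposes is where the Kan extension bookkeeping becomes standard.
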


\begin{proof}
One direction is trivial. We show the other direction in two steps. First, let $\mathcal{C}$ be a small $\infty$-
category with pullbacks, and suppose $\mathcal{E}\simeq\hat{\mathcal{C}}$. For every $X\in\hat{\mathcal{C}}$, the 
canonical inclusion
\[\mathcal{C}_{/X}\xrightarrow{y}\hat{\mathcal{C}}_{/X}\]
is well-indexed and colimiting. It is a higher covering diagram by Example~\ref{explecovgen} (or alternatively by
Corollary~\ref{corhcdlogos} as $\hat{\mathcal{C}}$ has descent). Thus, whenever $F\colon\hat{\mathcal{C}}^{op}\rightarrow\mathcal{D}$ 
takes colimits of small higher covering diagrams in $\hat{\mathcal{C}}$ to 
limits in $\mathcal{D}$, it follows that $F$ is the pointwise right Kan extension of its restriction along
$y\colon\mathcal{C}^{op}\rightarrow\hat{\mathcal{C}}^{op}$. By \cite[Lemma 5.1.5.5]{luriehtt} it follows that
$F$ preserves all small limits.

Second, suppose $\mathcal{E}$ is a general $\infty$-topos. By \cite[Proposition 6.1.5.3]{luriehtt} there is a small $\infty$-category  
$\mathcal{C}$ with pullbacks together with a left exact accessible localization functor
$L\colon\hat{\mathcal{C}}\rightarrow\mathcal{E}$.
Suppose $F\colon\mathcal{E}^{op}\rightarrow\mathcal{D}$ takes colimits of small higher covering diagrams in $\mathcal{E}$ to limits in 
$\mathcal{D}$. Since $L\colon\hat{\mathcal{C}}\rightarrow\mathcal{E}$ preserves both pullbacks and colimits, and 
higher covering diagrams are just well-indexed colimiting diagrams by virtue of descent, every higher covering 
diagram $G\colon I\rightarrow\hat{\mathcal{C}}_{/X}$ yields a higher covering diagram
$LG\colon I\rightarrow\mathcal{E}_{/LX}$ by push-forward along $L$. Thus, the composition
\[FL\colon\hat{\mathcal{C}}^{op}\rightarrow\mathcal{D}\]
takes colimits of higher covering diagrams in $\hat{\mathcal{C}}$ to limits in $\mathcal{D}$. By the first part of 
the proof it follows that $FL\colon\hat{\mathcal{C}}^{op}\rightarrow\mathcal{D}$ preserves all small limits. By
\cite[Proposition 5.5.4.20]{luriehtt} and fully faithfulness of the right adjoint
$\mathcal{E}\hookrightarrow\hat{\mathcal{C}}$, it follows that $F\colon\mathcal{E}^{op}\rightarrow\mathcal{D}$ is 
small limit preserving itself.
\end{proof}

\begin{theorem}\label{remcovtoposesexple}
Every $\infty$-topos is the $\infty$-category of higher geometric sheaves over itself. More precisely, whenever
$\mathcal{E}$ is an $\infty$-topos, we have the following.
\begin{enumerate}
\item A presheaf $\mathcal{E}^{op}\rightarrow\mathcal{S}$ (of small spaces) is higher geometric if and only if it is representable. In 
particular, the Yoneda embedding
\[y\colon\mathcal{E}\rightarrow\mathrm{Sh}_{\mathrm{Geo}}(\mathcal{E})=\mathrm{Sh}_{\mathcal{S}}(\mathcal{E})\]
is essentially surjective and hence an equivalence.
\item Suppose $\mathcal{E}$ is contained in some Grothendieck universe $\mathcal{U}$, and let $\mathcal{S}^+$ be the
$\infty$-category of large spaces. Then a presheaf $\mathcal{E}^{op}\rightarrow\mathcal{S}^+$ (of large spaces) is 
higher geometric if and only if it preserves all $\mathcal{U}$-small limits. I.e.\,
$\mathrm{Sh}_{\mathcal{S}^+}(\mathcal{E})$ is the $\infty$-category of large higher geometric sheaves on
$\mathcal{E}$.
\end{enumerate}
\end{theorem}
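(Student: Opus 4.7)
The plan is to deduce both parts from Proposition~\ref{prophcdtoplim}, combined in Part 1 with a representability argument. Part 2 is in fact immediate: applying Proposition~\ref{prophcdtoplim} with $\mathcal{D}=\mathcal{S}^+$, which admits all $\mathcal{U}$-small limits, identifies $\mathrm{Sh}_{\mathcal{S}^+}(\mathcal{E})$ (by definition the $\mathcal{U}$-small-limit-preserving functors $\mathcal{E}^{op}\to\mathcal{S}^+$) with the $\infty$-category of large higher geometric presheaves on $\mathcal{E}$. The identification $\mathrm{Sh}_{\mathrm{hcd}}(\mathcal{E})=\mathrm{Sh}_{\mathcal{S}}(\mathcal{E})$ stated at the end of Part 1 is the same application with $\mathcal{D}=\mathcal{S}$.

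For the substantive content of Part 1, first note that the Yoneda embedding $y\colon\mathcal{E}\to\mathrm{Fun}(\mathcal{E}^{op},\mathcal{S})$ is fully faithful and that each representable $\mathcal{E}(-,X)$ preserves all small limits in its contravariant argument. By Proposition~\ref{prophcdtoplim} (applied to $\mathcal{D}=\mathcal{S}$), each $\mathcal{E}(-,X)$ is therefore higher geometric, so $y$ factors through $\mathrm{Sh}_{\mathrm{hcd}}(\mathcal{E})$ as a fully faithful functor. The remaining task, and the main obstacle, is essential surjectivity: given a higher geometric presheaf $F\colon\mathcal{E}^{op}\to\mathcal{S}$---which, by Proposition~\ref{prophcdtoplim}, preserves all small limits---I need to exhibit $X\in\mathcal{E}$ with $F\simeq y(X)$.

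For this, I would exploit that, being an $\infty$-topos, $\mathcal{E}$ arises by \cite[Proposition 6.1.5.3]{luriehtt} as a left exact accessible localization $L\colon\hat{\mathcal{C}}\to\mathcal{E}$ of a presheaf $\infty$-category on some small $\mathcal{C}$. Since $L$ is a left adjoint, $L^{op}$ preserves limits, so the composite $F\circ L^{op}\colon\hat{\mathcal{C}}^{op}\to\mathcal{S}$ is a small-limit-preserving presheaf on the free cocompletion $\hat{\mathcal{C}}$. Such functors are automatically representable: the restriction along $y\colon\mathcal{C}\hookrightarrow\hat{\mathcal{C}}$ defines an object $Y\in\hat{\mathcal{C}}$, and the universal property of $\hat{\mathcal{C}}$ (every $Z\in\hat{\mathcal{C}}$ is a small colimit of representables, which $F\circ L^{op}$ turns into limits) yields $F\circ L^{op}\simeq\mathrm{Map}_{\hat{\mathcal{C}}}(-,Y)$.

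Finally, I would verify that $Y$ is $L$-local, hence lies in $\mathcal{E}$: for any $L$-equivalence $w$ in $\hat{\mathcal{C}}$, $L(w)$ is an equivalence in $\mathcal{E}$, so $\mathrm{Map}_{\hat{\mathcal{C}}}(w,Y)\simeq(F\circ L^{op})(w)=F(L(w))$ is an equivalence too. The reflective adjunction $L\dashv\iota$ then gives $F(Z)\simeq\mathrm{Map}_{\hat{\mathcal{C}}}(Z,Y)\simeq\mathrm{Map}_{\mathcal{E}}(Z,Y)$ for all $Z\in\mathcal{E}$, so $F\simeq y(Y)$, finishing essential surjectivity. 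The size hypothesis that $F$ lands in $\mathcal{S}$ (rather than $\mathcal{S}^+$) is exactly what makes the representing presheaf $Y$ land in the locally small category $\hat{\mathcal{C}}$, and explains why Part 2 is formulated separately without a representability claim.
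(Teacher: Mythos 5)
Your proof is correct and follows essentially the same route as the paper: both parts are deduced from Proposition~\ref{prophcdtoplim} applied with $\mathcal{D}=\mathcal{S}$ and $\mathcal{D}=\mathcal{S}^+$ respectively. The only difference is that you explicitly supply the representability step for Part 1 (that a small-limit-preserving presheaf of small spaces on a presentable $\infty$-category is representable, via the localization presentation of $\mathcal{E}$), which the paper leaves implicit as the standard representability criterion, cf.\ \cite[Proposition 5.5.2.2]{luriehtt}.
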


\begin{proof}
Both statements follow from Proposition~\ref{prophcdtoplim} for $\mathcal{D}=\mathcal{S}$ in the first case and
$\mathcal{D}=\mathcal{S}^+$ in the second.
\end{proof}

%
%

In the next proposition we construct a class of examples of (small) $\infty$-categories whose $\kappa$-coherent
and higher $\kappa$-geometric sheaf theories provably differ. In particular, it will show that 
Theorem~\ref{remcovtoposesexple} does not hold for the infinitary-coherent sheaf theory of an $\infty$-topos. In that 
sense, it follows that the infinitary-coherent Grothendieck topology on an $\infty$-topos $\mathcal{E}$ is 
insufficient to recover $\mathcal{E}$ as a sheaf theory over itself. Therefore, we first state and prove one more 
general lemma.

\begin{lemma}\label{lemmaprecorcovnotequcoh}
Let $\mathcal{E}$ be an $\infty$-topos. Then the hypercompletion endofunctor $\tau_{\infty}\colon\mathcal{E}\rightarrow\mathcal{E}$ 
associated to the left exact localization $\mathcal{E}\rightarrow\tau_{\infty}(\mathcal{E})$
\cite[Section 6.5.2]{luriehtt} preserves effective epimorphisms and coproducts.
\end{lemma}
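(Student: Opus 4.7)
The plan is to factor the hypercompletion endofunctor as $\tau_\infty=\iota\circ L$, where $L\colon\mathcal{E}\to\tau_\infty(\mathcal{E})$ is the (left exact) left adjoint to the inclusion $\iota\colon\tau_\infty(\mathcal{E})\hookrightarrow\mathcal{E}$. As a left adjoint $L$ preserves all colimits, and being left exact it preserves pullbacks as well. Since effective epimorphisms are characterised as those maps $f$ for which $|\check{C}(f)|\to Y$ is an equivalence, and coproducts are colimits, $L$ preserves both. The task therefore reduces to showing that $\iota$ does the same.

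For effective epimorphisms, I will use the criterion that a morphism $f$ in an $\infty$-topos is an effective epimorphism if and only if its $(-1)$-truncation is an equivalence. Since every truncated object is hypercomplete (by Whitehead's theorem applied to its constant Postnikov tower), the full subcategories of $(-1)$-truncated objects in $\mathcal{E}$ and in $\tau_\infty(\mathcal{E})$ coincide; it follows that $\tau_{-1}^{\tau_\infty(\mathcal{E})}(X)\simeq\tau_{-1}^{\mathcal{E}}(\iota X)$ naturally in $X\in\tau_\infty(\mathcal{E})$. Hence $\iota$ preserves $(-1)$-truncations of morphisms, and in particular preserves effective epimorphisms.

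For coproducts I will show directly that any coproduct $\coprod_{i\in I}X_i$ of hypercomplete objects in $\mathcal{E}$ is again hypercomplete. Extensivity of $\mathcal{E}$ yields $\mathcal{E}_{/\coprod_i\ast}\simeq\prod_i\mathcal{E}$, so a morphism $B\to\coprod_iX_i$ is equivalently the data of a map $\phi\colon B\to\coprod_i\ast$, inducing a decomposition $B\simeq\coprod_iB_\phi^{(i)}$, together with a tuple of morphisms $(B_\phi^{(i)}\to X_i)_{i\in I}$. Hence $\mathrm{Map}(B,\coprod_iX_i)$ is the total space of a fibration over $\mathrm{Map}(B,\coprod_i\ast)$ with fiber $\prod_i\mathrm{Map}(B_\phi^{(i)},X_i)$ over $\phi$. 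For an $\infty$-connected $f\colon A\to B$, the map of bases $\mathrm{Map}(B,\coprod_i\ast)\to\mathrm{Map}(A,\coprod_i\ast)$ is an equivalence because $\coprod_i\ast$ is $0$-truncated hence hypercomplete; fiberwise, each pullback $A_\phi^{(i)}\to B_\phi^{(i)}$ of $f$ is $\infty$-connected by pullback-stability of $\infty$-connected maps, and hypercompleteness of $X_i$ yields $\mathrm{Map}(B_\phi^{(i)},X_i)\simeq\mathrm{Map}(A_\phi^{(i)},X_i)$. Assembling these equivalences shows $\mathrm{Map}(B,\coprod_iX_i)\simeq\mathrm{Map}(A,\coprod_iX_i)$, so $\coprod_iX_i$ is $f$-local for every $\infty$-connected $f$.

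The main technical obstacle is the coproduct part: one must carefully fiber the mapping space over $\mathrm{Map}(B,\coprod_i\ast)$ via extensivity and combine this with the pullback behaviour of $\infty$-connected maps. By contrast, the preservation of effective epimorphisms is a formal consequence of the fact that the subcategories of $(-1)$-truncated objects of $\mathcal{E}$ and $\tau_\infty(\mathcal{E})$ agree.
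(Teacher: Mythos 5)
Your proof is correct, but it reorganizes the argument and, for effective epimorphisms, uses a genuinely different mechanism than the paper. The paper does not factor $\tau_\infty$ through the localization: for effective epimorphisms it simply writes down the naturality square of the unit $\eta\colon\mathrm{id}\rightarrow\tau_\infty$, observes that the units are $\infty$-connected and hence effective epimorphisms, and concludes by compositionality and right cancellability of effective epimorphisms (\cite[Corollary 6.2.3.12]{luriehtt}). That argument is two lines and stays entirely inside $\mathcal{E}$, whereas your route via the agreement of the $(-1)$-truncated subcategories of $\mathcal{E}$ and $\tau_\infty(\mathcal{E})$ is also sound but needs one more word of care: effective epimorphy is detected by the $(-1)$-truncation in the \emph{slice} over the codomain, so you should really invoke the coincidence of $(-1)$-truncated objects of $\mathcal{E}_{/Y}$ and $\tau_\infty(\mathcal{E})_{/Y}$ for hypercomplete $Y$ (which holds, since a monomorphism into a hypercomplete object has hypercomplete domain); in exchange your version proves the stronger statement that $\iota$ preserves image factorizations. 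For coproducts both proofs rest on the same key claim -- that hypercomplete objects are closed under coproducts, using extensivity and pullback-stability of $\infty$-connected maps -- but you package it as an equivalence of mapping spaces fibered over $\mathrm{Map}(-,\coprod_i\ast)$, using that $\coprod_i\ast$ is $0$-truncated and hence hypercomplete, while the paper decomposes the codomain via the $0$-truncation $\tau_0(f)$ (using that $\tau_0$ preserves coproducts) and solves the resulting componentwise lifting problems, with essential uniqueness coming from the $(-1)$-truncatedness of the coproduct inclusions. Your fibered mapping-space formulation handles the full homotopical content of $f$-locality somewhat more transparently; the paper's version makes the role of the Postnikov truncations explicit.
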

\begin{proof}
First, to see that hypercompletion in an $\infty$-topos $\mathcal{E}$ always preserves effective 
epimorphisms, let $f\colon E\rightarrow B$ be an effective epimorphism in $\mathcal{E}$. We obtain the following map 
of hypercompletions in $\mathcal{E}$. 
\[\xymatrix{
E\ar[r]^(.4){\eta_E}\ar[d]_f & \tau_{\infty}(E)\ar[d]^{\tau_{\infty}(f)} \\
B\ar[r]_(.4){\eta_B} & \tau_{\infty}(B)
}\]
The map $f$ is an effective epimorphism by assumption, the two vertical maps are $\infty$-connected 
and as such in particular effective epimorphisms as well. It follows that $\tau_{\infty}(f)$ is an effective epimorphism 
by compositionality and right cancellability of effective epimorphisms \cite[Corollary 6.2.3.12]{luriehtt}.

To see that $\tau_{\infty}$ preserves coproducts, it suffices to show that the class of hypercomplete objects in
$\mathcal{E}$ is closed under coproducts. Therefore suppose that $I$ is a set and that we are given a collection
$\{X_i\mid i\in I\}$ of hypercomplete objects in $\mathcal{E}$. Let $f\colon A\rightarrow B$ be $\infty$-connected 
and $g\colon A\rightarrow X$ for $X\simeq\coprod_{i\in I}X_i$ be a map. We are to show that $g$ lifts along $f$ in 
essentially unique fashion. Therefore, note that since $\mathcal{E}$ is extensive, for $A_i\simeq A\times_X X_i$ we 
obtain a collection of maps $g_i\colon A_i\rightarrow X_i$ for $i\in I$ together with an equivalence
\[\xymatrix{
A\ar@{}[d]|{\rotatebox[origin=c]{-90}{$\simeq$}}\ar@/^/[dr]^{g} & \\
\coprod_{i\in I} A_I\ar[r]_{\coprod_{i\in I}{g_i}} & \coprod_{i\in I}X_i.
}\]
Furthermore, we obtain maps $f_i\colon A_i\rightarrow B$ such that $f\simeq (f_i)_{i\in I}$. Since $f$ is
$\infty$-connected, its $0$-truncation
\[\xymatrix{
\coprod_{i\in I}A_i\ar[r]^{\eta^0_A}\ar[d]_{(f_i)_{i\in I}} & \tau_0(\coprod_{i\in I}A_i)\ar[d]_{\rotatebox[origin=c]{90}{$\simeq$}}^{\tau_0((f_i)_{i\in I})} \\
B\ar[r]_{\eta^0_B} & \tau_0(B)
}
\]
is an equivalence. Now, $n$-truncation $\tau_n$ for $n\geq 0$ preserves coproducts, because the localization
$\mathcal{E}\rightarrow\tau_n\mathcal{E}$ is generated by the tensors
$E\otimes\partial\Delta^{n+1}\rightarrow E\otimes\Delta^0$ for $E\in\mathcal{E}$, and the $(n+1)$-sphere for
$n\geq 0$ is connected \cite[Proposition 5.5.6.18]{luriehtt}. Again using that $\mathcal{E}$ is extensive, for
$B_i\simeq B\times_{\tau_0(B)}\tau_0(A_i)$ the map $f\colon A\rightarrow B$ is equivalent to the coproduct
$\coprod_{i\in I}f_i\colon\coprod_{i\in I}A_i\rightarrow \coprod_{i\in I}B_i$. Since $f$ is $\infty$-connected and 
the class of $\infty$-connected maps is closed under pullback, each $f_i$ is $\infty$-connected as well. We thus are 
given a lifting problem of the form
\[\xymatrix{
\coprod_{i\in I}A_i\ar[d]_{\coprod_{i\in I}f_i}\ar[r]^{\coprod_{i\in I}g_i} & \coprod_{i\in I}X_i. \\
\coprod_{i\in I}B_i
}\]
As each $X_i$ is hypercomplete and each $f_i$ is $\infty$-connected, this admits a solution. This solution is 
essentially unique whenever 
every map of type $B_i\rightarrow X$ extending $g_i\colon A_i\rightarrow X_i$ factors through the component
$X_i\hookrightarrow X$. This indeed is satisfied, since the inclusion $X_i\hookrightarrow\coprod_{i\in I}X_i$ is
$(-1)$-truncated and $f_i\colon A_i\rightarrow B_i$ is $(-1)$-connected, and so the square
\[\xymatrix{
A_i\ar[d]_{f_i}\ar[r]^{g_i} & X_i\ar@{^(->}[d] \\
B_i\ar[r] & \coprod_{i\in I} X_i
}\]
exhibits a lift.
\end{proof}

\begin{remark}
Although not needed here, the proof of Lemma~\ref{lemmaprecorcovnotequcoh} applies not only to the 
hypercompletion endofunctor $\tau_{\infty}\colon\mathcal{E}\rightarrow\mathcal{E}$, but as well to the finite truncation 
functors $\tau_{\leq n}\colon\mathcal{E}\rightarrow\mathcal{E}$ for every $n\geq -1$ in the case of effective 
epimorphisms, and for every $n\geq 0$ in the case of coproducts. For such natural numbers $n<\infty$, we only need that 
$\mathcal{E}$ is presentable and regular for the first case, and furthermore extensive for the second case.
\end{remark}

\begin{proposition}\label{propcovnotequcoh}
There are $\infty$-toposes $\mathcal{E}$ such that the canonical inclusion
\[\mathrm{Sh}_{\mathrm{Geo}}(\mathcal{C})\hookrightarrow \mathrm{Sh}_{\mathrm{Coh}}(\mathcal{C})\]
is non-trivial. Accordingly, for all uncountable regular cardinals $\kappa$ large enough there is a small
$\infty$-category $\mathcal{C}$ (with finite limits, $\kappa$-small colimits and descent) such that 
the cotopological localization
\[\mathrm{Sh}_{\mathrm{Coh}_{\kappa}}(\mathcal{C})\rightarrow\mathrm{Sh}_{\mathrm{Geo}_{\kappa}}(\mathcal{C})\]
from Proposition~\ref{propcohcotoptopfac} is non-trivial. 
\end{proposition}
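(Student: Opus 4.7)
The strategy rests on Theorem~\ref{remcovtoposesexple}, by which $\mathrm{Sh}_{\mathrm{hcd}}(\mathcal{E})\simeq\mathcal{E}$ for every $\infty$-topos $\mathcal{E}$. Since representables are ordinary geometric sheaves — Čech nerves of effective epimorphisms being effective in an $\infty$-topos — the Yoneda inclusion realizes $\mathrm{Sh}_{\mathrm{hcd}}(\mathcal{E})$ as the full subcategory of representables inside $\mathrm{Sh}_{\mathrm{ocd}}(\mathcal{E})$. Hence the cotopological localization is non-trivial iff this inclusion is not essentially surjective; equivalently, iff $\mathcal{E}$ admits a non-representable ordinary geometric sheaf. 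This reduces the first statement to constructing such a sheaf for some specific $\mathcal{E}$.

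I would take $\mathcal{E}$ to be a non-hypercomplete $\infty$-topos — for instance the Dugger--Hollander--Isaksen $\infty$-topos invoked in the proof of Proposition~\ref{propregtop} — and fix an $\infty$-connected non-equivalence $f\colon A\to B$ in $\mathcal{E}$. The plan then exploits Lemma~\ref{lemmaprecorcovnotequcoh}: since $\tau_{\infty}$ preserves effective epimorphisms, coproducts and (being a left exact accessible localization) pullbacks, it is a morphism of ordinary geometric sites $\mathcal{E}\to\tau_{\infty}\mathcal{E}$. Precomposing sheaves on $\tau_{\infty}\mathcal{E}$ with $\tau_{\infty}$ supplies ordinary geometric sheaves on $\mathcal{E}$. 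The principal obstacle is to engineer such a sheaf so that it escapes the essential image of $y\colon\mathcal{E}\hookrightarrow\mathrm{Sh}_{\mathrm{ocd}}(\mathcal{E})$: the naive choice $E\mapsto\mathcal{E}(\tau_{\infty}E,\tau_{\infty}B)$ collapses via the hypercompletion adjunction to the representable $y(\tau_{\infty}B)$, so a genuine construction must use diagrams whose failure of higher descent is itself witnessed by $f$ (rather than captured already by effective-epi cover data). I expect this engineering step — simultaneously verifying Čech descent and non-representability — to be the main technical hurdle.

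For the second statement, the natural route is to pass to a small sub-$\kappa$-logos $\mathcal{C}\subseteq\mathcal{E}$ of $\kappa$-compact objects for a sufficiently large regular cardinal $\kappa$. By Lemma~\ref{lemmacovfunctorsdescent} the higher covering diagrams in $\mathcal{C}$ reduce to the $\kappa$-small pullback-preserving ones, so the construction above restricts cleanly to an ordinary geometric sheaf on $\mathcal{C}$; one is then left to verify that the restriction is still non-representable on $\mathcal{C}$ and fails to be a higher $\kappa$-geometric sheaf there. The remaining obstacle is to balance the $\kappa$-logos closure conditions on $\mathcal{C}$ (pullbacks, universal $\kappa$-small colimits, inclusion of $f$ and of the supporting higher covering diagram) against the persistence of non-representability under restriction. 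In practice it may be cleaner to carry out the entire construction directly inside $\mathcal{C}$ from the outset, so that the small case subsumes the $\infty$-topos case rather than depending on it.
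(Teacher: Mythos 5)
Your reduction is the right one: by Theorem~\ref{remcovtoposesexple} the higher geometric sheaves on an $\infty$-topos $\mathcal{E}$ are exactly the representables, so non-triviality of the cotopological localization amounts to exhibiting an ordinary geometric sheaf on $\mathcal{E}$ that is not representable, and your choice of the Dugger--Hollander--Isaksen topos and of Lemma~\ref{lemmaprecorcovnotequcoh} as the tool for producing ordinary geometric sheaves by precomposition with $\tau_{\infty}$ both match the paper. But the proof stops exactly where it needs to start: you flag the construction of the witnessing sheaf as ``the main technical hurdle'' and leave it open, so there is a genuine gap. The missing idea is small but essential. You correctly observe that $E'\mapsto\mathcal{E}(\tau_{\infty}E',\tau_{\infty}B)$ collapses to $y(\tau_{\infty}B)$ because the \emph{target} is hypercomplete and hence local for the units $E'\to\tau_{\infty}E'$; the fix is simply to aim the representable at a \emph{non-hypercomplete} object. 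The paper takes $X:=yE\circ\tau_{\infty}^{op}$, i.e.\ $E'\mapsto\mathcal{E}(\tau_{\infty}E',E)$ with $E$ non-hypercomplete. This is still an ordinary geometric sheaf by Lemma~\ref{lemmaprecorcovnotequcoh} (precomposition with a left exact functor preserving effective epimorphisms and coproducts preserves the ordinary sheaf condition), and it is not representable --- indeed not higher geometric --- which one sees without any separate ``\v{C}ech descent versus non-representability'' engineering: assume $\mathcal{E}$ is generated by a set $G$ of \emph{hypercomplete} objects closed under fibre products (true for sheaves on any sub-canonical $1$-site with pullbacks, hence for DHI). Then $\pi_E\colon G_{/E}\to\mathcal{E}$ is a higher covering diagram with colimit $E$ by Lemma~\ref{lemmacovfunctorsdescent}, while $\tau_{\infty}$ fixes each generator, so the comparison map $\mathrm{colim}\,\tau_{\infty}\pi_E\to\tau_{\infty}(\mathrm{colim}\,\pi_E)$ is the non-invertible unit $E\to\tau_{\infty}E$; if $X$ took this covering colimit to a limit, $yE$ applied to that unit would be an equivalence, producing a retraction of $E\to\tau_{\infty}E$ and forcing $E$ to be hypercomplete, a contradiction.

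Two further points. First, your claim that $\tau_{\infty}$ is ``a morphism of ordinary geometric sites $\mathcal{E}\to\tau_{\infty}\mathcal{E}$'' and that one should precompose sheaves \emph{on $\tau_{\infty}\mathcal{E}$} steers you toward presheaves factoring through the hypercompletion, which is precisely the class that collapses to representables; the paper instead precomposes an arbitrary ordinary sheaf on $\mathcal{E}$ (a representable of a non-hypercomplete object) with the \emph{endofunctor} $\tau_{\infty}\colon\mathcal{E}\to\mathcal{E}$. Second, for the small case your plan to ``restrict'' from $\mathcal{E}$ is backwards relative to the paper, which (as you suspect in your last sentence) carries out the whole argument inside the $\kappa$-logos $\mathcal{C}$ of $\mu$-compact objects from the outset, choosing $\mu$ sharply larger than $\kappa$ so that $\tau_{\infty}$ restricts to an endofunctor of $\mathcal{C}$ and $G_{/E}$ is $\kappa$-small; note that Theorem~\ref{remcovtoposesexple} is not available for $\mathcal{C}$ itself, so in the small case one must argue directly that $yE\circ\tau_{\infty}^{op}$ fails the higher sheaf condition rather than via non-representability.
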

\begin{proof}
Let $\mathcal{E}$ be an $\infty$-topos with the following two properties.
\begin{enumerate}
\item $\mathcal{E}$ is generated by a set $G$ of objects which is closed under fiber products and such that each 
$g\in G$ is hypercomplete.
\item $\mathcal{E}$ is not hypercomplete itself.
\end{enumerate}
Let $\kappa\geq |G|$ be any regular cardinal such that there is a $\kappa$-compact non-hypercomplete object 
$E\in\mathcal{E}$, and such that the $\infty$-category $G_{/E}$ is $\kappa$-small. Let $\mu$ be a regular 
cardinal sharply larger than $\kappa$ \cite[Definition 5.4.2.8]{luriehtt} such that the accessible endofunctor 
$T_{\infty}\colon\mathcal{E}\rightarrow\mathcal{E}$ takes $\mu$-small objects to $\mu$-small objects
\cite[Lemma 8.3.4]{thesis}. Let $\mathcal{C}\subset\mathcal{E}$ be the full sub-$\infty$-category of $\mu$-compact 
objects. Then $\mathcal{C}$ is small, $\kappa$-cocomplete, left exact, and has descent for $\kappa$-small diagrams. 

Now, for every $\kappa$-coherent sheaf $X$ on $\mathcal{C}$, the precomposition
$X\tau_{\infty}^{op}\colon\mathcal{C}^{op}\rightarrow\mathcal{S}$ with the endofunctor
$\tau_{\infty}\colon\mathcal{C}\rightarrow\mathcal{C}$ is a $\kappa$-coherent sheaf again by
Lemma~\ref{lemmaprecorcovnotequcoh} and by the fact that $\tau_{\infty}\colon\mathcal{C}\rightarrow\mathcal{C}$ 
preserves finite limits. In the following we show that the composition
$yE\tau_{\infty}^{op}\colon\mathcal{C}^{op}\rightarrow\mathcal{S}$ is not higher $\kappa$-geometric. Since the 
representable $yE$ however is $\kappa$-coherent, this proves the statement (assuming that the
$\infty$-topos $\mathcal{E}$ exists).

Therefore, we use that the inclusion $G_{/E}\rightarrow\mathcal{C}_{/E}$ is colimiting, and consider the 
induced map
\begin{align}\label{equcorcovnotequcoh}
\underset{g\in G_{/E}}{\mathrm{colim}}(\tau_{\infty}(g))\rightarrow \tau_{\infty}(\underset{g\in G_{/E}}{\mathrm{colim}}g).
\end{align}
As $\tau_{\infty}(g)\simeq g$ for all $g\in G$ by Property 1, the domain of (\ref{equcorcovnotequcoh}) is equivalent 
to $E$ itself, while its codomain is the hypercompletion of $E$ in $\mathcal{E}$ by construction.
If the representable $yE$ applied to the map (\ref{equcorcovnotequcoh}) was an equivalence of spaces, we 
would obtain a retract to the map (\ref{equcorcovnotequcoh}) in $\mathcal{C}$. Since the collection of hypercomplete 
objects is closed under retracts, that would imply that $E$ is hypercomplete as well, which is contrary to our 
assumption.
But $yE$ preserves colimits itself, and so it follows that the presheaf
$yE\tau_{\infty}\colon\mathcal{C}^{op}\rightarrow\mathcal{S}$ does not preserve the colimit of the inclusion 
$G_{/E}\rightarrow\mathcal{C}_{/E}$. But the $\infty$-category $G_{/E}$ has pullbacks by virtue of 
Property 1. These pullbacks are furthermore preserved by $G_{/E}\rightarrow\mathcal{C}_{/E}$. It follows that
$G_{/E}\rightarrow\mathcal{C}_{/E}$ is a higher covering diagram by Example~\ref{explecovgen} (or again via 
Corollary~\ref{corhcdlogos} as $\mathcal{C}$ has descent for $\kappa$-small diagrams and $G_{/E}$ is $\kappa$-small). 
Consequently, the presheaf $yE\tau_{\infty}$ is not a higher $\kappa$-geometric sheaf. It yet is
$\kappa$-coherent by the observations put forward at the beginning of the proof.

In order to finish the proof, we are left to present an $\infty$-topos $\mathcal{E}$ which has the 
properties listed in 1 and 2.
Therefore, we simply note that the $\infty$-topos of sheaves $\mathrm{Sh}_{J}(\mathcal{C})$ on any small
sub-canonical 1-site $(\mathcal{C},J)$ where $\mathcal{C}$ has pullbacks satisfies Property 1. Indeed, due 
to sub-canonicity $\mathrm{Sh}_{J}(\mathcal{C})$ is generated by the representables
$\mathcal{C}\xrightarrow{y}\mathrm{Sh}_{J}(\mathcal{C})$. 
As $\mathcal{C}$ is of finite homotopy type, each representable is of finite homotopy type and thus is in particular 
hypercomplete. An example of such an $\infty$-topos which is not hypercomplete itself is the localic
Dugger-Hollander-Isaksen-topos we used for other examples as well \cite[Section 11.3]{rezkhtytps}.
\end{proof}


\section{Higher geometric $\infty$-categories}\label{secsites}


In this section we propose a definition of the $\infty$-category of higher ($\kappa$-)geometric $\infty$-categories 
(equipped with their canonical higher sites) and relate it to the $\infty$-category of $\infty$-toposes 
\cite{luriehtt}.

\begin{definition}\label{defhgeocat}
An $\infty$-category $\mathcal{C}$ with pullbacks is \emph{locally higher $\kappa$-geometric} for some regular 
cardinal $\kappa$ if every well-indexed diagram $U\colon I\rightarrow\mathcal{C}_{/C}$ with $\kappa$-small index $I$ 
admits a factorization
\[I\xrightarrow{U}\mathcal{C}_{/B}\xrightarrow{\Sigma_f}\mathcal{C}_{/C}\]
through a higher covering diagram. An $\infty$-category $\mathcal{C}$ with pullbacks is \emph{locally higher 
geometric} if it is higher $\kappa$-geometric for all regular cardinals $\kappa$. A locally higher
($\kappa$-)geometric $\infty$-category $\mathcal{C}$ is \emph{higher ($\kappa$-)geometric} if it has a terminal 
object.
\end{definition}

\begin{lemma}\label{lemmageoisextord}
Every locally $\kappa$-geometric (geometric) $\infty$-category $\mathcal{C}$ is in particular $\kappa$-extensive (for 
all $\kappa$) as well as locally $\kappa$-coherent (for all uncountable $\kappa$).
\end{lemma}
\begin{proof}
Follows directly from Corollary~\ref{corextdescalt} and Theorem~\ref{corcharordgeocat}.
\end{proof}

\begin{lemma}\label{lemmahgeocatalt}
For any $\infty$-category $\mathcal{C}$, the following are equivalent. 
\begin{enumerate}
\item $\mathcal{C}$ is locally higher ($\kappa$-)geometric.
\item All slices $\mathcal{C}_{/B}$ of $\mathcal{C}$ are higher ($\kappa$-)geometric.
\item $\mathcal{C}$ has pullbacks, and every ($\kappa$-)small diagram $U\colon I\rightarrow\mathcal{C}_{/C}$ admits a 
factorization
\[I\xrightarrow{U}\mathcal{C}_{/B}\xrightarrow{\Sigma_f}\mathcal{C}_{/C}\]
through a descent diagram.
\end{enumerate}
In particular, a higher geometric $\infty$-category is precisely an $\infty$-category with finite limits, small 
colimits and descent.
\end{lemma}
\begin{proof}
The equivalence of Parts 1 and 2 is straight-forward. The equivalence of Parts 2 and 3 follows from 
Lemma~\ref{lemmadeschcd} and Lemma~\ref{lemmahcddesc} (as summarized in Remark~\ref{remdeschcdequiv}).
\end{proof}



Following the 1-categorical tradition captured by \cite[Proposition 1.4.8]{caramellobook}, we define locally
higher ($\kappa$-)geometric functors to be the pullback-preserving functors which preserve ($\kappa$-)geometric 
covers. 

%

\begin{definition}
Let $\kappa$ be a regular cardinal. A functor $F\colon\mathcal{C}\rightarrow\mathcal{D}$ between locally
higher $\kappa$-geometric $\infty$-categories is \emph{locally higher $\kappa$-geometric} if it preserves pullbacks 
as well as $\kappa$-small higher covering diagrams. That is to say, whenever $U\colon I\rightarrow\mathcal{C}_{/B}$ 
is a $\kappa$-small higher covering diagram, then so is $FU\colon I\rightarrow\mathcal{D}_{/FB}$. A locally higher
$\kappa$-geometric functor $F\colon\mathcal{C}\rightarrow\mathcal{D}$ between higher $\kappa$-geometric
$\infty$-categories is \emph{higher $\kappa$-geometric} if $F$ preserves the terminal object. A functor between 
(locally) higher geometric $\infty$-categories is (locally) higher geometric if it is (locally) higher
$\kappa$-geometric for all $\kappa$.
\end{definition}

\begin{definition}
The $\infty$-category
$\mathrm{GeoCat}_{\kappa}\subset\mathrm{Cat}$
is the sub-$\infty$-category of small higher $\kappa$-geometric $\infty$-categories, higher $\kappa$-geometric 
functors and all higher cells. Accordingly, the (superlarge) $\infty$-category
$\mathrm{GeoCAT}_{\kappa}\subset\mathrm{CAT}$
is the sub-$\infty$-category of all higher $\kappa$-geometric $\infty$-categories, higher $\kappa$-geometric functors 
and all higher cells. The (superlarge) $\infty$-category $\mathrm{GeoCAT}$
denotes the $\infty$-category of large higher geometric $\infty$-categories and higher geometric functors.
\end{definition}

\begin{lemma}\label{lemmageofunctors}
For any pullback-preserving functor $F\colon\mathcal{C}\rightarrow\mathcal{D}$ between locally higher
($\kappa$-)geometric $\infty$-categories the following are equivalent.
\begin{enumerate}
\item $F$ is locally higher ($\kappa$-)geometric.
\item $F$ preserves colimits of ($\kappa$-)small higher covering diagrams.
\end{enumerate}
Furthermore, either condition is equivalent to the following (whenever $\kappa$ is regular and large enough so
$\mathcal{C}$ itself is $\kappa$-small).
\begin{enumerate}
\item[3.] $F$ preserves ($\kappa$-)small colimits.
\end{enumerate}
\end{lemma}
\begin{proof}
The equivalence of Parts 1 and 2 is immediate by the fact that $F$ preserves pullbacks, that higher covering diagrams 
of type $I\rightarrow\mathcal{C}_{/B}$ are by definition colimiting over $B$, and that colimits are unique up to 
equivalence whenever they exist.

To show the equivalence of Parts 2 and 3, suppose $F\colon\mathcal{C}\rightarrow\mathcal{D}$ is a pullback-preserving 
functor between higher ($\kappa$-)geometric $\infty$-categories. By Lemma~\ref{lemmahgeocatalt} 
we are to show that $F$ preserves colimits of all small diagrams in $\mathcal{C}$ if and only if it preserves 
colimits of all small well-indexed diagrams in $\mathcal{C}$.
Therefore, suppose $F$ preserves all ($\kappa$-)small well-indexed colimits and let $U\colon I\rightarrow\mathcal{C}$ 
be any ($\kappa$-)small diagram. We may factor $U$ into a right anodyne inclusion $I\hookrightarrow RI$ followed by 
right fibration $RU\colon RI\twoheadrightarrow\mathcal{C}$. This factors $U$ into a cofinal functor followed by a 
well-indexed small diagram by Lemma~\ref{lemmarfibwellind}. The latter is again $\kappa$-small whenever both $I$ 
and $\mathcal{C}$ are $\kappa$-small, as in this case each fiber
$RU^{-1}(C)\simeq\mathrm{colim}_{i\in I}\mathcal{C}(C,U_i)$ is $\kappa$-small. Hence, $F$ preserves the 
colimit of $RU$ by assumption, and hence preserves the colimit of $U$ by virtue of cofinality of the inclusion 
$I\hookrightarrow RI$.

\end{proof}

Let $\mathrm{LTop}$ denote the (superlarge) $\infty$-category of $\infty$-toposes and left exact left adjoints 
\cite[Definition 6.3.1.5]{luriehtt}. 

\begin{proposition}\label{propgeofunctors}
Every $\infty$-topos is a higher geometric $\infty$-category. A functor
$F\colon\mathcal{C}\rightarrow\mathcal{D}$ between $\infty$-toposes is higher geometric if and only if it is
left exact and cocontinuous. In particular, there is a fully faithful forgetful functor
$U\colon\mathrm{LTop}\rightarrow\mathrm{GeoCAT}$.
\end{proposition}
\begin{proof}
The fact that $\infty$-toposes are higher geometric follows directly from Corollary~\ref{corhcdlogos} and the fact 
that $\infty$-toposes are cocomplete. The second statement follows directly from Lemma~\ref{lemmageofunctors}.
\end{proof}

\begin{theorem}\label{propgeocat1}
Let $\mathcal{C}$ be a small higher $\kappa$-geometric $\infty$-category for some cardinal $\kappa$. Then the 
sheafified Yoneda embedding
\begin{align}\label{equgeocatyoneda}
y\colon\mathcal{C}\rightarrow\mathrm{Sh}_{\mathrm{Geo}_{\kappa}}(\mathcal{C})
\end{align}
is higher $\kappa$-geometric. For all $\infty$-toposes $\mathcal{D}$, the induced restriction
\[y^{\ast}\colon\mathrm{LTop}(\mathrm{Sh}_{\mathrm{Geo}_{\kappa}}(\mathcal{C}),\mathcal{D})\rightarrow\mathrm{GeoCAT}_{\kappa}(\mathcal{C},\mathcal{D})\]
along $y$ is an equivalence of hom-spaces.
\end{theorem}
\begin{proof}
The embedding (\ref{equgeocatyoneda}) is left exact and preserves colimits of $\kappa$-small higher covering diagrams 
by construction. Hence, the functor (\ref{equgeocatyoneda}) is higher $\kappa$-geometric by 
Lemma~\ref{lemmageofunctors}. Let
$L\colon\hat{\mathcal{C}}\rightarrow\mathrm{Sh}_{\mathrm{Geo}_{\kappa}}(\mathcal{C})$ denote the left adjoint to 
the canonical inclusion in converse direction. Given an $\infty$-topos $\mathcal{D}$, consider the diagram
\[\xymatrix{
\mathrm{LTop}(\hat{\mathcal{C}},\mathcal{D})\ar@/^.5pc/@<.5ex>[r]^{y^{\ast}} & \mathrm{Fun}^{\mathrm{lex}}(\mathcal{C},\mathcal{D})^{\simeq}\ar@<.5ex>@/^.5pc/[l]^{y_!} \\
\mathrm{LTop}(\mathrm{Sh}_{\mathrm{Geo}_{\kappa}}(\mathcal{C}),\mathcal{D})\ar@{^(->}[u]^{L^{\ast}}\ar@{-->}@/^.5pc/@<.5ex>[r]^{y^{\ast}} & \mathrm{GeoCAT}_{\kappa}(\mathcal{C},\mathcal{D})\ar@{^(->}[u]\ar@{-->}@<.5ex>@/^.5pc/[l]^{y_!}
}\]
associated to the embedding $y\colon\mathcal{C}\rightarrow\hat{\mathcal{C}}$ and to its corestriction
$y\colon\mathcal{C}\rightarrow\mathrm{Sh}_{\mathrm{Geo}_{\kappa}}(\mathcal{C})$. The two vertical functors are fully 
faithful. The top horizontal pair $(y_!,y^{\ast})$ is an equivalence by
\cite[Theorem 5.1.5.6, Proposition 6.1.5.2]{luriehtt}. The restriction of
the top horizontal functor $y^{\ast}$ along the inclusion $L^{\ast}$ is equivalent to the restriction
$y^{\ast}\colon\mathrm{LTop}(\mathrm{Sh}_{\mathrm{Geo}_{\kappa}}(\mathcal{C}),\mathcal{D})\rightarrow\mathrm{Fun}^{\mathrm{lex}}(\mathcal{C},\mathcal{D})$. It factors through
$\mathrm{GeoCat}_{\kappa}(\mathcal{C},\mathcal{D})$ because 
$y\colon\mathcal{C}\rightarrow\mathrm{Sh}_{\mathrm{Geo}_{\kappa}}(\mathcal{C})$ is higher $\kappa$-geometric,
higher geometric morphisms between $\infty$-toposes are higher $\kappa$-geometric, and higher $\kappa$-geometric 
functors are closed under composition. The restriction of the top horizontal left Kan extension $y_!$ along the 
inclusion 
$\mathrm{GeoCat}_{\kappa}(\mathcal{C},\mathcal{D})\hookrightarrow\mathrm{Fun}^{\mathrm{lex}}(\mathcal{C},\mathcal{D})$ factors through $\mathrm{LTop}(\mathrm{Sh}_{\mathrm{Geo}_{\kappa}}(\mathcal{C}),\mathcal{D})$ via 
\cite[Proposition 5.5.4.20]{luriehtt}. It readily follows that the thereby induced bottom horizontal pair
$(y_!,y^{\ast})$ is an equivalence as well.
\end{proof}

For a given cardinal $\kappa$ let
$U_{\kappa}\colon\mathrm{GeoCAT}\hookrightarrow\mathrm{GeoCAT}_{\kappa}$ denote the obvious forgetful 
functor, and let $\iota_{\kappa}\colon\mathrm{GeoCat}_{\kappa}\rightarrow\mathrm{GeoCAT}_{\kappa}$ 
denote the canonical inclusion. We end this section with the following corollary which shows that
$\mathrm{Sh}_{\mathrm{Geo}_{\kappa}}(\mathcal{C})$ is the free $\infty$-topos generated by a small higher
$\kappa$-geometric $\infty$-category $\mathcal{C}$.

\begin{corollary}\label{propfreegeotop}
The composite forgetful functor
\begin{align}\label{equpropfreegeotop}
\mathrm{LTop}\xrightarrow{U}\mathrm{GeoCAT}\xrightarrow{U_{\kappa}}\mathrm{GeoCAT}_{\kappa}
\end{align}
has a $\iota_{\kappa}$-relative left adjoint 
\[\mathrm{Sh}_{\mathrm{Geo}_{\kappa}}(-)\colon\mathrm{GeoCat}_{\kappa}\rightarrow\mathrm{LTop}\]
for every cardinal $\kappa$.
\end{corollary}
\begin{proof}
Given a small higher $\kappa$-geometric $\infty$-category $\mathcal{C}$, the embedding
$y\colon\mathcal{C}\rightarrow\mathrm{Sh}_{\mathrm{Geo}_{\kappa}}(\mathcal{C})$ is initial in
$\mathcal{C}_{/\mathrm{LTop}}$ by Theorem~\ref{propgeocat1}, and hence is a unit which exhibits the composition 
(\ref{equpropfreegeotop}) as a $\iota_{\kappa}$-relative right adjoint.
\end{proof}

\section{Appendix on cofinal equivalence}\label{secappcof}

\begin{definition}\label{defcofequiv}
Say two diagrams $U\colon I\rightarrow\mathcal{C}$ and $V\colon J\rightarrow\mathcal{C}$ are \emph{cofinally 
equivalent} if the two post-compositions $yU\colon I\rightarrow\hat{\mathcal{C}}$ and
$yV\colon J\rightarrow\hat{\mathcal{C}}$ have equivalent colimits.
\end{definition}

The following lemma is shown in \cite[Proposition 3.9]{perronetholen} in the context of ordinary category theory. Cofinally equivalent diagrams are loc.\ cit.\ referred to as ``mutually confinal'' diagrams.

\begin{lemma}\label{lemmacofequiv}
Let $\mathcal{C}$ be a small $\infty$-category and let $U\colon I\rightarrow\mathcal{C}$ and
$V\colon J\rightarrow\mathcal{C}$ be small diagrams. Then the following are equivalent.
\begin{enumerate}
\item $U$ and $V$ are cofinally equivalent.
\item For any functor $F\colon\mathcal{C}\rightarrow\mathcal{D}$ there is an equivalence
$\mathrm{colim}_I FU\simeq\mathrm{colim}_J FV$ in $\mathcal{D}$ whenever either of the colimits exists.
\item There is a right fibration $W\colon K\twoheadrightarrow\mathcal{C}$ together with cofinal functors
$\phi\colon I\rightarrow K$ and $\psi\colon J\rightarrow K$ such that the following two triangles commute.
\[\xymatrix{
I\ar@/^1pc/[drr]^U\ar[dr]^{\phi} & & \\
 & K\ar@{->>}[r]^W & \mathcal{C} \\
J\ar@/_1pc/[urr]_V\ar[ur]^{\psi} & &  
}\]
\end{enumerate}
\end{lemma}
\begin{proof}
First, if there is a right fibration $W\colon K\twoheadrightarrow\mathcal{C}$ as stated in Part 3, every horizontal 
functor in the canonical diagram
\[\xymatrix{
\mathcal{D}_{FU/}\ar@{->>}@/_1pc/[drr]\ar@{}[r]|{\simeq} & \mathcal{D}_{FW\phi/}\ar@{->>}@/_/[dr] & \mathcal{D}_{FW/}\ar@{->>}[d]\ar[l]\ar[r] & \mathcal{D}_{FW\psi/}\ar@{->>}@/^/[dl]\ar@{}[r]|{\simeq} & \mathcal{D}_{FV/}\ar@{->>}@/^1pc/[dll] \\
 & & \mathcal{D} & & 
}\]
of left fibrations over $\mathcal{C}$ is an equivalence by \cite[Proposition 4.1.1.8]{luriehtt}. In particular, 
whenever either of the diagrams $FU$ or $FV$ has a colimit, it yields an initial object in the according
over-category, and hence induces an initial object in the respectively other over-category. The fact that the diagram 
commutes over $\mathcal{D}$ implies that the two resulting colimits coincide. Thus, Part 3 implies Part 2.

Part 2 in particular applies to the Yoneda embedding $y\colon\mathcal{C}\rightarrow\hat{\mathcal{C}}$, and so it 
implies Part 1. To prove Part 3 from Part 1, we may factor the diagram $U\colon I\rightarrow\mathcal{C}$ into a right 
anodyne inclusion
$\iota_U\colon I\rightarrow E_U$ followed by a right fibration
$\pi_U\colon E_U\twoheadrightarrow\mathcal{C}$. As right anodyne inclusions are cofinal
\cite[Proposition 4.1.1.3.]{luriehtt}, we obtain an equivalence
\[\mathrm{colim}_I(yU)\simeq\mathrm{colim}_I(y\pi_U\iota_U)\simeq\mathrm{colim}_{E_F}(y\pi_U)\]
of presheaves. For any right fibration $p\colon E\twoheadrightarrow\mathcal{C}$, the colimit of the composition
$\mathrm{colim}_E(yp)\colon E\rightarrow\hat{\mathcal{C}}$ computes the Straightening
$\mathrm{St}(p)\in\hat{\mathcal{C}}$ of $p$ (as shown explicitly in the proof of Lemma~\ref{lemmastcolim}).
The same construction applied to the diagram $V\colon J\rightarrow\mathcal{C}$ thus induces a composite equivalence
\[\mathrm{St}(\pi_U)\simeq\mathrm{St}(\pi_V)\]
in $\hat{\mathcal{C}}$. By subsequent Unstraightening we obtain an equivalence $\pi_U\simeq\pi_V$ of right fibrations 
over $\mathcal{C}$ in return, and hence the following commutative diagram.
\[\xymatrix{
 & E_U\ar[rr]^{\simeq}\ar@{->>}[dr]_{\pi_U} &  & E_V\ar@{->>}[dl]^{\pi_V} &  \\
 I\ar[rr]_U\ar[ur]^{\iota_U} & & \mathcal{C} & & J\ar[ll]^V\ar[ul]_{\iota_V}
}\]
The inclusions $\iota_U$ and $\iota_V$ are cofinal, and the post-composition of a cofinal functor with a categorical 
equivalence is again cofinal \cite[Corollary 4.1.1.9]{luriehtt}. Here, we note that contravariant equivalences 
between right fibrations indeed induce categorical equivalences of total $\infty$-categories. Both functors $\pi_U$ 
and $\pi_V$ are right fibrations by construction.
\end{proof}

Given a pre-descent diagram $U\colon I\rightarrow\mathcal{C}_{/B}$ in an $\infty$-category $\mathcal{C}$ 
(Definition~\ref{defpredescdiag}), we discussed in (\ref{equpredefpredescent}) a canonical functor of
$\infty$-categories of the form
\begin{align*}
\mathrm{res}_U\colon\mathcal{C}_{/B}\rightarrow\mathrm{Desc}(U),
\end{align*}
where $\mathrm{Desc}(U)$ denotes the full sub-$\infty$-category of $\mathrm{Fun}(I,\mathcal{C}_{/C})_{/U}$ spanned by 
the cartesian natural transformation over $U$. It maps an object $f\colon C\rightarrow B$ to the cartesian natural 
transformation $\mathrm{res}_U(f)$ given pointwise by its associated pullbacks along the arrows
$U_i\colon sU_i\rightarrow B$. 
Whenever $\mathcal{C}$ has pullbacks, we can describe $\mathrm{res}_U$ alternatively as follows.

Given any diagram $U\colon I\rightarrow\mathcal{C}_{/B}$, we may consider the slice functor
\[(\mathcal{C}_{/B})_{/-}\colon(\mathcal{C}_{/B})^{op}\rightarrow\mathrm{Cat}_{\infty}\] 
as well as its pre-composition with $U\colon I\rightarrow\mathcal{C}_{/B}$. We obtain a canonical functor
\begin{align}\label{equresalt}
\mathrm{res}_U\colon\mathrm{lim}(\mathcal{C}_{/B})_{/-}\rightarrow{\mathrm{lim}}U^{\ast}(\mathcal{C}_{/B})_{/-}
\end{align}
between the limits. As $\mathcal{C}_{/B}$ has a terminal object given by the identity $1_B$, the limit of
$(\mathcal{C}_{/B})_{/-}$ is just the $\infty$-category $(\mathcal{C}_{/B})_{/1_B}\simeq\mathcal{C}_{/B}$. 
Furthermore, there is a natural equivalence $(\mathcal{C}_{/B})_{/-}\simeq\mathcal{C}_{/s(-)}$, and the limit
$\mathrm{lim}_{i\in I}\mathcal{C}_{/sU_i}$ is exactly the $\infty$-category 
$\mathrm{Desc}(U)$ of cartesian natural transformations over $U$ by \cite[Corollary 3.3.3.2]{luriehtt}. One shows 
that the functor $\mathrm{res}_U$ in (\ref{equresalt}) is naturally equivalent to the functor $\mathrm{res}_U$ in 
(\ref{equpredefpredescent}) by computing that both represent the same cone
$\mathcal{C}_{/B}\rightarrow U^{\ast}(\mathcal{C}_{/B})_{/-}$.

\begin{lemma}\label{lemmaresfunct}
Let $\mathcal{C}$ be an $\infty$-category with pullbacks, let $B\in\mathcal{C}$ be an object, and
$U\colon I\rightarrow\mathcal{C}_{/B}$ be a diagram.
Then every functor $\phi\colon J\rightarrow I$ of $\infty$-categories induces a commutative triangle
\begin{align}\label{diaglemmaresfunct1}
\begin{gathered}
\xymatrix{
 & \mathrm{Desc}(U)\ar[dd]^{\mathrm{res}_{\phi}}\\
\mathcal{C}_{/B}\ar[ur]^{\mathrm{res}_U}\ar[dr]_{\mathrm{res}_{U\phi}} & \\
 & \mathrm{Desc}(U\phi)
}
\end{gathered}
\end{align}
of $\infty$-categories. The functor $\mathrm{res}_{\phi}$ is an equivalence whenever $\phi$ is cofinal.
\end{lemma}
\begin{proof}
The triangle in the statement is given by
\begin{align}\label{diaglemmaresfunct2}
\begin{gathered}
\xymatrix{
 & \underset{i\in I}{\mathrm{lim}}(\mathcal{C}_{/B})_{/U_i}\ar[dd]^{\mathrm{res}_{\phi}} \\
\underset{f\in \mathcal{C}_{/B}}{\mathrm{lim}}(\mathcal{C}_{/B})_{/f}\ar[ur]^{\mathrm{res}_U}\ar[dr]_{\mathrm{res}_{U\phi}} & \\
 & \underset{j\in J}{\mathrm{lim}}(\mathcal{C}_{/B})_{/U\phi_j}
}
\end{gathered}
\end{align}
The fact that the triangle commutes is easily seen by the fact that both $\mathrm{res}_{U\phi}$ and
$\mathrm{res}_{\phi}\circ\mathrm{res}_U$ represent the same cone. Whenever $\phi$ is cofinal, the functor
$\mathrm{res}_{\phi}$ is an equivalence by Lemma~\ref{lemmacofequiv}.2 applied to the slice functor
$(\mathcal{C}_{/B})_{/-}\colon\mathcal{C}_{/B}\rightarrow\mathrm{Cat}_{\infty}^{op}$. 

\end{proof}

\bibliographystyle{amsplain}
\bibliography{BSBib}
\Address

\end{document}